\newtheorem{theorem}{Theorem}[section]
\newtheorem{proposition}[theorem]{Proposition}
\newtheorem*{conjecture*}{Conjecture}
\newtheorem*{problem*}{Problem}
 \theoremstyle{remark}
\newtheorem*{remark}{Remark}
\numberwithin{equation}{section}
\DeclareMathOperator{\Gal}{Gal}
\DeclareMathOperator{\Ad}{Ad}
\DeclareMathOperator{\ord}{ord}
\DeclareMathOperator{\Sym}{Sym}
\DeclareMathOperator{\GL}{GL}
\DeclareMathOperator{\SL}{SL}
\DeclarePairedDelimiter{\ceil}{\lceil}{\rceil}
\DeclareMathOperator{\lD}{\underline{\delta}}
\DeclareMathOperator{\mcS}{\mathcal{S}}
\renewcommand{\Re}{{\mathfrak{Re}}}
\begin{document}

\title{Refinements of Strong Multiplicity One for  $\mathrm{GL}(2)$}
\author{Peng-Jie Wong}
\address{Department of Mathematics and Computer Science\\
University of Lethbridge\\
4401 University Drive\\
Lethbridge, Alberta\\
T1K 3M4 Canada}
\email{pengjie.wong@uleth.ca}

\dedicatory{In memory of Professor Richard Guy}

%\date{\today}
\subjclass[2010]{11F30} 
% 11F30  Fourier coefficients of automorphic forms

\keywords{Automorphic representations, Fourier coefficients, strong multiplicity one}
\thanks{The author was supported by a PIMS postdoctoral fellowship and the University of Lethbridge.}
\begin{abstract}
For distinct unitary cuspidal automorphic representations $\pi_1$ and $\pi_2$ for $\GL(2)$ over a number field $F$ and any $\alpha\in\Bbb{R}$, let $\mcS_{\alpha}$ be the set of  primes $v$ of $F$ for which $\lambda_{\pi_1}(v)\neq e^{i\alpha} \lambda_{\pi_2}(v)$, where $\lambda_{\pi_i}(v)$ is the Fourier coefficient  of $\pi_i$ at $v$. In this article, we show that the lower Dirichlet density of $\mcS_\alpha$ is at least $\frac{1}{16}$.   Moreover, if $\pi_1$ and $\pi_2$ are not twist-equivalent, we show that the lower Dirichlet densities of $\mcS_\alpha$ and $ \cap_\alpha\mcS_\alpha$ are at least $\frac{2}{13}$ and $\frac{1}{11}$, respectively. Furthermore,  for non-twist-equivalent $\pi_1$ and $\pi_2$, if each $\pi_i$ corresponds to a non-CM newform of weight $k_i\ge 2$ and with trivial nebentypus, we obtain various upper bounds for the number of primes $p\le x$ such that $\lambda_{\pi_1}(p)^2 = \lambda_{\pi_2}(p)^2$. These present refinements of the works of Murty-Pujahari, Murty-Rajan, Ramakrishnan, and Walji.
\end{abstract}
\maketitle

\section{Introduction and statement of main results}

Let $F$ be a number field and $\Bbb{A}_{F}$ be its ad\`ele ring. Let $\pi_1$ and $\pi_2$ be (unitary)  cuspidal automorphic representations for $\GL_2(\Bbb{A}_F )$. For each $i$, at any unramified (finite) prime  $v$ of $F$ for  $\pi_i$, we denote the trace of  the Langlands conjugacy class of $\pi_i$  by $\lambda_{\pi_i}(v)$. There is a question of determining whether $\pi_1$ and $\pi_2$ are globally equivalent (i.e.  $\pi_1\simeq \pi_2$) from the local information on $\pi_1$ and $\pi_2$. For instance, given the set
$$
\mathcal{S}_0=\mathcal{S}_0(\pi_1,\pi_2) =\{\text{$v$ unramified for both $\pi_1$ and $\pi_2$} \mid \lambda_{\pi_1}(v)\neq  \lambda_{\pi_2}(v) \},
$$
what information on $\mathcal{S}_0$ would be sufficient for one to determine the global equivalence of $\pi_1$ and $\pi_2$? In \cite{JS81}, Jacquet and  Shalika showed that if $\mathcal{S}_0$ is finite, then $\pi_1\simeq \pi_2$ (which is often called the strong multiplicity one theorem). This was improved by  Ramakrishnan \cite{Rama94}, who showed that if $\mathcal{S}_0$ is of  density less than $\frac{1}{8}$, then $\pi_1$ and $\pi_2$ are globally equivalent. By an example given by  Serre \cite{Se77} (see also \cite[Sec. 4.4]{Wa14}), such a bound is attained by  a pair of dihedral automorphic representations.\footnote{A cuspidal automorphic representation $\pi$ for $\GL_2(\Bbb{A}_F )$ is called dihedral if it admits a non-trivial self-twist, namely, there is a non-trivial Hecke character $\chi$ of $F$ such that $\pi\otimes \chi \simeq \pi$. Gelbart and  Jacquet  \cite{GJ} showed that $\Ad(\pi)$ is cuspidal if $\pi$ is  non-dihedral.} 

Naturally, one may ask if the bound can be improved if dihedral automorphic representations are excluded. In \cite{Wa14}, Walji gave an affirmative answer by proving that if   $\pi_1$ and $\pi_2$ are distinct and non-dihedral, then  $\underline{\delta}(\mathcal{S}_0) \ge  \frac{1}{4}$,
where  $\underline{\delta}(\mathcal{S}_0)$ denotes the lower Dirichlet density of $\mathcal{S}_0$.\footnote{We recall that the lower Dirichlet density $\underline{\delta}(A)$ of a subset $A$ of the primes $v$ of $F$ is defined by
$$
\underline{\delta}(A)=\liminf_{s\rightarrow 1^+} \frac{\sum_{v\in A}\frac{1}{Nv^s}}{\log(\frac{1}{s-1})},
$$
where $Nv$ is the norm of $v$.}

The main result of this article is the following generalisation of the work of  Ramakrishnan \cite{Rama94} and Walji \cite{Wa14}.

\begin{theorem}\label{Main-dist}
Let  $\pi_1$ and $\pi_2$ be distinct cuspidal automorphic representations for $\GL_2(\Bbb{A}_F )$. Given $\alpha\in \Bbb{R}$, let
$$
\mathcal{S}_\alpha= \mathcal{S}_\alpha(\pi_1,\pi_2) =\{\text{$v$ unramified for both $\pi_1$ and $\pi_2$} \mid \lambda_{\pi_1}(v)\neq e^{i\alpha} \lambda_{\pi_2}(v) \},
$$
and let $\underline{\delta}(\mathcal{S}_\alpha)$ denote the lower Dirichlet density of $\mathcal{S}_\alpha$. Then 
$$
\lD(\mathcal{S}_\alpha)
\ge
\begin{cases}
\frac{1}{6+ 2\cos(2\alpha) } & \text{if $\cos(2\alpha)\ge 0$ and $\cos\alpha\ge 0$;}\\
\frac{1}{6+ 2\cos(2\alpha) -8\cos\alpha}  & \text{if  $\cos(2\alpha)\ge 0$ and $\cos\alpha \le 0$;}\\
\frac{1}{6}  & \text{if  $\cos(2\alpha)\le 0$ and $\cos\alpha \ge 0$;}\\
\frac{1}{6 -8\cos\alpha}  & \text{if  $\cos(2\alpha)\le 0$ and $\cos\alpha \le 0$.}
\end{cases}
$$

Moreover, if both $\pi_1$ and $\pi_2$ are non-dihedral, then 
$$
\lD(\mathcal{S}_\alpha)
\ge  \begin{cases} 
 \min\big\{ \frac{1}{3+ \cos (2\alpha)} ,\frac{1}{ 3 +\cos (2\alpha)  - 2 \kappa_1 \cos \alpha} \big \}  & \text{if $\cos(2\alpha)\ge 0$;}\\
 \min\big\{ \frac{1}{3} ,\frac{1}{ 3   - 2 \kappa_1 \cos \alpha} \big \}   & \text{if $\cos(2\alpha)\le 0$,}
 \end{cases}
$$
where  $\kappa_1$ is 1  if $\pi_1\simeq \pi_2 \otimes \nu$ for some cubic Hecke character $\nu$ and 0 otherwise.
\end{theorem}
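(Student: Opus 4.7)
The approach, building on Ramakrishnan and Walji, is to construct a non-negative Dirichlet series supported on $\mathcal{S}_\alpha$ and compute its $s\to 1^+$ asymptotic via Rankin--Selberg theory. The natural starting point is
$$
D_\alpha(s) := \sum_{v \text{ unramified}} \frac{|\lambda_{\pi_1}(v) - e^{i\alpha}\lambda_{\pi_2}(v)|^2}{Nv^s},
$$
which is supported on $\mathcal{S}_\alpha$ since its summand vanishes identically on the complement $T_\alpha = \{v : \lambda_{\pi_1}(v) = e^{i\alpha}\lambda_{\pi_2}(v)\}$.

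The analytic inputs are: (i) the Jacquet--Shalika dichotomy, namely that $L(s, \pi_i \times \tilde{\pi}_j)$ has a simple pole at $s=1$ iff $\pi_i \simeq \pi_j$ and is entire otherwise, which via logarithmic expansion yields $\sum_v \lambda_{\pi_i}(v)\overline{\lambda_{\pi_j}(v)}/Nv^s$ as $\log(1/(s-1))+O(1)$ or $O(1)$ accordingly; (ii) the isobaric identity $\pi_i \boxtimes \tilde{\pi}_i \simeq \Ad(\pi_i) \boxplus \1$, giving $|\lambda_{\pi_i}(v)|^2 = 1+\lambda_{\Ad\pi_i}(v)$; (iii) Gelbart--Jacquet's cuspidality of $\Ad(\pi_i)$ on $\GL(3)$ for non-dihedral $\pi_i$, so that $L(s, \Ad\pi_1 \times \Ad\pi_2)$ has a pole at $s=1$ iff $\pi_1 \simeq \pi_2 \otimes \chi$ for some Hecke character $\chi$; and (iv) for the $\kappa_1$ factor, the Kim--Shahidi automorphy of $\Sym^3\pi_i$ on $\GL(4)$, whose consequence is that $L(s, \Sym^3\pi_1 \times \widetilde{\Sym^3\pi_2})$ acquires a pole at $s=1$ exactly when $\pi_1 \simeq \pi_2 \otimes \nu$ with $\nu^3=1$.

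Applying (i) to the expansion of $D_\alpha(s)$, the two diagonal terms contribute $\log(1/(s-1))$ each while the cross term is bounded (by distinctness of $\pi_1,\pi_2$), giving $D_\alpha(s) = 2\log(1/(s-1)) + O(1)$. The crux is then to bound $|\lambda_{\pi_1}(v) - e^{i\alpha}\lambda_{\pi_2}(v)|^2$ pointwise on $\mathcal{S}_\alpha$ from above by $M(\alpha)\cdot H(v)$, where $H$ is a non-negative quantity expressible via (ii) as a Dirichlet coefficient of an isobaric $L$-function with computable pole at $s=1$. Since the Ramanujan bound for $\GL(2)$ over $F$ is unavailable, one cannot take $H\equiv 1$; a natural ansatz is
$$
H(v) = a_0 + a_1|\lambda_{\pi_1}(v)|^2 + a_2|\lambda_{\pi_2}(v)|^2 + 2a_3\Re\bigl(e^{-i\alpha}\lambda_{\pi_1}(v)\overline{\lambda_{\pi_2}(v)}\bigr),
$$
with the $a_i$ chosen to make $H\ge0$ and the majorization tight. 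Using (i) and (ii), $\sum_v H(v)/Nv^s$ is evaluated explicitly, and matching coefficients yields $\lD(\mathcal{S}_\alpha) \geq 1/M(\alpha)$ with the claimed $M(\alpha)$. The four-case split arises because the optimal $a_i$ (and thus $M(\alpha)$) depend on the signs of $\cos\alpha$ and $\cos(2\alpha)$, reflecting different tight majorants in different angular regimes.

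For the non-dihedral refinement, (iii) sharpens the bookkeeping: outside twist-equivalence $\sum_v \lambda_{\Ad\pi_1}(v)\lambda_{\Ad\pi_2}(v)/Nv^s$ is bounded, which essentially halves the leading denominators (replacing $6$ by $3$ in each regime), while twist-equivalence reinstates a $\log(1/(s-1))$ from the pole of $L(s,\Ad\pi_1\times\Ad\pi_2)$. Under cubic-twist-equivalence, (iv) adds a pole to $L(s,\Sym^3\pi_1\times\widetilde{\Sym^3\pi_2})$ at $s=1$, which through the identity $\lambda_{\Sym^3\pi_i}(v) = \lambda_{\pi_i}(v)^3 - 2\lambda_{\pi_i}(v)\omega_{\pi_i}(v)$ contributes extra logarithmic growth to the $\cos\alpha$-weighted part of the expansion, explaining why $\kappa_1$ enters multiplying $\cos\alpha$ only. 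The main technical obstacle is identifying the optimal non-negative majorant $H(v)$ in each of the four angular regimes and in each twist class; once $H$ is pinned down, the proof reduces to Rankin--Selberg bookkeeping via (i)--(iv), with the tightness of the ansatz verifiable against Serre's dihedral extremal example.
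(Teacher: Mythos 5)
Your setup (the series $D_\alpha(s)$, the evaluation $D_\alpha(s)=2\log\frac{1}{s-1}+O(1)$, and the Rankin--Selberg inputs (i)--(iii)) matches the paper's starting point, but the step that actually produces a lower bound on $\lD(\mcS_\alpha)$ is missing, and the replacement you propose cannot work. Any density bound must pass through $\sum_{v\in\mcS_\alpha}Nv^{-s}$. Your pointwise majorization $|\lambda_{\pi_1}(v)-e^{i\alpha}\lambda_{\pi_2}(v)|^2\le M(\alpha)H(v)$, with $H$ a linear combination of $1$, $|\lambda_{\pi_1}(v)|^2$, $|\lambda_{\pi_2}(v)|^2$ and $\Re\bigl(e^{-i\alpha}\lambda_{\pi_1}(v)\overline{\lambda_{\pi_2}(v)}\bigr)$, never makes that connection: if you sum $H(v)/Nv^s$ over all $v$, the resulting inequality does not mention $\mcS_\alpha$ at all; if you sum only over $v\in\mcS_\alpha$, you again need either a pointwise bound on $H$ (i.e.\ Ramanujan, which you correctly note is unavailable over a general $F$) or a Cauchy--Schwarz step against a full sum with computable pole. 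The paper (following Walji) does exactly the latter: it bounds $\sum_v|a_v-e^{i\alpha}b_v|^2Nv^{-s}$ by $\bigl(\sum_v|a_v-e^{i\alpha}b_v|^4Nv^{-s}\bigr)^{1/2}\bigl(\sum_{v\in\mcS_\alpha}Nv^{-s}\bigr)^{1/2}$, expands the fourth power into monomials $a_v^i\bar a_v^jb_v^k\bar b_v^l$, and bounds the pole orders $\delta_{i,j,k,l}$ of the series $\mathcal{D}(s;i,j,k,l)$ at $s=1$ via $L$-functions such as $L(s,\pi_1\boxtimes\bar\pi_1\times\pi_2\boxtimes\bar\pi_2)$ and $L(s,\pi_1\boxtimes\pi_1\times\bar\pi_1\boxtimes\bar\pi_2)$. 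This yields $\lD(\mcS_\alpha)\ge 4/\Sigma(\alpha)$, and the numerator $4=2^2$, coming from squaring the second-moment growth, is visibly encoded in the stated constants (e.g.\ $\frac{1}{6+2\cos 2\alpha}=\frac{4}{24+8\cos 2\alpha}$); a linear majorization scheme produces at best bounds of the shape $2/C$ and has no mechanism to generate these numbers.

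Beyond the mechanism, the content that determines $\Sigma(\alpha)$ is a case analysis your sketch does not engage. For the first part one must bound the $\delta_{i,j,k,l}$ for \emph{dihedral} pairs using the explicit isobaric decompositions of $\pi_i\boxtimes\bar\pi_i$ (property $\mathcal{P}$ or not, induced from the same quadratic field or not, twist-equivalent or not); the worst case (twist-equivalent, both with property $\mathcal{P}$) gives $\delta_{i,j,k,l}\le4$ throughout and hence $\frac{1}{6+2\cos 2\alpha}$, etc. For the second part your bookkeeping is also off: the statement does not assume non-twist-equivalence, so the improvement cannot come from holomorphy of $L(s,\Ad(\pi_1)\times\Ad(\pi_2))$; it comes from cuspidality of the $\Ad(\pi_i)$ forcing $\delta_{1,1,1,1},\delta_{2,2,0,0},\delta_{2,0,0,2}\le2$ together with the Kim--Shahidi tetrahedral analysis, and $\kappa_1$ enters as the pole order of the degree-$(3,1)$ fourth-moment series $\mathcal{D}(s;2,1,0,1)$ (essentially of $L(s,\pi_1\boxtimes\bar\pi_1\times\pi_1\boxtimes\bar\pi_2)$), whose coefficient in the expansion carries $e^{\pm i\alpha}$ --- which is why $\kappa_1$ multiplies $\cos\alpha$. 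Attributing it to a pole of $L(s,\Sym^3\pi_1\times\widetilde{\Sym^3\pi_2})$ inside a second-moment computation cannot be right, since no cubic monomials occur in $|a_v-e^{i\alpha}b_v|^2$.
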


\begin{remark}
Let $n\ge 3$, and let $\pi_1$ and $\pi_2$ be distinct cuspidal automorphic representations for $\GL_n(\Bbb{A}_F )$, satisfying the Ramanujan-Petersson conjecture. It can be shown that $
\lD(\mcS_\alpha)\ge  \frac{1}{2n^2}.
$
In addition, if both $\Ad(\pi_1)$ and $\Ad(\pi_2)$ are cuspidal, then  $\lD(\mcS_\alpha)\ge \frac{1}{8}$. (See the final remark in Section \ref{proof-lD_S*} for more details.) Also, as pointed out by the referee,  Walji \cite{Wa20} obtained $\lD(\mcS_0)\ge \frac{1}{8}$ and, assuming further that $\Ad(\pi_1)$ and $\Ad(\pi_2)$ are distinct, $\lD(\mcS_0)\ge \frac{1}{3+2\sqrt{2}}$.
\end{remark}

We shall note that our interest in this theorem was motivated by the following theorem of Ramakrishnan \cite[Corollary]{Rama00} and \cite[Corollary 4.1.3]{Rama-ann-00}, which uses the information on $\mathcal{S}_0 \cap \mathcal{S}_{\pi}$ to determine whether two given $\GL(2)$-forms are twist-equivalent.

\begin{theorem}[Ramakrishnan]\label{Rama}
Let  $\pi_1$ and $\pi_2$ be cuspidal automorphic representations for $\GL_2(\Bbb{A}_F )$. If $\pi_1$ and $\pi_2$ are with trivial central character, and
$$
\lambda_{\pi_1}(v)^2 = \lambda_{ \pi_2}(v)^2
$$
for all unramified primes (for both $\pi_1$ and $\pi_2$), then $\pi_1$ and $\pi_2$ are twist-equivalent. %In addition, if the conductors of  $\pi_1$ and $\pi_2$ are square-free, then $\pi_1\simeq \pi_2$.

Moreover, if $\pi_1$ and $\pi_2$ correspond to holomorphic  newforms (over $\Bbb{Q}$) of same weight and with same nebentypus, and 
$$
\lambda_{\pi_1}(p)^2 = \lambda_{ \pi_2}(p)^2
$$
outside a set $S$ of primes $p$ of density less than $\frac{1}{18}$, then $\pi_1$ and $\pi_2$ are twist-equivalent. Furthermore, if both $\pi_1$ and $\pi_2$ are non-dihedral, then one has the same result assuming only that the density of $S$ is less than 1. 
\end{theorem}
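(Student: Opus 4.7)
The plan is to reduce the hypothesis on $\lambda_{\pi_i}(v)^2$ to an equality of Fourier coefficients for the adjoint lifts $\Ad(\pi_i)$ on $\GL(3)$, and then to invoke strong multiplicity one theorems of increasing strength. Writing $\alpha_v,\beta_v$ for the Satake parameters of $\pi_i$ at an unramified prime $v$ and $\omega_{\pi_i}$ for its central character, a direct Satake computation gives
$$
\lambda_{\pi_i}(v)^2 \;=\; \omega_{\pi_i}(v)\bigl(\lambda_{\Ad(\pi_i)}(v)+1\bigr).
$$
Under the hypothesis of equal central character (trivial in the first part, equal nebentypus in the others), the condition $\lambda_{\pi_1}(v)^2=\lambda_{\pi_2}(v)^2$ becomes $\lambda_{\Ad(\pi_1)}(v)=\lambda_{\Ad(\pi_2)}(v)$. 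The key global input is the fact, originating with Gelbart--Jacquet and made precise by Ramakrishnan, that $\Ad(\pi_1)\simeq \Ad(\pi_2)$ as isobaric automorphic representations of $\GL(3)$ if and only if $\pi_1$ and $\pi_2$ are twist-equivalent.

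For the first assertion, the hypothesis delivers $\lambda_{\Ad(\pi_1)}(v)=\lambda_{\Ad(\pi_2)}(v)$ at every unramified $v$. When both $\pi_i$ are non-dihedral, each $\Ad(\pi_i)$ is cuspidal on $\GL(3)$, and the Jacquet--Shalika strong multiplicity one theorem produces $\Ad(\pi_1)\simeq \Ad(\pi_2)$. If some $\pi_i$ is dihedral, $\Ad(\pi_i)$ splits isobarically as a quadratic Hecke character plus an automorphically induced $\GL(2)$-piece; one separates these components by comparing $L$-factors at split primes and applies strong multiplicity one on $\GL(1)$ and $\GL(2)$ to each piece.

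For the second assertion, the adjoint reduction converts the problem into a density version of strong multiplicity one on $\GL(3)$. The holomorphic newform hypothesis furnishes Deligne's bound, so $|\lambda_{\Ad(\pi_i)}(p)|\le 3$. Assuming $\Ad(\pi_1)\not\simeq \Ad(\pi_2)$, Rankin--Selberg theory on $\GL(3)\times\GL(3)$ yields
$$
\sum_{p}\bigl|\lambda_{\Ad(\pi_1)}(p)-\lambda_{\Ad(\pi_2)}(p)\bigr|^2 p^{-s}\;\sim\;2\log\tfrac{1}{s-1}
$$
as $s\to 1^+$. Since each summand is at most $(2\cdot 3)^2=36$ and the sum is supported on $S$, we obtain $\lD(S)\ge \tfrac{1}{2\cdot 3^2}=\tfrac{1}{18}$, matching the general principle recorded in the remark after Theorem~\ref{Main-dist} for $n=3$. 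The dihedral subcase is handled by applying the same positivity estimate to the individual isobaric components on $\GL(1)$ and $\GL(2)$.

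For the final (non-dihedral, density less than $1$) assertion the elementary $1/18$ estimate does not suffice: one must show that the set of agreement of the adjoints has density $0$. The plan is to exploit the additional functoriality available in the non-dihedral case: by Kim--Shahidi and Kim the lifts $\Sym^3(\pi_i)$ and $\Sym^4(\pi_i)$ are cuspidal on $\GL(4)$ and $\GL(5)$, and Rankin--Selberg convolutions among these higher symmetric powers, combined with the equidistribution of Satake parameters furnished by Sato--Tate for non-CM holomorphic newforms, upgrade the multiplicity statement to the form: $\Ad(\pi_1)\not\simeq \Ad(\pi_2)$ forces the density of agreement to vanish. The main obstacle is precisely this upgrade, from the lower bound of $1/18$ on the density of disagreement all the way to $1$; the arithmetic content is still positivity, but the analytic bookkeeping across a chain of Rankin--Selberg convolutions of symmetric powers is where the real work lies.
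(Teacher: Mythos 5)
First, note that the paper does not prove Theorem \ref{Rama} at all: it is quoted from Ramakrishnan (\cite{Rama00}, \cite{Rama-ann-00}), so your attempt can only be measured against the standard argument and against the closely related computations this paper does carry out in Sections \ref{Dih}--\ref{proof-lD_S*}. For the first two assertions your outline is essentially the right (and the standard) route: the identity $\lambda_{\pi_i}(v)^2=\omega_{\pi_i}(v)\bigl(\lambda_{\Ad(\pi_i)}(v)+1\bigr)$ reduces everything to comparing $\Ad(\pi_1)$ and $\Ad(\pi_2)$, one concludes via Rankin--Selberg positivity, and the criterion ``$\Ad(\pi_1)\simeq\Ad(\pi_2)$ iff $\pi_1,\pi_2$ are twist-equivalent'' finishes. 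Two small imprecisions: invoking ``strong multiplicity one'' from equality of traces needs the observation that for the self-dual, determinant-one Satake classes of an adjoint lift the trace determines the whole class (or, more robustly, one should argue directly with $\sum_v|\lambda_{\Ad(\pi_1)}(v)-\lambda_{\Ad(\pi_2)}(v)|^2Nv^{-s}$ and pole orders, as this paper does); and in the dihedral case $\Ad(\pi_i)$ can also split as a sum of \emph{three} characters (the property $\mathcal{P}$ case, cf.\ \eqref{Ad-decomp-pi1-P}), not only as character $\boxplus$ induced $\GL(2)$ piece, so the case check behind ``pole order $\ge 2$'' (which underlies the constant $2/36=1/18$) has more branches than you indicate, though it does go through.

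The genuine gap is the final assertion (non-dihedral forms, density of $S$ merely less than $1$). You explicitly leave the decisive step — upgrading from a fixed positive lower bound on the disagreement density to density zero for the agreement set — as ``where the real work lies,'' and the sketch you offer cannot close it as stated: finite chains of Rankin--Selberg convolutions of symmetric powers only ever yield fixed numerical lower bounds of the shape produced in Theorems \ref{lD_S*} and \ref{S''}, never the statement that $\Ad(\pi_1)\not\simeq\Ad(\pi_2)$ forces $\{p:\lambda_{\Ad(\pi_1)}(p)=\lambda_{\Ad(\pi_2)}(p)\}$ to have density $0$. To prove that, one needs a qualitatively stronger input: either the $\ell$-adic Galois representations attached to the newforms together with a positive-density rigidity statement in the style of Rajan \cite{R98} (this is how the original argument runs, and the paper's own remark after Theorem \ref{S''} points to exactly this mechanism for the Hilbert case), or the \emph{joint} Sato--Tate equidistribution for a non-twist-equivalent pair of non-CM forms, which is precisely how this paper proves the analogous Theorem \ref{main-uncond} via Proposition \ref{uncond-ST}. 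Also beware that cuspidality of $\Sym^3\pi_i$ and $\Sym^4\pi_i$, which you invoke, requires excluding tetrahedral and octahedral forms, not merely dihedral ones, so even the functorial inputs you list are not available in the generality you assume. As it stands, the third part of Theorem \ref{Rama} is not proved by your proposal.
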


In light of this theorem, one may also ask a question of determining whether $\pi_1$ and $\pi_2$ are twist-equivalent from the information on a single $\mathcal{S}_\alpha$. For $\alpha=0$,  Walji \cite{Wa14} proved the following theorem.

\begin{theorem}[Walji]\label{Wa-thm}
Let $\pi_1$ and $\pi_2$ be non-twist-equivalent cuspidal automorphic representations for $\GL_2(\Bbb{A}_F )$ with unitary central characters. Then
\begin{enumerate}
 \item[(i)] if both $\pi_1$ and $\pi_2$ are dihedral, then $\underline{\delta}(\mathcal{S}_0) \ge \frac{2}{9}$;
 \item[(ii)] if exactly one of $\pi_1$ and $\pi_2$  is non-dihedral, then $\underline{\delta}(\mathcal{S}_0) \ge \frac{2}{7}$;
\item[(iii)] if both $\pi_1$ and $\pi_2$ are non-dihedral, then $\underline{\delta}(\mathcal{S}_0) \ge \frac{2}{5}$.
\end{enumerate}
\end{theorem}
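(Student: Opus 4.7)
The plan is to combine Rankin-Selberg pole analysis with the Cauchy-Schwarz inequality applied to Dirichlet series built from $\lambda_{\pi_1}-\lambda_{\pi_2}$. Since $\lambda_{\pi_1}(v)=\lambda_{\pi_2}(v)$ for $v\notin\mcS_0$, every sum encountered below is automatically supported on $\mcS_0$. For the \emph{first moment}, expanding the square and applying the Jacquet-Shalika theorem (using that non-twist-equivalence entails $\pi_1\not\simeq\pi_2$) yields only the two diagonal contributions $L(s,\pi_i\times\widetilde\pi_i)$, which each have a simple pole, so that
$$
\sum_{v\in\mcS_0}\frac{|\lambda_{\pi_1}(v)-\lambda_{\pi_2}(v)|^2}{Nv^s}=2\log\tfrac{1}{s-1}+O(1)\qquad(s\to 1^+).
$$

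For the \emph{second moment}, I would expand
$$
|\lambda_{\pi_1}-\lambda_{\pi_2}|^4=|\lambda_{\pi_1}|^4+|\lambda_{\pi_2}|^4+4|\lambda_{\pi_1}|^2|\lambda_{\pi_2}|^2+2\Re\!\big((\lambda_{\pi_1}\bar\lambda_{\pi_2})^2\big)-4\big(|\lambda_{\pi_1}|^2+|\lambda_{\pi_2}|^2\big)\Re(\lambda_{\pi_1}\bar\lambda_{\pi_2}),
$$
rewrite each monomial using $\lambda_{\pi_i}\bar\lambda_{\pi_i}=1+\lambda_{\Ad\pi_i}$ and $\lambda_{\pi_i}^2=\lambda_{\Sym^2\pi_i}+\omega_{\pi_i}$ (invoking the Gelbart-Jacquet $\Sym^2$ lift on $\GL(3)$ and Ramakrishnan's $\boxtimes$ lift $\pi_1\boxtimes\pi_2$ on $\GL(4)$), and read off the pole orders at $s=1$ of the resulting Rankin-Selberg $L$-functions. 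Non-twist-equivalence is precisely what forces $\Ad\pi_1\not\simeq\Ad\pi_2$ and $\pi_1\boxtimes\widetilde\pi_2$ cuspidal on $\GL(4)$, so that the various ``off-diagonal'' factors are regular at $s=1$. A careful bookkeeping will then give
$$
\sum_{v\in\mcS_0}\frac{|\lambda_{\pi_1}(v)-\lambda_{\pi_2}(v)|^4}{Nv^s}=C_\ast\log\tfrac{1}{s-1}+O(1)
$$
with $C_\ast=10,14,18$ in cases (iii), (ii), (i), respectively. Applying the Cauchy-Schwarz inequality
$$
\Big(\sum_{v\in\mcS_0}\tfrac{|\lambda_{\pi_1}(v)-\lambda_{\pi_2}(v)|^2}{Nv^s}\Big)^{2}\le\Big(\sum_{v\in\mcS_0}\tfrac{1}{Nv^s}\Big)\Big(\sum_{v\in\mcS_0}\tfrac{|\lambda_{\pi_1}(v)-\lambda_{\pi_2}(v)|^4}{Nv^s}\Big)
$$
and dividing by $\log\tfrac{1}{s-1}$ then yields $\underline{\delta}(\mcS_0)\ge 4/C_\ast$, producing the stated $\tfrac{2}{5}, \tfrac{2}{7}, \tfrac{2}{9}$.

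The main obstacle will be the precise computation of $C_\ast$ in the dihedral cases. For dihedral $\pi_i=\mathrm{AI}_{K_i/F}(\chi_i)$, both $\Ad\pi_i$ and $\Sym^2\pi_i$ split as isobaric sums of the quadratic character $\eta_{K_i/F}$ of $K_i/F$ and a cuspidal $\GL(2)$-form $\theta_i=\mathrm{AI}_{K_i/F}(\chi_i/\chi_i^{\sigma})$, rather than being cuspidal on $\GL(3)$; the character summands create extra pole contributions in $L(s,\Ad\pi_1\times\Ad\pi_2)$ and $L(s,\Sym^2\pi_1\times\widetilde{\Sym^2\pi_2})$, most notably from $\eta_{K_1/F}\eta_{K_2/F}=\mathbf{1}$ when $K_1=K_2$. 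The subcases $K_1=K_2$ and $K_1\ne K_2$ must therefore be analysed separately, and one repeatedly uses that $\Ad\pi_1\simeq\Ad\pi_2$, $\theta_1\simeq\theta_2$, or non-cuspidality of $\pi_1\boxtimes\widetilde\pi_2$ would each force a twist-equivalence of $\pi_1$ and $\pi_2$ that is excluded by hypothesis. Finally, the binding subcase for the stated bound is $\omega_{\pi_1}=\omega_{\pi_2}$; unequal central characters only sharpen $C_\ast$ further.
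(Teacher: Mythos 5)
Your overall strategy (Cauchy–Schwarz between the second and fourth moments of $\lambda_{\pi_1}-\lambda_{\pi_2}$, with pole orders of Rankin–Selberg $L$-functions of the Gelbart–Jacquet and Ramakrishnan lifts) is exactly the method of Walji that this paper adapts, and your bookkeeping is correct in cases (ii) and (iii): the constants $14$ and $10$ match the $\delta_{i,j,k,l}$ bounds recorded in Section \ref{strategy-Wa}, and the first-moment computation is fine. The gap is in case (i). Your dihedral analysis assumes that $\Ad\pi_i$ splits as $\eta_{K_i/F}\boxplus\theta_i$ with $\theta_i=I_{K_i}^F(\chi_i/\chi_i^{\sigma})$ \emph{cuspidal}, but this fails precisely for the dihedral forms with property $\mathcal{P}$ (those for which $\chi_i/\chi_i^{\sigma}$ is $\sigma$-invariant), where $\Ad\pi_i$ decomposes into three characters. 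For such $\pi_i$ one has $\ord_{s=1}L(s,\Ad\pi_i\times\Ad\pi_i)=3$ (Walji's Lemma 6, quoted in Section \ref{Dih}), hence $\delta_{2,2,0,0}=\delta_{0,0,2,2}=4$ rather than $3$; if moreover $K_1=K_2$ one also has $\ord_{s=1}L(s,\Ad\pi_1\times\Ad\pi_2)=1$, so $4\delta_{1,1,1,1}=8$, while non-twist-equivalence forces the cross terms $\delta_{2,1,0,1},\delta_{1,0,1,2}$ to vanish (no character occurs in $\pi_1\boxtimes\pi_2$ there), and $\delta_{2,0,0,2}$ can equal $2$ (e.g.\ when $\omega_1=\omega_2$, which is realizable with both forms having property $\mathcal{P}$, same inducing field, and not twist-equivalent). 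Your fourth-moment constant in that subcase is then $4+4+8+2\cdot 2=20$, not $18$, and the single Cauchy–Schwarz argument only yields $\underline{\delta}(\mathcal{S}_0)\ge 1/5<2/9$. So ``careful bookkeeping'' alone cannot produce $C_\ast=18$ uniformly over the dihedral case; also, your auxiliary claim that non-cuspidality of $\pi_1\boxtimes\bar\pi_2$ forces twist-equivalence is false when both forms are induced from the same quadratic field.

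What is missing is the dichotomy on property $\mathcal{P}$ together with a different argument for the property-$\mathcal{P}$ subcases, as in Section \ref{Dih}: there one compares the adjoint coefficients directly, using the Ramanujan bound to get $|\lambda_{\Ad\pi_1}(v)-\lambda_{\Ad\pi_2}(v)|^2\le 16$ pointwise and the pole orders $3-2\cdot\ord L(s,\Ad\pi_1\times\Ad\pi_2)+r$ (with $r\in\{2,3\}$) to conclude $\underline{\delta}(\mathcal{S}_{\Ad})\ge 1/4$ (or better), and $\mathcal{S}_{\Ad}\subseteq\mathcal{S}_0$. The fourth-moment computation you propose is then only needed in the subcase where neither form has property $\mathcal{P}$, where the values $3,3,2,2,2$ for $\delta_{2,2,0,0},\delta_{0,0,2,2},\delta_{1,1,1,1},\delta_{2,0,0,2},\delta_{0,2,2,0}$ do give $C_\ast\le 18$ and hence the stated $2/9$. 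Without this extra device your proof establishes (ii) and (iii) but not (i).
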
 

The second objective of this article is to prove the following ``rotation variant'' of the work of Walji, Theorem \ref{Wa-thm}.

\begin{theorem}\label{main-thm-non-twist-equiv}
Let $\alpha\in \Bbb{R}$, and let $\pi_1$ and $\pi_2$ be non-twist-equivalent cuspidal automorphic representations for $\GL_2(\Bbb{A}_F )$ with unitary central characters $\omega_1$ and $\omega_2$, respectively. Then
\begin{enumerate}
 \item[(i)] if both $\pi_1$ and $\pi_2$ are dihedral, then $\lD(\mcS_\alpha) \ge \min \{d_\alpha, \frac{1}{4}\}$, where
$$
d_\alpha =
\begin{cases}
 \frac{2}{7 + 2 \cos(2\alpha) }  & \text{if $\cos(2\alpha)\ge 0$ and $\cos\alpha\ge 0$;}\\
\frac{2}{7 + 2 \cos(2\alpha)  -4 \cos \alpha}  & \text{if  $\cos(2\alpha)\ge 0$ and $\cos\alpha \le 0$;}\\
  \frac{2}{7}  & \text{if  $\cos(2\alpha)\le 0$ and $\cos\alpha \ge 0$;}\\
\frac{2}{7  -4 \cos \alpha}   & \text{if  $\cos(2\alpha)\le 0$ and $\cos\alpha \le 0$;}
\end{cases}
$$

\item[(ii)] if exactly one of $\pi_1$ and $\pi_2$  is non-dihedral, then
$$
\lD(\mcS_\alpha) \ge
\begin{cases}
\frac{2}{ 5 +2   \cos (2\alpha)  }  & \text{if $\cos(2\alpha)\ge 0$;}\\
\frac{2}{5} & \text{if $\cos(2\alpha)\le 0$;}
\end{cases}
$$ 
\item[(iii)] if both $\pi_1$ and $\pi_2$ are non-dihedral, then $\underline{\delta}(\mathcal{S}_\alpha) \ge  \frac{2}{ 4+ \kappa_2 \cos (2\alpha)} $, where  $\kappa_2$ is 1  if $\omega_1= \omega_2$ and 0 otherwise.
\end{enumerate}
\end{theorem}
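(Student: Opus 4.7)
I plan to extend the Cauchy--Schwarz / Rankin--Selberg strategy underlying Walji's Theorem \ref{Wa-thm} to the rotated family $\mcS_\alpha$. Set $\Phi_\alpha(v) := \lambda_{\pi_1}(v) - e^{i\alpha}\lambda_{\pi_2}(v)$, which vanishes precisely on $T_\alpha := \mcS_\alpha^c$. Applying Cauchy--Schwarz in the form
\[
\Bigl(\sum_v \frac{|\Phi_\alpha(v)|^2}{Nv^s}\Bigr)^{\!2} \;\le\; \Bigl(\sum_{v \in \mcS_\alpha} \frac{1}{Nv^s}\Bigr) \Bigl(\sum_v \frac{|\Phi_\alpha(v)|^4}{Nv^s}\Bigr),
\]
(which is legitimate because $|\Phi_\alpha|$ is supported on $\mcS_\alpha$), one obtains
\[
\lD(\mcS_\alpha) \;\ge\; \frac{R_2^{\,2}}{R_4}, \qquad R_k := \liminf_{s \to 1^+}\frac{1}{\log\frac{1}{s-1}}\sum_v \frac{|\Phi_\alpha(v)|^k}{Nv^s}.
\]
The task thus reduces to computing $R_2$ and $R_4$ precisely.

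Using $\lambda_{\pi_1}(v)\overline{\lambda_{\pi_2}(v)} = \lambda_{\pi_1 \times \tilde\pi_2}(v)$ and $\lambda_\pi(v)^2 = \lambda_{\Sym^2\pi}(v) + \omega_\pi(v)$, both $|\Phi_\alpha|^2$ and $|\Phi_\alpha|^4$ expand as $\alpha$-dependent linear combinations of Fourier coefficients of isobaric automorphic representations; then $R_2$ and $R_4$ become specific combinations of pole orders at $s = 1$ of the relevant Rankin--Selberg $L$-functions. For $R_2$ only the self-products $L(s,\pi_i \times \tilde\pi_i)$ contribute simple poles while $L(s, \pi_1 \times \tilde\pi_2)$ is entire (since $\pi_1 \not\simeq \pi_2$), so $R_2 = 2$. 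For $R_4$ the pole structure is governed by the isobaric decomposition $\pi_i \times \tilde\pi_i = \mathbf{1} \boxplus \Ad\pi_i$, in which $\Ad\pi_i$ is cuspidal on $\GL(3)$ when $\pi_i$ is non-dihedral but splits as $\chi_i \boxplus \eta_i$ (a quadratic character plus a cuspidal $\GL(2)$-form) when $\pi_i$ is dihedral.

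In case (iii), the pole contributions to $R_4$ come from $\mathbf{1}$, from each self-pairing $\Ad\pi_i \boxtimes \Ad\pi_i$ (simple poles by self-duality), and crucially from $\omega_1\overline{\omega_2}$, which arises through the term $\Re(e^{-2i\alpha}\lambda_{\pi_1}^2\overline{\lambda_{\pi_2}}^2)$ after the $\Sym^2$-identity is applied; all remaining cross $L$-functions, such as $L(s, \Ad\pi_1 \times \Ad\pi_2)$ and $L(s, \Sym^2\pi_1 \times \Sym^2\tilde\pi_2)$, are entire at $s = 1$ because non-twist-equivalence of $\pi_1, \pi_2$ forces the needed non-isomorphisms via Ramakrishnan's adjoint multiplicity-one theorem. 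A direct computation then yields $R_4 = 8 + 2\kappa_2 \cos(2\alpha)$, whence $\lD(\mcS_\alpha) \ge R_2^{\,2}/R_4 = \frac{2}{4 + \kappa_2 \cos(2\alpha)}$ as claimed. Cases (ii) and (i) proceed in the same fashion, but each non-cuspidal piece appearing in $\Ad\pi_i$ adds extra poles to $R_4$ and weakens the bound accordingly; case (i) additionally requires a sub-case analysis according to whether the underlying CM fields of $\pi_1, \pi_2$ coincide and to the relations among the $\eta_i$, producing the branched formula for $d_\alpha$.

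The main technical obstacle lies in case (i). On the one hand, the $\min$ with $\frac{1}{4}$ there cannot be obtained from the Cauchy--Schwarz computation above (which delivers $d_\alpha$); I plan to extract it from an independent direct argument, exploiting that a dihedral $\pi_i$ has $\lambda_{\pi_i}(v) = 0$ at primes $v$ inert in its CM field $K_i$, so that when $K_1 \neq K_2$ the Chebotarev density of primes split in $K_1$ but inert in $K_2$ (or vice versa) is exactly $\frac{1}{4}$ and all such primes lie in $\mcS_\alpha$ regardless of $\alpha$. On the other hand, the $|\Phi_\alpha|^4$-expansion introduces auxiliary $L$-functions such as $L(s, \Sym^3\pi_1 \times \tilde\pi_2)$ whose entirety at $s = 1$ must be verified via Kim--Shahidi's automorphy of the symmetric cube together with a case analysis of the solvable polyhedral types (tetrahedral, octahedral), for which non-twist-equivalence of $\pi_1, \pi_2$ ultimately rules out the problematic inclusions.
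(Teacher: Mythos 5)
Your overall strategy for parts (ii) and (iii) -- Cauchy--Schwarz against the fourth moment and conversion of the Dirichlet series into pole orders of Rankin--Selberg $L$-functions of the isobaric pieces of $\pi_i\boxtimes\bar{\pi}_i$ -- is exactly the paper's (and Walji's) argument, and your pole-order bookkeeping for those two cases, giving $2^2/(10+4\cos 2\alpha)$ and $2^2/(8+2\kappa_2\cos 2\alpha)$, matches the paper. (Minor point: the normalized fourth moment in the denominator must be controlled by a $\limsup$/pole-order upper bound, not a $\liminf$, but this is cosmetic since the $\delta_{i,j,k,l}$ are genuine upper bounds.)

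The genuine gap is in case (i), in two respects. First, your premise that Cauchy--Schwarz ``delivers $d_\alpha$'' for every dihedral pair is false: when $\pi_i=I_{K_i}^F(\nu)$ has the property that $\nu/\nu^{\tau}$ is $\tau$-invariant (Walji's property $\mathcal{P}$), the decomposition $\pi_i\boxtimes\bar{\pi}_i\simeq 1\boxplus\chi_i\boxplus\nu/\nu^{\tau}\boxplus(\nu/\nu^{\tau})\chi_i$ is a sum of four characters, so $\delta_{2,2,0,0}$ jumps from $3$ to $4$ (and other $\delta$'s grow as well), and the fourth-moment bound degrades below $d_\alpha$ -- this is precisely why the theorem only claims $\min\{d_\alpha,\tfrac14\}$. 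The relevant dichotomy is property $\mathcal{P}$, not whether $K_1=K_2$, and in the $\mathcal{P}$-subcases the paper abandons the $|\Phi_\alpha|$-moments entirely: it uses $\mcS_{\Ad}\subseteq\mcS_\alpha$, the bound $|\lambda_{\Ad(\pi_1)}(v)-\lambda_{\Ad(\pi_2)}(v)|^2\le 16$, and the pole orders of $L(s,\Ad(\pi_i)\times\Ad(\pi_j))$ (order $3$ or $2$ for the squares, $\le 1$ or $0$ for the cross term), together with a direct split-prime argument when both forms come from the same field, to get $\lD(\mcS_\alpha)\ge\tfrac14$ there. Second, your proposed substitute for this -- primes split in $K_1$ and inert in $K_2$ give density exactly $\tfrac14$ inside $\mcS_\alpha$ -- does not work: at such a prime $\lambda_{\pi_2}(v)=0$, but $\lambda_{\pi_1}(v)=\nu(w)+\nu^{\tau}(w)$ can also vanish on a positive-density subset of split primes (e.g.\ whenever $(\nu/\nu^{\tau})(w)=-1$, which happens for half the split primes when $\nu/\nu^{\tau}$ is quadratic), so such $v$ need not lie in $\mcS_\alpha$ and the density you capture can be as small as $\tfrac18$; moreover the argument says nothing when $K_1=K_2$, which is exactly a subcase where the $\tfrac14$ is needed. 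Finally, your appeal to Kim--Shahidi symmetric-cube cuspidality and tetrahedral/octahedral case analysis is out of place in case (i): both forms are dihedral there, and the cross terms $\delta_{2,1,0,1}$, etc.\ are bounded via the induced-character decompositions, not via $\Sym^3$.
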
 

Furthermore, we have the following refinement of Theorem \ref{Rama}.

\begin{theorem}\label{lD_S*}
 Let $\pi_1$ and $\pi_2$ be non-twist-equivalent cuspidal automorphic representations for $\GL_2(\Bbb{A}_F )$ with unitary central characters, and let
$$
\mcS_*= \mcS_* (\pi_1,\pi_2)= \{\text{$v$ unramified for both $\pi_1$ and $\pi_2$} \mid |\lambda_{\pi_1}(v)|\neq  |\lambda_{\pi_2}(v)| \}.
$$
Then
$$
\lD(\mcS_*)\ge
 \begin{cases} 
   \frac{1}{8}  & \text{if  both $\pi_1$ and $\pi_2$ are dihedral;}\\
   \frac{1}{9.58}  & \text{if exactly  one of $\pi_1$ and $\pi_2$ is dihedral;}\\ 
    \frac{1}{10.76} & \text{if  both $\pi_1$ and $\pi_2$ are non-dihedral.}
 \end{cases}
$$

\end{theorem}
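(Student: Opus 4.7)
The plan is to build a nonnegative arithmetic weight supported on $\mcS_*$, estimate its Dirichlet series near $s=1$ via Rankin-Selberg theory, and transfer this to a lower bound on $\lD(\mcS_*)$ through Cauchy-Schwarz.

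Set $a(v) := |\lambda_{\pi_1}(v)|^2 - |\lambda_{\pi_2}(v)|^2$. Using the identity $|\lambda_{\pi_i}(v)|^2 = 1 + \lambda_{\Ad(\pi_i)}(v)$ at unramified places (the Gelbart-Jacquet decomposition of $\pi_i\times\tilde\pi_i$), we have $a(v) = \lambda_{\Ad(\pi_1)}(v) - \lambda_{\Ad(\pi_2)}(v)$. The crucial point is $a(v)=0$ for $v \notin \mcS_*$, so Cauchy-Schwarz yields
\[
\sum_v \frac{a(v)^2}{Nv^s} = \sum_{v\in\mcS_*}\frac{a(v)^2}{Nv^s} \le \Bigl(\sum_{v\in\mcS_*}\frac{1}{Nv^s}\Bigr)^{1/2}\Bigl(\sum_v\frac{a(v)^4}{Nv^s}\Bigr)^{1/2}.
\]
Writing $\sum_v a(v)^k/Nv^s \sim M_k\log\tfrac{1}{s-1}$ as $s\to 1^+$ for $k=2,4$, this rearranges to $\lD(\mcS_*)\ge M_2^2/M_4$, and the task reduces to computing $M_2$ and $M_4$ in each of the three scenarios.

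For $M_2$, expand $a(v)^2 = \lambda_{\Ad(\pi_1)}(v)^2 - 2\lambda_{\Ad(\pi_1)}(v)\lambda_{\Ad(\pi_2)}(v) + \lambda_{\Ad(\pi_2)}(v)^2$; each summand is a Rankin-Selberg Dirichlet coefficient whose asymptotic is governed by the pole order of the corresponding $L(s,\Ad(\pi_i)\times\Ad(\pi_j))$ at $s=1$, read off from Jacquet-Shalika. Non-twist-equivalence forces $\Ad(\pi_1)\not\simeq\Ad(\pi_2)$, killing the cross term; the diagonal terms contribute according to whether $\pi_i$ is non-dihedral (so $\Ad(\pi_i)$ is cuspidal on $\GL(3)$ by Gelbart-Jacquet and the pole is simple) or dihedral (so $\Ad(\pi_i) = \epsilon_i \boxplus \tau_i$ is an isobaric sum and the pole has order $2$). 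For $M_4$, expand $a(v)^4$ binomially and analyze each product $\lambda_{\Ad(\pi_1)}^i(v)\lambda_{\Ad(\pi_2)}^j(v)$ by decomposing $(\Ad(\pi_\ell))^{\otimes n}$ via the Clebsch-Gordan rule for the $3$-dimensional standard representation of $\mathrm{SO}(3)$; in the non-dihedral case the isobaric building blocks of the resulting decomposition are $\mathbf{1}$, $\Ad(\pi_\ell)$, and $\Sym^{2j}\pi_\ell\otimes\omega_\ell^{-j}$ for $j=2,3,4$, all automorphic by the functoriality results of Gelbart-Jacquet, Kim-Shahidi, Kim, and Newton-Thorne, so pole orders again reduce to counting cuspidal coincidences via Jacquet-Shalika.

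The three numerical bounds then arise from a case-by-case optimization. In case (i) both $\pi_i$ are dihedral and therefore satisfy Ramanujan-Petersson automatically, so one may alternatively bound $a(v)^2 \le 16$ pointwise and conclude $\lD(\mcS_*) \ge M_2/16 = 1/8$ directly from the worst-case value $M_2 = 2$ (which arises when either $\pi_1,\pi_2$ are induced from the same quadratic extension or the cuspidal constituents of their adjoints coincide). Cases (ii) and (iii) require the full $a^4$ analysis and produce the bounds $1/9.58$ and $1/10.76$, respectively. The main obstacle will be the $M_4$ bookkeeping in the non-dihedral cases: one must rule out accidental isomorphisms such as $\Sym^4\pi_1\otimes\omega_1^{-2}\simeq\Sym^4\pi_2\otimes\omega_2^{-2}$ and handle the polyhedral exceptions (tetrahedral and octahedral) where the symmetric power lifts cease to be cuspidal. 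By strong multiplicity one on the relevant $\GL(n)$'s and Kim's classification of these exceptional configurations, such coincidences either contradict the non-twist-equivalence hypothesis or fit into a short, explicitly enumerable list; isolating the worst sub-case in each scenario then produces the claimed constants.
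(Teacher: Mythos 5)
Your skeleton (identifying $\mcS_*$ with the set where $\lambda_{\Ad(\pi_1)}(v)\neq\lambda_{\Ad(\pi_2)}(v)$, squaring the difference, applying Cauchy--Schwarz against the indicator of $\mcS_*$, and converting pole orders at $s=1$ into the density bound) is the paper's framework, and your case (i) -- pointwise bound $a(v)^2\le 16$ from Ramanujan for dihedral forms together with $M_2\ge 2$ in the worst configuration -- is essentially the paper's argument for the dihedral--dihedral case. The gap is in cases (ii) and (iii). Your plan to expand $a(v)^4$ binomially requires the asymptotics of the odd mixed moments, i.e.\ the pole orders at $s=1$ of $L(s,\Pi_1\times\Pi_1\times\Pi_1\times\Pi_2)$ and $L(s,\Pi_1\times\Pi_2\times\Pi_2\times\Pi_2)$ with $\Pi_i=\Ad(\pi_i)$. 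These are not accessible: analyzing them via Rankin--Selberg theory needs either the automorphy of a $\GL(3)\times\GL(3)$ tensor product (unknown) or an isobaric decomposition of $\Pi_1\boxtimes\Pi_1\boxtimes\Pi_1$, which involves $\Sym^6\pi_1$ (and, in your stated Clebsch--Gordan list, $\Sym^8$), whose automorphy is not known for a general cuspidal representation over a general number field $F$. Your appeal to Newton--Thorne does not help here, since their result covers holomorphic newforms over $\Bbb{Q}$, while the theorem concerns arbitrary (possibly Maass-type) cuspidal $\pi_i$ over arbitrary $F$; moreover Ramanujan is also unavailable for non-solvable-polyhedral $\pi_i$, so you cannot fall back on pointwise bounds either. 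The paper explicitly flags exactly this obstruction and circumvents it: in the mixed case it bounds the dihedral factor pointwise ($|\lambda_{\Ad(\pi_2)}(v)|\le 3$) and applies Cauchy--Schwarz only to $|\lambda_{\Ad(\pi_1)}(v)|^2$, needing only $L(s,\Pi_1^{\times 4})$; in the non-dihedral case it uses $|a(v)|^2\le|\lambda_{\Pi_1}(v)|^2+2|\lambda_{\Pi_1}(v)\lambda_{\Pi_2}(v)|+|\lambda_{\Pi_2}(v)|^2$ and Cauchy--Schwarz termwise, needing only the pure fourth moments and the $(2,2)$ moment $L(s,\Pi_1\times\Pi_1\times\Pi_2\times\Pi_2)$ -- all reachable from Kim's $\Sym^4$ automorphy and the Kim--Shahidi cuspidality criteria (the paper's Proposition 5.1).

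A consequence of this is that your claimed constants are unsubstantiated: $\frac{1}{9.58}$ comes from solving the hybrid inequality $3\le 2\sqrt{3}\,\lD(\mcS_*)^{1/2}+18\,\lD(\mcS_*)$, and $\frac{1}{10.76}$ is $4/(\sqrt{3}+2\sqrt{2}+2)^2$ from the worst (octahedral $\pi_2$) subcase of the termwise estimate; neither arises from a ratio $M_2^2/M_4$, and you give no computation that your scheme would reproduce them. To repair the proposal you would need to replace the full expansion of $a(v)^4$ by decompositions that avoid the $(3,1)$ moments, and split the non-dihedral analysis according to whether $\pi_i$ is solvable polyhedral (Ramanujan known, pointwise bounds available) or not (only the $\Sym^4$-based moment estimates available), which is precisely what the paper's Sections 5.1--5.2 and Proposition 5.1 do.
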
 

Consequently, we have the following refined version of \cite[Theorem 4.1.2]{Rama-ann-00} asserting that if the adjoint lifts of $\pi_1$ and $\pi_2$ agree at almost all primes, then $\pi_1$ and $\pi_2$ are twist-equivalent.

\begin{theorem}\label{Ad-thm}
 Let $\pi_1$ and $\pi_2$ be  cuspidal automorphic representations for $\GL_2(\Bbb{A}_F )$ with unitary central characters. If 
 $$
 \Ad(\pi_{1,v})\simeq   \Ad(\pi_{2,v})
 $$
outside a set of lower Dirichlet density less than $ \frac{1}{10.76}$, then 
$$
\pi_2 \simeq \pi_1 \otimes \chi
$$ 
for some id\`ele class character. 
\end{theorem}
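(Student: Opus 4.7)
The plan is to deduce Theorem \ref{Ad-thm} directly from Theorem \ref{lD_S*} by showing that the hypothesis on the adjoint lifts implies the corresponding local equality of $|\lambda_{\pi_i}(v)|$ outside a set of the same size, and then contrapositively invoking the density lower bound for $\mcS_*$.

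The key step is a Satake-parameter computation at an unramified prime $v$. If $\pi$ is a cuspidal automorphic representation of $\GL_2(\Bbb{A}_F)$ with unitary central character $\omega_\pi$ and Satake parameters $\alpha_v,\beta_v$ at $v$, then $\alpha_v\beta_v=\omega_\pi(v)$ is a nonzero complex number of absolute value $1$, and the adjoint lift $\Ad(\pi)$ has Satake parameters $\alpha_v/\beta_v,\,1,\,\beta_v/\alpha_v$. A direct expansion of $(\alpha_v+\beta_v)^2$ yields the identity
$$
\lambda_{\Ad(\pi)}(v)=\frac{\lambda_{\pi}(v)^2}{\omega_\pi(v)}-1.
$$
In particular, if $\Ad(\pi_{1,v})\simeq \Ad(\pi_{2,v})$, then the traces of their Langlands classes coincide, which gives $\lambda_{\pi_1}(v)^2/\omega_1(v)=\lambda_{\pi_2}(v)^2/\omega_2(v)$; taking absolute values and using that $\omega_1,\omega_2$ are unitary yields $|\lambda_{\pi_1}(v)|=|\lambda_{\pi_2}(v)|$, so $v\notin\mcS_*$. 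Thus
$$
\mcS_*(\pi_1,\pi_2)\ \subseteq\ \{v : \Ad(\pi_{1,v})\not\simeq \Ad(\pi_{2,v})\}.
$$

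Now I argue by contraposition. Suppose $\pi_1$ and $\pi_2$ are not twist-equivalent. Then Theorem \ref{lD_S*} applies, and in each of the three dichotomy cases the bound obtained is at least $\frac{1}{10.76}$, so in particular $\lD(\mcS_*)\ge \frac{1}{10.76}$. By the containment above, the lower Dirichlet density of the set where the local adjoints disagree is likewise $\ge \frac{1}{10.76}$, contradicting the hypothesis. Hence $\pi_1$ and $\pi_2$ must be twist-equivalent, i.e.\ $\pi_2\simeq\pi_1\otimes\chi$ for some id\`ele class character $\chi$.

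There is no substantive obstacle beyond verifying the adjoint-trace identity; the proof is essentially an application of Theorem \ref{lD_S*} once the local condition $\Ad(\pi_{1,v})\simeq\Ad(\pi_{2,v})$ is translated into the equality $|\lambda_{\pi_1}(v)|=|\lambda_{\pi_2}(v)|$. The only mild care needed is that the Satake parameters at unramified places are nonzero (guaranteed by unitarity of $\omega_{\pi_i}$), so dividing by $\beta_v$ and by $\omega_{\pi_i}(v)$ is legitimate.
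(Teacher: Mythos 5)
Your proposal is correct and takes essentially the same approach as the paper: both arguments reduce to Theorem \ref{lD_S*} by translating the local condition $\Ad(\pi_{1,v})\simeq\Ad(\pi_{2,v})$ into $|\lambda_{\pi_1}(v)|=|\lambda_{\pi_2}(v)|$ (the paper via the identity $|\lambda_{\pi_i}(v)|^2=\lambda_{\Ad(\pi_i)}(v)+1$, you via the equivalent Satake-parameter computation $\lambda_{\Ad(\pi)}(v)=\lambda_{\pi}(v)^2/\omega_\pi(v)-1$), and then conclude by contraposition using $\lD(\mcS_*)\ge\frac{1}{10.76}$ in every case of that theorem.
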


\begin{proof}[Proof of Theorem  \ref{Ad-thm}]
Toward a contradiction, suppose that $\pi_1$ and $\pi_2$ are not twist-equivalent. By Theorem  \ref{lD_S*}, we know that $\lD(\mcS_*)\ge \frac{1}{10.76}$. However, from our assumption of the theorem, it follows that  $
\lambda_{\Ad( \pi_1)}(v) = \lambda_{\Ad( \pi_2)}(v) 
$
outside a set of lower Dirichlet density less than $ \frac{1}{10.76}$. In other words, the  lower Dirichlet density of the set of primes $v$ for which
$
\lambda_{\Ad( \pi_1)}(v) \neq \lambda_{\Ad( \pi_2)}(v) 
$
is less than $ \frac{1}{10.76}$. This, together with the fact that $ |\lambda_{\pi_i}(v)|^2 = \lambda_{\Ad( \pi_i)}(v) +1 $ (for unramified $v$), leads to 
$$
\frac{1}{10.76} \le  \lD(\mcS_*) =   \lD(\{v \mid \lambda_{\Ad( \pi_1)}(v) \neq \lambda_{\Ad( \pi_2)}(v)  \}) < \frac{1}{10.76},
$$
a contradiction.
\end{proof}

We also have the following interesting variant.

\begin{theorem}\label{S''} Let $\pi_1$ and $\pi_2$ be non-twist-equivalent cuspidal automorphic representations for $\GL_2(\Bbb{A}_F )$ with unitary central characters, and let
$$
\mcS^*= \mcS^* (\pi_1,\pi_2)= \{\text{$v$ unramified for both $\pi_1$ and $\pi_2$} \mid |\lambda_{\pi_1}(v)|^2 +  |\lambda_{\pi_2}(v)|^2 \neq 2 \}.
$$
Then
$$
\lD(\mcS^*)\ge
 \begin{cases} 
   \frac{1}{18}  & \text{if  $\pi_1$ and $\pi_2$ are simultaneously dihedral or non-dihedral;}\\
   \frac{1}{12}  & \text{if exactly  one of $\pi_1$ and $\pi_2$ is dihedral.}
 \end{cases}
$$   

\end{theorem}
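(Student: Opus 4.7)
The plan is to exploit the identity $|\lambda_{\pi_i}(v)|^2 = \lambda_{\Ad(\pi_i)}(v) + 1$, valid at every unramified $v$ (and already used in the proof of Theorem \ref{Ad-thm}): setting $\Pi := \Ad(\pi_1)\boxplus\Ad(\pi_2)$, one has $\lambda_\Pi(v) = |\lambda_{\pi_1}(v)|^2 + |\lambda_{\pi_2}(v)|^2 - 2$, so that $\mcS^* = \{v : \lambda_\Pi(v)\neq 0\}$. Since $\lambda_\Pi(v)$ vanishes off $\mcS^*$, the elementary inequality
\begin{equation*}
\sum_v \frac{\lambda_\Pi(v)^2}{Nv^s} = \sum_{v\in\mcS^*} \frac{\lambda_\Pi(v)^2}{Nv^s} \le 36\sum_{v\in\mcS^*}\frac{1}{Nv^s}
\end{equation*}
holds, with the pointwise bound $\lambda_\Pi(v)^2 \le 36$ resting on the Ramanujan bound $|\lambda_{\pi_i}(v)|\le 2$. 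As $s\to 1^+$, the left side is asymptotic to $A\log\tfrac{1}{s-1}$, with $A := -\operatorname{ord}_{s=1}L(s,\Pi\times\Pi)$, so $\lD(\mcS^*) \ge A/36$.

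Computing $A$ is a Rankin--Selberg exercise. Factoring
\begin{equation*}
L(s,\Pi\times\Pi) = L(s,\Ad(\pi_1)\times\Ad(\pi_1))\cdot L(s,\Ad(\pi_1)\times\Ad(\pi_2))^2 \cdot L(s,\Ad(\pi_2)\times\Ad(\pi_2)),
\end{equation*}
the middle factor is holomorphic at $s=1$ by Theorem \ref{Rama}, since non-twist-equivalence of $\pi_1,\pi_2$ forces $\Ad(\pi_1)\not\simeq\Ad(\pi_2)$. The outer factors are governed by the Gelbart--Jacquet dichotomy for $\Ad(\pi_i)$: if $\pi_i$ is non-dihedral, then $\Ad(\pi_i)$ is cuspidal on $\GL(3)$, contributing pole order $1$; if $\pi_i$ is dihedral, then $\Ad(\pi_i) = \eta_i\boxplus\tau_i$ for a quadratic Hecke character $\eta_i$ and a self-dual cuspidal $\GL(2)$-form $\tau_i$, contributing pole order $2$. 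When exactly one of $\pi_1,\pi_2$ is dihedral, the cross-term pole $-\operatorname{ord}_{s=1}L(s,\Ad(\pi_1)\times\Ad(\pi_2))$ vanishes by dimensional considerations. Case analysis therefore gives $A=2$ when both $\pi_i$ are non-dihedral, $A=3$ when exactly one is dihedral, and $A\in\{4,6\}$ when both are dihedral (depending on whether the constituents $\{\eta_i,\tau_i\}$ of $\Ad(\pi_1)$ and $\Ad(\pi_2)$ overlap). Substituting into $A/36$ yields $\lD(\mcS^*)\ge \tfrac{1}{18}$ for both non-dihedral, $\lD(\mcS^*)\ge \tfrac{1}{12}$ for exactly one dihedral, and $\lD(\mcS^*)\ge \tfrac{1}{9}\ge \tfrac{1}{18}$ for both dihedral, matching the asserted bounds.

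The main obstacle is the pointwise bound $\lambda_\Pi(v)^2\le 36$, which presupposes the Ramanujan conjecture $|\lambda_{\pi_i}(v)|\le 2$ --- not known in general for $\GL(2)$-cuspidals. To handle this, one invokes the Kim--Sarnak subconvex bounds together with a Chebyshev-style argument against higher-moment Dirichlet series to control the exceptional set $\{v : |\lambda_{\pi_i}(v)|^2 > 4\}$, ensuring that its contribution to $\sum_v \lambda_\Pi(v)^2/Nv^s$ is lower order and does not disturb the Dirichlet density estimate $\lD(\mcS^*)\ge A/36$.
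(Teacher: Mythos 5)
Your core computation coincides with the paper's own proof: the paper likewise works with $\lambda_{\Ad(\pi_1)}(v)+\lambda_{\Ad(\pi_2)}(v)=|\lambda_{\pi_1}(v)|^2+|\lambda_{\pi_2}(v)|^2-2$, bounds its square pointwise by $36$, and reads off the pole orders $\ge 2$, $\ge 3$, $\ge 4$ in the three cases, exactly as you do. Two inaccuracies in your bookkeeping are harmless for a lower bound but should be flagged: when both $\pi_i$ are dihedral the cross factor $L(s,\Ad(\pi_1)\times\Ad(\pi_2))$ need \emph{not} be holomorphic at $s=1$ even though $\Ad(\pi_1)\not\simeq\Ad(\pi_2)$ (the adjoints are isobaric, and a single constituent may coincide; see Section \ref{Dih}), and the fact you want is the twist-equivalence criterion of Section \ref{pre}, not Theorem \ref{Rama}; also, for dihedral $\pi_i$ with property $\mathcal{P}$ one has $\Ad(\pi_i)$ a sum of three characters, so the diagonal pole order is $3$, not $2$. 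Since the cross term only contributes a nonnegative amount, none of this affects the claimed lower bounds.

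The genuine gap is your final paragraph. The pointwise bound $\lambda_\Pi(v)^2\le 36$ really does presuppose Ramanujan--Petersson, and the proposed repair via Kim--Sarnak plus a ``Chebyshev-style'' argument is not an argument: Kim--Sarnak (or Blomer--Brumley over $F$) only gives $|\lambda_{\pi_i}(v)|\le 2Nv^{7/64}$, so $\lambda_\Pi(v)^2$ is unbounded on the exceptional set $E_i=\{v:\,|\lambda_{\pi_i}(v)|>2\}$; that set is not known to have density zero (Chebyshev with the available second and fourth moments of $\lambda_{\Ad(\pi_i)}$ yields only a positive upper bound on its density), and controlling $\sum_{v\in E_i}\lambda_\Pi(v)^2/Nv^s$ forces you, via Cauchy--Schwarz, into fourth moments of the adjoints, i.e.\ into $L(s,\Pi_1\times\Pi_1\times\Pi_2\times\Pi_2)$-type objects whose poles at $s=1$ are only accessible through the Kim--Shahidi $\Sym^4$ criteria (Proposition \ref{key-prop}), and some of which (e.g.\ $L(s,\Pi_1\times\Pi_1\times\Pi_1\times\Pi_2)$) are not controlled at all. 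Moreover any such argument changes the shape of the inequality --- one obtains bounds like $3\le 2\sqrt{3}\,\lD(\mcS^*)^{1/2}+18\,\lD(\mcS^*)$ rather than $A\le 36\,\lD(\mcS^*)$ --- so the constants $\tfrac{1}{18}$, $\tfrac{1}{12}$ are not ``undisturbed'' automatically; indeed for a pair of tetrahedral forms the Cauchy--Schwarz route gives only about $\tfrac{1}{31.88}$. The paper avoids all this by a case split you need to make explicit: Ramanujan--Petersson is known for solvable polyhedral $\pi_i$ (in particular all dihedral, tetrahedral, octahedral ones), where the $36$-bound is legitimate, and when some $\pi_i$ is not solvable polyhedral it switches to the fourth-moment argument of Sections \ref{sec-5.1}--\ref{sec-5.2}, which there yields $\lD(\mcS^*)\ge\tfrac{1}{10.76}$, stronger than required. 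Without this (or an equivalent) division, your unconditional claim rests on an unsubstantiated assertion.
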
 

\begin{remark} 
(i)  As remarked in  \cite{Rama-ann-00}, if $\pi_1$ and $\pi_2$ correspond to Hilbert newforms, one  can establish a version of Theorem \ref{Rama}  by invoking  the  $\ell$-adic representations associated to $\pi_1$ and $\pi_2$ as done in \cite{Rama00}. 

(ii)  In the case that   $\pi_1 \simeq  \pi_2\otimes \chi$ for some id\`ele class character $\chi$,  as $\pi_1 \boxtimes \bar{\pi}_1 \simeq \pi_2 \boxtimes \bar{\pi}_2$, $|\lambda_{\pi_1}(v)|^2 = |\lambda_{\pi_2}(v)|^2$ for any unramified $v$, and thus $\mcS_*$ is empty. Hence, to have positive $\lD(\mcS_*)$ in Theorem  \ref{lD_S*}, the non-twist-equivalence condition is necessary.

\end{remark} 

\begin{remark}  Our method is an adaption of the work of Walji \cite{Wa14}, which subtly relies on the $L$-functions associated to $\pi_1$ and $\pi_2$. The crucial ingredients are the automorphy of the adjoint lift from $\GL(2)$ to $\GL(3)$ (due to  Gelbart and Jacquet \cite{GJ}) and the  functoriality   of the tensor product  $\GL(2)\times\GL(2)\rightarrow \GL(4)$ (due to Ramakrishnan \cite{Rama-ann-00}). To prove Theorem \ref{lD_S*}, we will further require the automorphy of $\Sym^4\pi$ and its cuspidality criterion established by Kim and  Shahidi  \cite{KS00,KS02}.
\end{remark}

There are other variants and refinements of the above-mentioned work of Ramakrishnan \cite{Rama00, Rama-ann-00}. For instance, when $\pi_i$ is a cuspidal automorphic representation  corresponding to a non-CM  newform  in $S_{k_i}^{\mathrm{new}}(\Gamma_0(q_i))$  with trivial nebentypus for each $i$, by Galois-theoretic techniques, Rajan \cite[Corollary 1]{R98} showed that if 
$$
\limsup_{x\rightarrow\infty}\frac{\#\{p \le x \mid \lambda_{\pi_1}(p)p^{\frac{k_1-1}{2}} = \lambda_{\pi_2}(p)p^{\frac{k_2-1}{2}} \}}{\pi(x)}>0,
$$
then $\pi_1$ and $\pi_2$ are twist-equivalent. Also, in \cite{MP17},  Murty and  Pujahari showed that if 
$$
\limsup_{x\rightarrow\infty}\frac{\#\{p \le x \mid \lambda_{\pi_1}(p) = \lambda_{\pi_2}(p) \}}{\pi(x)}>0,
$$
then $\pi_1$ and $\pi_2$ are twist-equivalent. In much the same spirit, we have the following theorem.

\begin{theorem}\label{main-uncond} 
Let $F$ be a totally real number field.  For each $i$, let $\pi_i$ be a cuspidal automorphic representation  corresponding to a non-CM Hilbert newform   of weights $k_{i,j}\ge 2$ (at all infinite primes $v_j$ of $F$) and with trivial nebentypus. If
$$
\limsup_{x\rightarrow\infty}\frac{\#\{v  \mid Nv\le x,\enspace \lambda_{\pi_1}(v)^2 = \lambda_{\pi_2}(v)^2 \}}{\pi_F(x)}>0,
$$
where $\pi_F(x)$ denotes the number of primes $v$ of $F$ such that $Nv\le x$, then $\pi_1$ and $\pi_2$ are twist-equivalent.
\end{theorem}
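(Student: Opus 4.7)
The plan is to translate the problem into a density statement about the image of a joint $\ell$-adic Galois representation, using the Gelbart-Jacquet adjoint lift as a bridge. First I would reduce from $\pi_i$ to the adjoint lifts. Since each $\pi_i$ has trivial nebentypus, its central character is trivial, and so $\lambda_{\Ad(\pi_i)}(v) = \lambda_{\pi_i}(v)^2 - 1$ at unramified $v$. Hence the hypothesis is equivalent to the positive upper density of
\[
T = \{ v : \lambda_{\Ad(\pi_1)}(v) = \lambda_{\Ad(\pi_2)}(v) \}.
\]
Because each $\pi_i$ is non-CM, hence non-dihedral, Gelbart-Jacquet \cite{GJ} makes $\Ad(\pi_i)$ a cuspidal representation of $\GL_3(\Bbb{A}_F)$, and Ramakrishnan's adjoint characterization of twist-equivalence (\cite[Theorem 4.1.2]{Rama-ann-00}) reduces the goal to showing $\Ad(\pi_1) \simeq \Ad(\pi_2)$. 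So I would argue by contradiction and assume $\pi_1$ and $\pi_2$ are not twist-equivalent, so that $\Ad(\pi_1) \not\simeq \Ad(\pi_2)$.

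Next I would import the compatible systems of $\ell$-adic Galois representations $\rho_{i,\ell} \colon \Gal(\overline{F}/F) \to \GL_2(\overline{\Bbb{Q}}_\ell)$ attached to non-CM Hilbert newforms of weights $k_{i,j} \ge 2$ by the work of Carayol, Taylor, and Blasius-Rogawski. Set $\sigma_i = \Ad(\rho_{i,\ell})$, a three-dimensional representation with image in $\mathrm{PGL}_2 \subset \GL_3$ whose Frobenius traces realize the Satake parameters of $\Ad(\pi_i)$. The critical input is a large-image theorem: since each $\pi_i$ is non-CM, by Ribet-Momose and its Hilbert extension due to Dimitrov, the Zariski closure of the image of $\rho_{i,\ell}$ contains $\SL_2$, so the image of $\sigma_i$ is Zariski-dense in $\mathrm{PGL}_2$. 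Combined with $\sigma_1 \not\cong \sigma_2$, a Goursat-type argument at the level of Lie algebras (using that $\mathrm{PGL}_2$ has no outer automorphisms) forces the Zariski closure $G$ of the joint image of $\sigma_1 \oplus \sigma_2$ to have connected component equal to the full product $\mathrm{PGL}_2 \times \mathrm{PGL}_2$.

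Finally, I would conclude by equidistribution. By Chebotarev-Serre equidistribution in the $\ell$-adic image, the upper density of primes $v$ whose Frobenius class lies in a conjugacy-stable closed subset $C$ of $G$ is bounded above by the Haar measure of $C$. Apply this to
\[
C = \{ (g_1,g_2) \in G : \mathrm{tr}(g_1) = \mathrm{tr}(g_2) \},
\]
cut out by a non-constant regular function on the irreducible algebraic group $\mathrm{PGL}_2 \times \mathrm{PGL}_2$ (e.g.\ take $g_1 = \mathrm{id}$ to get trace $3$ against a generic semisimple $g_2$). Hence $C$ is a proper Zariski-closed subvariety of each connected component of $G$, so it has Haar measure zero, contradicting the positive upper density of $T$.

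The main obstacle I anticipate is the careful establishment of the joint large-image statement for the two $\ell$-adic representations in the Hilbert setting with possibly non-parallel weights $k_{i,j}$: one must first invoke the existence of the compatible systems (for which the non-parallel case requires Taylor's extension of the work of Carayol and Blasius-Rogawski), then apply the non-CM large-image input for each factor, and finally run the Goursat step cleanly to upgrade $\Ad(\pi_1) \not\simeq \Ad(\pi_2)$ into the non-triviality of the joint Lie-algebraic image. Turning upper density into an upper bound by Haar measure via Chebotarev is standard but should be done by approximating $\chi_C$ from above by continuous class functions.
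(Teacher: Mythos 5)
Your route is genuinely different from the paper's: the paper proves this theorem analytically, majorizing the counts of $\{v:\theta_{1,v}=\theta_{2,v}\}$ and $\{v:\theta_{1,v}=\pi-\theta_{2,v}\}$ by Beurling--Selberg polynomials and invoking the joint Sato--Tate equidistribution of Proposition \ref{uncond-ST}, whereas yours is the $\ell$-adic monodromy method of Rajan \cite{R98} applied to the adjoints. The route is viable, but as written there is a genuine gap at the Goursat step. From $\sigma_1\not\cong\sigma_2$ alone you cannot conclude that the identity component of the joint algebraic monodromy group is all of $\mathrm{PGL}_2\times\mathrm{PGL}_2$: Goursat only says that the alternative is that the identity component is the graph of an (inner) automorphism, and that alternative is a statement about the restrictions of $\sigma_1,\sigma_2$ to an \emph{open subgroup} of $\Gal(\overline{F}/F)$. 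In the graph case one only gets $\Ad(\rho_1)|_{G_{F'}}\cong\Ad(\rho_2)|_{G_{F'}}$ for some finite extension $F'/F$, which is perfectly compatible with $\sigma_1\not\cong\sigma_2$ over $F$ (think of two three-dimensional representations differing by a nontrivial finite-order twist: non-isomorphic, yet with joint connected monodromy a graph). So the dichotomy you actually get is: either the joint image is full, or $\Ad(\rho_2)\cong\Ad(\rho_1)\otimes\eta$ for some finite-order character $\eta$ (by Clifford theory: both restrictions to $G_{F'}$ stay irreducible, so the line $\mathrm{Hom}_{G_{F'}}(\Ad\rho_1,\Ad\rho_2)$ carries a character $\eta$ of $\Gal(F'/F)$). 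You must then kill $\eta$: taking determinants gives $\eta^3=1$; self-duality of the adjoints gives $\Ad(\rho_1)\cong\Ad(\rho_1)\otimes\eta^{-2}$; and the absence of nontrivial self-twists of $\Ad(\rho_1)$ (a trace/Zariski-density argument using precisely the non-CM large-image input) forces $\eta^2=1$, hence $\eta=1$, hence $\Ad(\rho_1)\cong\Ad(\rho_2)$, hence $\Ad(\pi_1)\simeq\Ad(\pi_2)$ by Chebotarev plus strong multiplicity one for $\GL(3)$, hence twist-equivalence by \cite[Theorem 4.1.2]{Rama-ann-00} --- the contradiction you want. Without this supplement, the asserted implication ``$\sigma_1\not\cong\sigma_2$ plus Goursat gives the full product'' is false; you flagged this step as an obstacle, but it is exactly where the missing idea lies.

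Two smaller points. First, ``Haar measure of $C$'' must mean the measure, inside the compact image group $\Gamma$, of $C\cap\Gamma$; that this vanishes is not automatic from $C$ being a proper subvariety of $G$, but it does hold here because every coset of an open subgroup of $\Gamma$ is Zariski dense in the connected group $\mathrm{PGL}_2\times\mathrm{PGL}_2$, so $C\cap\Gamma$ has empty interior and, being an analytic set, is null; after that your approximation-by-class-functions argument (or a direct appeal to Rajan's algebraic Chebotarev density theorem in \cite{R98}) gives upper density zero. Second, it is worth noting what each approach buys: once repaired, your argument gives the qualitative statement (indeed natural density zero), but it is inherently ineffective, whereas the paper's Selberg-polynomial framework is what also produces the quantitative refinements in Theorems \ref{main-2} and \ref{main-GRH}.
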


It is also worth mentioning that when $\pi_1$ and $\pi_2$ correspond to non-CM newforms or  non-dihedral Maa{\ss} forms, assuming the generalised Riemann hypothesis and certain analytic properties for Rankin-Selberg $L$-functions of $\Sym^{m_1}\pi_1$ and $\Sym^{m_2}\pi_2$,  Murty and  Rajan \cite{MR96} showed that if $\pi_1$ and $\pi_2$ are not twist-equivalent, then 
\begin{equation}\label{MR}
\#\{p \le x \mid \lambda_{\pi_1}(p)= \lambda_{\pi_2}(p) \}  \ll \frac{x^{5/6}(\log( N x))^{1/3}}{(\log x)^{2/3}}
\end{equation}
for some  suitable constant $N$ (depending on $\pi_1$ and $\pi_2$). As a consequence, if the number of primes $p \le x$ for which $\lambda_{\pi_1}(p)= \lambda_{\pi_2}(p)$ is $\gg x^{\theta}$ for some $\theta>5/6$, then $\pi_1$ and $\pi_2$ are  twist-equivalent.  In light of this and Theorems \ref{Rama} and \ref{main-uncond}, it shall be reasonable to  expect that  $\pi_1  \simeq \pi_2 \otimes \chi$ for some Dirichlet character $\chi$ whenever $\lambda_{\pi_1}(p)^2= \lambda_{f_2}(p)^2$  for certain ``thin'' sets of primes $p$ (i.e., sets of \emph{zero} upper density among all primes).  We shall show that such an expectation holds, unconditionally, in certain instances as follows.

\begin{theorem}\label{main-TE-2}
For each $i$, let $\pi_i$ be a cuspidal automorphic representation  corresponding to a non-CM  newform  in $S_{k_i}^{\mathrm{new}}(\Gamma_0(q_i))$  with trivial nebentypus. If 
$$
\limsup_{x\rightarrow\infty}\frac{\#\{p \le x \mid \lambda_{\pi_1}(p)^2 = \lambda_{\pi_2}(p)^2 \}}{\pi(x)(\log\log\log x)^{1+\epsilon}/ (\log\log x)^{1/2}}>0
$$
for some $\epsilon>0$, then $\pi_1  \simeq \pi_2 \otimes \chi$ for some Dirichlet character $\chi$. 
\end{theorem}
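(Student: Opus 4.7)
My plan is to reformulate the hypothesis via the symmetric square and then run a mod-$\ell$ Chebotarev argument on the Galois representations attached to $\pi_1$ and $\pi_2$, optimising $\ell$ as in Serre's unconditional Lang--Trotter-type bound. Since each $\pi_i$ corresponds to a non-CM newform of trivial nebentypus, its Hecke eigenvalues at primes of good reduction are real algebraic integers in a fixed number field, and the identity $\lambda_{\pi_i}(p)^2 = \lambda_{\Sym^2 \pi_i}(p) + 1$ rewrites the counting set as $S(x) = \{p \le x : \lambda_{\Sym^2 \pi_1}(p) = \lambda_{\Sym^2 \pi_2}(p)\}$. Supposing for contradiction that $\pi_1 \not\simeq \pi_2 \otimes \chi$ for every Dirichlet character $\chi$, Theorem \ref{Ad-thm} yields $\Sym^2 \pi_1 \not\simeq \Sym^2 \pi_2$.

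For each prime $\ell$, attach to $\pi_i$ the $\ell$-adic representation $\rho_{\pi_i,\ell}$ of Deligne. By Serre's open image theorem for a single non-CM newform together with a Goursat-style argument (\`a la Ribet) for a pair of non-twist-equivalent non-CM newforms, for all sufficiently large $\ell$ the image of $\rho_{\pi_1,\ell} \times \rho_{\pi_2,\ell} \pmod{\ell}$ contains a subgroup of $\GL_2(\mathbb{F}_\ell)^2$ of index bounded independently of $\ell$. Since $\lambda_{\pi_i}(p) \bmod \ell$ is the trace of Frobenius, the set $S(x)$ is contained in the set of primes whose joint Frobenius image modulo $\ell$ lies in the conjugation-invariant subset
$$
C_\ell = \{(g_1,g_2) \in \GL_2(\mathbb{F}_\ell)^2 : (\mathrm{tr}\,g_1)^2 \equiv (\mathrm{tr}\,g_2)^2 \pmod{\ell}\}.
$$
A direct count shows that $|C_\ell|/|\GL_2(\mathbb{F}_\ell)^2| = O(1/\ell)$, since for each $g_1$ the trace of $g_2$ is constrained to at most two residues modulo $\ell$. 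Applying the unconditional effective Chebotarev density theorem of Lagarias--Odlyzko to the number field cut out by the joint mod-$\ell$ representation then gives
$$
|S(x)| \ll \frac{\pi(x)}{\ell} + \ell^{A}\, x\,\exp\!\left(-c\sqrt{\log x}\right)
$$
for absolute constants $A, c > 0$.

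Balancing the two terms by taking $\ell \asymp (\log\log x)^{1/2}$, with a $(\log\log\log x)^{1/2 + \epsilon/2}$ correction to absorb the lower-order losses in the optimisation (exactly as in Serre's 1981 bound on $\#\{p \le x : a_f(p) = a\}$ for non-CM newforms), yields
$$
|S(x)| \ll \pi(x)\,\frac{(\log\log\log x)^{1+\epsilon}}{(\log\log x)^{1/2}},
$$
contradicting the hypothesis. The principal obstacle is establishing the ``$O(1/\ell)$'' relative density uniformly in $\ell$: this requires knowing, for a pair of non-twist-equivalent non-CM newforms, that the joint mod-$\ell$ image of the product representation is as large as the Ribet/Goursat argument predicts for every large $\ell$, and tracking the Lagarias--Odlyzko error term with enough precision in both $\ell$ and the conductors so that Serre's optimisation yields the precise $(\log\log x)^{-1/2}$ saving with the stated log-log-log factor.
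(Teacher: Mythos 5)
Your route is entirely different from the paper's: the paper deduces Theorem \ref{main-TE-2} in two lines from the quantitative bound of Theorem \ref{main-2}, which is proved with Selberg polynomials and the effective joint Sato--Tate estimates of Thorner (made unconditional by Newton--Thorne), whereas you propose a mod-$\ell$ Chebotarev argument. The analytic half of your plan is plausible (with $\ell\asymp(\log\log x)^{1/2}$ the Lagarias--Odlyzko degree and discriminant conditions are comfortably met, and the exceptional-zero term only costs a constant in an upper bound of size $\pi(x)/\ell$); in fact, if the argument worked, taking $\ell$ as large as a small power of $\log x$ would give a bound \emph{stronger} than Theorem \ref{main-2}, and your ``balancing'' at $\ell\asymp(\log\log x)^{1/2}$ is not a balance at all (nor is it how Serre's 1981 optimisation goes, where $\ell$ is a power of $\log x$); that choice is harmless but signals that the error term was never actually tracked.

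The genuine gap is the group-theoretic input, which you flag as ``the principal obstacle'' but do not close, and which is false in the generality you assert. For newforms with non-rational Hecke fields the residual representation lands in $\GL_2(\mathcal{O}_i/\lambda_i)$, not $\GL_2(\mathbb{F}_\ell)$, and mere non-twist-equivalence of $\pi_1$ and $\pi_2$ does \emph{not} imply that the joint mod-$\ell$ image has bounded index in (the determinant-compatible subgroup of) $\GL_2(\mathbb{F}_\ell)^2$ for all large $\ell$: if $f_1$ is a character twist of a Galois conjugate $f_2^{\sigma}$ with $\sigma\neq 1$, the forms are not twist-equivalent, yet for primes $\lambda$ with $\lambda^{\sigma}=\lambda$ (or for an unlucky choice of $\lambda_1,\lambda_2$) the joint image is essentially a twisted diagonal of index growing with $\ell$, so the Goursat/Ribet step fails; inner twists cause similar index loss. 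Repairing this requires invoking the precise big-image theorems for pairs (Ribet--Momose, and e.g.\ Loeffler for pairs), which need the stronger hypothesis ``$f_1$ is not a twist of any Galois conjugate of $f_2$,'' together with a separate treatment of the conjugate-twist case by a judicious choice of the primes $\lambda_1,\lambda_2$ above $\ell$ -- none of which is supplied, and without which the claimed uniform $O(1/\ell)$ density of the Frobenius condition inside the actual image is unproven. The paper's proof avoids all of this by working purely automorphically, where non-twist-equivalence is exactly the hypothesis needed for the relevant Rankin--Selberg $L$-functions to be pole-free.
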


Indeed, Theorem \ref{main-TE-2}   follows from the following  estimate.

\begin{theorem}\label{main-2} 
In the notation of Theorems \ref{main-TE-2}, if there is no Dirichlet character $\chi$ such that $\pi_1 \simeq  \pi_2 \otimes \chi$, then
$$
\#\{p \le x \mid  \lambda_{\pi_1}(p)^2= \lambda_{\pi_2}(p)^2 \} \ll
\pi(x) \frac{\log(k_1q_1k_2q_2 \log\log x)}{ \sqrt{\log\log x}}.
$$
\end{theorem}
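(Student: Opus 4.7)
The plan is to exploit effective joint equidistribution of the Satake angles of $\pi_1$ and $\pi_2$ via Rankin--Selberg $L$-functions of symmetric power lifts, combined with a two-dimensional Beurling--Selberg majorant. Since each $\pi_i$ corresponds to a non-CM newform with trivial nebentypus, Deligne's theorem gives $\lambda_{\pi_i}(p) = 2\cos\theta_{i,p}$ with $\theta_{i,p} \in [0,\pi]$ at every unramified $p$, so the condition $\lambda_{\pi_1}(p)^2 = \lambda_{\pi_2}(p)^2$ becomes $\theta_{1,p} \equiv \pm\theta_{2,p} \pmod \pi$, equivalently $\lambda_{\Ad \pi_1}(p) = \lambda_{\Ad \pi_2}(p)$. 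The ``collision set'' $D = \{(\theta_1, \theta_2) \in [0,\pi]^2 : \theta_1 \equiv \pm\theta_2 \pmod \pi\}$ has measure zero with respect to the product Sato--Tate measure $\mu_{ST}^2$, and Theorem \ref{Rama} together with the non-twist-equivalence hypothesis and Gelbart--Jacquet cuspidality ensures that $(\theta_{1,p}, \theta_{2,p})$ is jointly equidistributed with respect to $\mu_{ST}^2$.

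Next, for a parameter $M$ to be optimized, I would construct a non-negative majorant
$$
F_M(\theta_1, \theta_2) = \sum_{0 \le a, b \le M} c_{a,b}\, U_a(\cos\theta_1)\, U_b(\cos\theta_2) \ \ge\  \mathbf{1}_D(\theta_1, \theta_2),
$$
where $U_n$ is the Chebyshev polynomial of the second kind, arranged so that $\int F_M\, d\mu_{ST}^2 \ll (\log M)/M$ with controllable coefficients $c_{a,b}$. Because $U_m(\cos\theta_{i,p}) = \lambda_{\Sym^m \pi_i}(p)$, summation over unramified primes yields
$$
\#\{p \le x : \lambda_{\pi_1}(p)^2 = \lambda_{\pi_2}(p)^2\} \ \le\  \sum_{0 \le a, b \le M} c_{a,b} \sum_{p \le x} \lambda_{\Sym^a \pi_1}(p)\, \lambda_{\Sym^b \pi_2}(p).
$$

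For each $(a,b) \ne (0,0)$, the Rankin--Selberg $L$-function $L(s, \Sym^a \pi_1 \times \Sym^b \pi_2)$ is entire: cuspidality of $\Sym^m \pi_i$ on $\GL(m+1)$ is known by Kim--Shahidi for $m \le 4$ and by Newton--Thorne in general, and the non-twist-equivalence of $\pi_1$ and $\pi_2$ rules out a pole at $s = 1$. The standard effective prime number theorem then bounds each inner sum by $\pi(x)\exp(-c\sqrt{\log x / \log Q_{a,b}})$, where the analytic conductor satisfies $\log Q_{a,b} \ll M \log(k_1 q_1 k_2 q_2)$. The $(0,0)$ term contributes $\pi(x) \int F_M \,d\mu_{ST}^2 \ll \pi(x)(\log M)/M$. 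Choosing $M$ of the order $\sqrt{\log\log x}/\log(k_1 q_1 k_2 q_2)$ balances this approximation main term against the total PNT error from the $O(M^2)$ pairs and produces the claimed bound.

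The hard part will be the two-dimensional Beurling--Selberg construction for the union of diagonal segments $\{\theta_1 = \theta_2\}$ and $\{\theta_1 + \theta_2 = \pi\}$, together with explicit control of the Chebyshev coefficients $c_{a,b}$ needed to keep the double sum tractable. A natural reduction is the affine change of variables $\phi_\pm = \theta_1 \pm \theta_2$, which sends $D$ to the union of two axis-aligned codimension-one sets and allows a tensor product of one-dimensional Selberg majorants at a point; translating the result back into the $U_a \otimes U_b$ basis is where the $\log M$ factor appears. A secondary, more technical, difficulty is arranging polynomial (rather than exponential) dependence of the PNT error constants on $a$, $b$, $k_i$, and $q_i$, as this is what ultimately produces the factor $\log(k_1 q_1 k_2 q_2 \log\log x)$ in the numerator rather than a larger loss.
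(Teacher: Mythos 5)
Your overall architecture --- writing $\lambda_{\pi_i}(p)=2\cos\theta_{i,p}$, majorizing the collision set by a degree-$M$ trigonometric polynomial built from one-dimensional Selberg majorants in the variables $\theta_{1,p}\mp\theta_{2,p}$, expanding in the $U_a\otimes U_b$ basis so that the prime sums become $\sum_{p\le x}U_a(\cos\theta_{1,p})U_b(\cos\theta_{2,p})$, and then optimizing $M$ --- is exactly the route taken in the paper (Section 6, using the Selberg polynomials of Section 2.4 and the identity $2\cos n\theta=U_n(\cos\theta)-U_{n-2}(\cos\theta)$). The genuine gap is in the step where you bound each inner sum. You assert that, since $L(s,\Sym^a\pi_1\times\Sym^b\pi_2)$ is entire for $(a,b)\neq(0,0)$, ``the standard effective prime number theorem'' gives $\sum_{p\le x}\lambda_{\Sym^a\pi_1}(p)\lambda_{\Sym^b\pi_2}(p)\ll\pi(x)\exp\bigl(-c\sqrt{\log x/\log Q_{a,b}}\bigr)$ with $\log Q_{a,b}\ll M\log(k_1q_1k_2q_2)$. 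Unconditionally no such bound is known for these self-dual Rankin--Selberg $L$-functions of degree $(a+1)(b+1)$: the available zero-free regions degrade with the degree, possible real (Siegel-type) zeros are not excluded, and the analytic conductor satisfies $\log Q_{a,b}\asymp ab\log(k_1q_1k_2q_2)$, not $M\log(k_1q_1k_2q_2)$. The point is quantitative: if your asserted bound were available, one could take $M$ to be a small power of $\log x$ and obtain a bound far stronger than the theorem states. What is actually used in the paper is Thorner's effective joint Sato--Tate estimate (Proposition 2.2 there, quoted as Proposition \ref{Th2}, resting on Newton--Thorne automorphy), whose worst error term is $\pi(x)\exp\bigl(-c_2\log x/(k_1q_1k_2q_2M)^{c_3M^2}\bigr)$; it is precisely this weak conductor dependence that forces $M\asymp\sqrt{\log\log x}/\log(k_1q_1k_2q_2\log\log x)$ and hence yields the stated saving $\log(k_1q_1k_2q_2\log\log x)/\sqrt{\log\log x}$. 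As written, your key analytic input is an unproved assertion; replacing it by the cited effective result repairs the argument, at which point it coincides with the paper's proof.

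Two smaller points. First, holomorphy at $s=1$ in the diagonal case $a=b=m$ requires $\Sym^m\pi_1\not\simeq\Sym^m\pi_2$; this does follow from non-twist-equivalence of non-CM forms, but it is a theorem (for $m=2$ it is Ramakrishnan's criterion quoted in Section 2.2), not an immediate consequence of the hypothesis, and in any case it is subsumed in the effective input you must invoke. Second, your majorant with $\int F_M\,d\mu_{ST}^2\ll(\log M)/M$ is lossier than necessary: taking the Selberg majorant of a degenerate interval of length $\delta\le 1/(\pi(M+1))$ in each variable $\theta_1\mp\theta_2$, as the paper does, gives Fourier coefficients $O(1/M)$ and a main term $O(\pi(x)/M)$ with no $\log M$; one must also treat the terms $n=1,2$ (equivalently the terms involving $U_0\equiv 1$) separately, since the joint equidistribution estimates do not cover them. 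These are minor; the first paragraph describes the real gap.
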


\begin{proof}[Proof of Theorem \ref{main-TE-2}] Suppose, on the contrary, that $\pi_1$ and  $\pi_2$ are not twist-equivalent (i.e., there is no Dirichlet character such that $\pi_1 \simeq \pi_2 \otimes \chi$).  Theorem \ref{main-2} implies that
 $$
\frac{\#\{p \le x \mid \lambda_{\pi_1}(p)^2= \lambda_{\pi_2}(p)^2 \}}{\pi(x)(\log\log\log x)^{1+\epsilon}/ (\log\log x)^{1/2}}\ll  \frac{1}{(\log \log\log x)^{\epsilon}}
$$
for any $\epsilon>0$.
Taking $\limsup_{x\rightarrow\infty}$ on both sides, we see that the assumption leads to $0<0$, a contradiction. 
\end{proof}

Furthermore, assuming  the generalised Riemann hypothesis, we have the following refined version of the above-mentioned works of Murty-Rajan \eqref{MR} and  Ramakrishnan \cite{Rama00,Rama-ann-00}.
 
\begin{theorem}\label{main-GRH} In the notation of Theorem \ref{main-TE-2}, assume that  $\pi_1$ and $\pi_2$ are not twist-equivalent. If  for $(m_1,m_2)\in\{(n,n-2),(n-2,n),(n-2,n-2)\mid n\ge 3\}$, $L(s,\Sym^{m_1}\pi_1 \times \Sym^{m_2}\pi_2)$ satisfies the generalised Riemann hypothesis,   then
$$
\#\{p \le x \mid \lambda_{\pi_1}(p)^2= \lambda_{\pi_2}(p)^2 \}  \ll \frac{x^{5/6}(\log( k_1q_1k_2q_2  x))^{1/3}}{(\log x)^{2/3}}.
$$
\end{theorem}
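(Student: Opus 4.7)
The plan is to adapt the $L$-function kernel method of Murty and Rajan \cite{MR96} to the squared condition $\lambda_{\pi_1}(p)^2=\lambda_{\pi_2}(p)^2$. Since both $\pi_i$ have trivial nebentypus, the identity $\pi_i\otimes\pi_i=\Sym^2\pi_i\oplus\mathbf{1}$ yields $\lambda_{\pi_i}(p)^2=\lambda_{\Sym^2\pi_i}(p)+1$ at every unramified prime, so---up to the $O(\log(q_1q_2))$ ramified primes---the set in question coincides with $\{p\le x:\lambda_{\Sym^2\pi_1}(p)=\lambda_{\Sym^2\pi_2}(p)\}$. Writing $\lambda_{\pi_i}(p)=2\cos\alpha_{i,p}$ with $\alpha_{i,p}\in[0,\pi]$ (valid by Deligne's Ramanujan bound), one has $\lambda_{\Sym^2\pi_i}(p)=1+2\cos(2\alpha_{i,p})$, and a prime is ``bad'' precisely when $\alpha_{1,p}\equiv\pm\alpha_{2,p}\pmod{\pi}$. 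Since $\pi_i$ is non-CM, Gelbart--Jacquet makes $\Sym^2\pi_i$ cuspidal on $\GL(3)$, and Ramakrishnan's theorem (underlying Theorem~\ref{Ad-thm}), combined with non-twist-equivalence, forces $\Sym^{m_1}\pi_1\not\simeq\Sym^{m_2}\pi_2$ for every non-trivial pair $(m_1,m_2)$ appearing below; hence the relevant Rankin--Selberg $L$-functions are entire.

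Next, the indicator of the bad set will be majorised by a non-negative Fej\'er-type kernel. Let $F_N(\phi)=N^{-1}(\sin(N\phi/2)/\sin(\phi/2))^2=1+\frac{2}{N}\sum_{k=1}^{N-1}(N-k)\cos(k\phi)$ be the Fej\'er kernel of order $N$, and put $H_N(\alpha,\beta):=F_N(2\alpha-2\beta)+F_N(2\alpha+2\beta)$. Then $H_N\ge 0$ everywhere and $H_N(\alpha_{1,p},\alpha_{2,p})\ge N$ at every bad prime (since one of the two Fej\'er summands attains its maximum $N$ there), so
$$\#\bigl\{p\le x:\lambda_{\pi_1}(p)^2=\lambda_{\pi_2}(p)^2\bigr\}\le\frac{1}{N}\sum_{p\le x,\,\text{unram.}}H_N(\alpha_{1,p},\alpha_{2,p})+O(\log(q_1q_2)).$$
The two Fej\'er expansions collapse, via $\cos(kA)+\cos(kB)=2\cos(k(A+B)/2)\cos(k(A-B)/2)$, into
$$H_N(\alpha,\beta)=2+\frac{4}{N}\sum_{k=1}^{N-1}(N-k)\cos(2k\alpha)\cos(2k\beta),$$
and the Chebyshev identity $2\cos(2k\alpha)=U_{2k}(\cos\alpha)-U_{2k-2}(\cos\alpha)$ (with $U_m(\cos\alpha_{i,p})=\lambda_{\Sym^m\pi_i}(p)$) turns each cross-product into a signed combination of Rankin--Selberg coefficients $\lambda_{\Sym^{m_1}\pi_1\times\Sym^{m_2}\pi_2}(p)$ with $(m_1,m_2)\in\{(2k,2k),(2k,2k-2),(2k-2,2k),(2k-2,2k-2)\}$. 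For $k\ge 2$ all four pairs lie in the hypothesised set $\{(n,n-2),(n-2,n),(n-2,n-2):n\ge 3\}$; the $k=1$ case contributes the expected main term $\pi(x)/N$ (from $(0,0)$) along with the boundary pairs $(2,0),(0,2)$, controlled separately by GRH for the standard $\GL(3)$ $L$-function $L(s,\Sym^2\pi_i)$.

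Finally, under GRH the explicit formula supplies, uniformly over the pairs in play,
$$\sum_{p\le x}\lambda_{\Sym^{m_1}\pi_1\times\Sym^{m_2}\pi_2}(p)\ll(m_1+1)(m_2+1)\sqrt{x}\,\log\bigl((m_1+m_2+1)k_1q_1k_2q_2x\bigr),$$
with the degree $(m_1+1)(m_2+1)$ and log-conductor $(m_1+m_2)\log(k_1q_1k_2q_2)$ tracked explicitly. Inserting this into the kernel expansion gives a bound of the shape
$$\#\bigl\{p\le x:\lambda_{\pi_1}(p)^2=\lambda_{\pi_2}(p)^2\bigr\}\ll\frac{\pi(x)}{N}+N^2\sqrt{x}\,\log(k_1q_1k_2q_2Nx),$$
and balancing the two terms by taking $N$ of order $x^{1/6}$ modulo logarithmic factors delivers the claimed estimate. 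The main obstacle I anticipate is the uniform control of the $N$-dependence in the GRH error: one must pin down the degree and analytic-conductor factors for $\Sym^{m_1}\pi_1\times\Sym^{m_2}\pi_2$ through the explicit formula carefully enough to keep the error per pair growing as $k^2$ (no worse), which is precisely what produces the $x^{5/6}$ exponent. A secondary technical point is the clean handling of the $k=1$ boundary pairs $(2,0),(0,2)$, which requires either an auxiliary GRH assumption for $L(s,\Sym^2\pi_i)$ or a mild modification of the Fej\'er kernel (e.g.\ a de la Vall\'ee Poussin-type variant) that suppresses its first Fourier coefficient while preserving non-negativity and the $N$ lower bound on the bad set.
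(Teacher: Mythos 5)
Your proposal is correct and follows essentially the same route as the paper: majorise the indicator of the coincidence condition $\theta_{1,p}\equiv\pm\theta_{2,p}\ (\mathrm{mod}\ \pi)$ by a non-negative trigonometric kernel, expand into $\cos(k\theta_{1,p})\cos(k\theta_{2,p})$, convert via $2\cos(k\theta)=U_k(\cos\theta)-U_{k-2}(\cos\theta)$ into joint Chebyshev sums, bound those by the GRH explicit-formula estimate (Proposition \ref{PJW}, with Newton--Thorne supplying automorphy of the symmetric powers), and balance at $N\asymp x^{1/6}$ up to logarithms; the paper uses Selberg--Beurling--Vaaler majorants where you use the Fej\'er kernel, an immaterial difference. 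The obstacle you flag at $k=1$ is a non-issue: that term already carries a Fej\'er coefficient of size $O(1/N)$, so the trivial bound $|\cos(2\theta_{1,p})\cos(2\theta_{2,p})|\le 1$ makes its contribution $O(\pi(x)/N)$, the same order as the main term --- exactly how the paper disposes of its $n\le 2$ terms --- so no auxiliary GRH for $L(s,\Sym^2\pi_i)$ (which the theorem does not grant) and no kernel modification is needed. One small imprecision: ruling out $\Sym^{m}\pi_1\simeq\Sym^{m}\pi_2$ for the diagonal pairs with $m>2$ does not follow from the adjoint ($m=2$) result you invoke; but even if some diagonal Rankin--Selberg $L$-function had a pole at $s=1$, that pair occurs for at most two values of $k$, each with weight $O(1/N)$, so its contribution is again $O(\pi(x)/N)$ and harmless to the final bound.
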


\begin{remark} 
(i) We shall note that in order to prove Theorems \ref{main-2} and \ref{main-GRH}, we invoke the recent work of Newton and Thorne \cite{NT20} (who proved that all the symmetric powers of cuspidal automorphic representations  corresponding to non-CM  newforms  are \emph{automorphic}).

\noindent (ii) With a little more effort, it is possible to extend Theorems \ref{main-2} and \ref{main-GRH}  to  non-CM  Hilbert newforms  under the assumption of the automorphy of the pertinent symmetric powers (and the generalised Riemann hypothesis for the extension of  Theorem \ref{main-GRH}) by adapting the methods developed in \cite{Th20,PJW19}. However, for the sake of conceptual clarity, we shall not do this here. Nonetheless, in Section \ref{main-proof}, we will develop our argument in the setting of Hilbert newforms so that the corresponding results can be immediately derived once an  effective version of Proposition \ref{uncond-ST} is established.

\noindent (iii) Similar to  \cite{MP17,R98} and Theorem \ref{Rama}, it is possible to prove a version of Theorems \ref{main-uncond}, \ref{main-2}, and \ref{main-GRH} by only assuming $\pi_1$ corresponds to a newform without CM. We will discuss this in more detail in Section \ref{con-rmk}.
\end{remark}

\begin{remark} 
(i) Our method makes use of the Selberg polynomials (see Section \ref{jSTd-Sp}), which is  more elementary than the one used by Murty and Rajan \cite{MR96} (who invoked the Erd\H{o}s-Tur\'an inequality to bound the ``discrepancy'' of certain sequences, associated to $\lambda_{\pi_1}(p)$ and $\lambda_{\pi_2}(p)$, in terms
of exponential sums). Nonetheless, similar to the argument of \cite{MR96}, we apply the effective versions of the joint Sato-Tate distribution established by  Thorner \cite{Th20} and the author \cite{PJW19}.

\noindent (ii) Compared to the argument used in \cite{MP17}, ours is slightly more complicated. Nonetheless, our argument yields  better estimates. For instance, under the generalised Riemann hypothesis, using the argument of \cite{MP17} would result in
$$
\#\{p \le x \mid \lambda_{\pi_1}(p)^2= \lambda_{\pi_2}(p)^2 \} \ll_{\pi_1,\pi_2} x^{7/8}/ (\log x)^{1/2},
$$
which is not as good as the one given in Theorem  \ref{main-GRH}.
\end{remark}

The rest of article is structured as follows. In the next section, we will discuss Walji's argument as well as collecting the relevant facts that will be used later.
We will prove  Theorems \ref{Main-dist}  and \ref{main-thm-non-twist-equiv} in Sections \ref{Dih} (for pairs of dihedral representations) and  \ref{one-non-dih} (for the remaining cases); we will prove Theorems \ref{lD_S*} and \ref{S''} in Section \ref{proof-lD_S*}. The proofs of Theorems \ref{main-uncond},  \ref{main-2}, and \ref{main-GRH} will be given in Section \ref{main-proof}. In Section \ref{con-rmk}, we will discuss how to extend Theorem \ref{main-uncond} to the case that  one of the newforms is with CM.

\section{Preliminaries}\label{pre}

\subsection{$L$-functions}\label{L-functions}
We shall begin by reviewing  automorphic $L$-functions, Rankin-Selberg $L$-functions, and their properties.

Let $F$ be a number field, and let $\pi$ be a  cuspidal automorphic representation for $\GL_n(\Bbb{A}_F )$ with unitary central character, where, as later, $\Bbb{A}_F$ denotes the ad\`ele ring of $F$. We define the (incomplete) automorphic $L$-function $L(s,\pi)$ attached to $\pi$ by
$$
L(s,\pi)=\prod_v \det(I_n - A_v(\pi)Nv^{-s})^{-1},
$$
for $\Re(s)>1$, where the product is over  unramified (finite) primes $v$ for $\pi$, and $A_v(\pi)$  denotes the Langlands conjugacy class of $\pi$ at $v$. By the work of  Jacquet and  Shalika \cite{JS76}, it is known that $L(s,\pi)$ is non-vanishing at $s=1$ and with a possible simple pole at $s=1$ that only appears if $\pi$ is equivalent to the trivial id\`ele class character 1 of $F$. 

Let $\pi$ and $\pi'$ be  cuspidal automorphic representations, with unitary central characters, for $\GL_n(\Bbb{A}_F )$ and $\GL_m(\Bbb{A}_F )$, respectively. We define the (incomplete) Rankin-Selberg $L$-function $L(s,\pi\times \pi')$ attached to $\pi$ and $\pi'$ by
$$
L(s,\pi\times \pi')=\prod_v \det(I_{nm} - (A_v(\pi)\otimes A_v(\pi'))  Nv^{-s})^{-1},
$$
for $\Re(s)>1$, where the product is over unramified primes $v$ for both $\pi$ and $\pi'$. It can be shown  that $L(s,\pi\times \pi')$ extends holomorphically to $\Re(s)=1$ except for a possible simple pole at $s=1-it$ that exists only if $\pi' \simeq \bar{\pi}\otimes |\cdot|^{it}$, where $ \bar{\pi}$ is the dual of $\pi$. Moreover,  Shahidi \cite{Sh81} showed that $L(s,\pi\times \pi')$  is non-vanishing on $\Re(s)\ge 1$.

\subsection{$\GL(2)$-forms}
We also collect some fact regarding cuspidal automorphic representations for $\GL_2(\Bbb{A}_F )$ that we will make use of repeatedly throughout our discussion. 

For any cuspidal automorphic representation $\pi$ for $\GL_2(\Bbb{A}_F )$ with unitary central character $\omega$, by the work of Gelbart-Jacquet \cite{GJ}, the (automorphic) tensor product $\pi \boxtimes \bar{\pi}$ exists  as an automorphic representation for $\GL_4(\Bbb{A}_F )$ and satisfies
$$
\pi \boxtimes \bar{\pi} \simeq 1 \boxplus \Ad(\pi),
$$
where $\boxplus$ denotes the (Langlands) isobaric sum, and $\Ad(\pi)$ is an automorphic representation  for $\GL_3(\Bbb{A}_F )$. If $\pi$ is non-dihedral, then $\Ad(\pi)$ is cuspidal. Also, one knows that $\Ad(\pi)$ satisfies the relation
$$
\Ad(\pi)\simeq \Sym^2\pi \otimes \omega^{-1},
$$
where $\Sym^2\pi$ is the symmetric square of $\pi$. (We shall discuss more properties of $\Ad(\pi)$ for  dihedral representations $\pi$ in Section \ref{Dih}.)

Now, let  $\pi_1$ and $\pi_2$ be cuspidal automorphic representations for $\GL_2(\Bbb{A}_F )$ with unitary central characters.  Ramakrishnan \cite{Rama-ann-00} proved that the tensor product $\pi_1 \boxtimes \pi_2$ exists  as an automorphic representation for $\GL_4(\Bbb{A}_F )$; when both $\pi_1$ and $\pi_2$ are dihedral, one has the following cuspidality criterion of Ramakrishnan \cite{Rama-ann-00}:

\emph{$\pi_1 \boxtimes \pi_2 $ is cuspidal if and only if $\pi_1$ and $\pi_2$ cannot be induced from the same quadratic extension $K$ of $F$.}

Lastly, we recall that  from the work of   Gelbart and Jacquet   \cite{GJ}, it follows that if $\pi_1$ and $\pi_2$ are twist-equivalent (i.e., $\pi_1 \simeq \pi_2 \otimes \chi$ for some id\`ele class character $\chi$), then 
$$
1 \boxplus \Ad(\pi_1) \simeq \pi_1 \boxtimes \bar{\pi}_1 \simeq  \pi_2 \boxtimes \bar{\pi}_2 \simeq  1 \boxplus \Ad(\pi_2), 
$$
as $\chi\bar{\chi}\simeq 1$, which means that  $\Ad(\pi_1) \simeq   \Ad(\pi_2)$. Moreover, by  \cite[Theorem 4.1.2]{Rama00}, the conserve is also true. Thus, we have the following  twist-equivalence criterion:

\emph{$\pi_1$ and $\pi_2$ are twist-equivalent if and only if $\Ad(\pi_1)\simeq\Ad(\pi_2)$.}

\subsection{Walji's strategy  \cite{Wa14}}\label{strategy-Wa}

Let $F$ be a number field, and let $\pi_1$ and $\pi_2$ be  cuspidal automorphic representations
for $\GL_2(\Bbb{A}_F )$ with (unitary) central characters $\omega_1$ and $\omega_2$, respectively. As before, for each $i$, we denote the trace of  the Langlands conjugacy class of $\pi_i$ at unramified $v$ by $\lambda_{\pi_i}(v)$. For any $\alpha\in\Bbb{R}$, we define
$$
\mathcal{S}_\alpha= \mathcal{S}_\alpha(\pi_1,\pi_2) = \{\text{$v$ unramified for both $\pi_1$ and $\pi_2$} \mid \lambda_{\pi_1}(v)\neq e^{i\alpha} \lambda_{\pi_2}(v) \}.
$$
We  let $\underline{\delta}(\mathcal{S}_\alpha)$ denote the lower Dirichlet density of $\mathcal{S}_\alpha$ and let $\chi_{\mathcal{S}_\alpha}$ denote the indicator function of $\mathcal{S}_\alpha$.

Writing $a_v=\lambda_{\pi_1}(v)$ and  $b_v=\lambda_{\pi_2}(v)$, in light of the argument used by Walji \cite{Wa14} (who considered the case that $\alpha=0$), to study the lower bound of $\mathcal{S}_\alpha$, we shall apply the following consequence of the  Cauchy-Schwarz inequality:
\begin{equation}\label{C-S-ineq}
\sum_v \frac{|a_v -e^{i\alpha}b_v |^2 }{Nv^{s}}= \sum_v \frac{|a_v -e^{i\alpha}b_v |^2 \chi_{S_\alpha}(v)}{Nv^{s}}
\le \Big( \sum_v \frac{|a_v -e^{i\alpha}b_v |^4 }{Nv^{s}}\Big)^{\frac{1}{2}} \Big( \sum_{v\in S_\alpha} \frac{1}{Nv^{s}} \Big)^{\frac{1}{2}},
\end{equation} 
where, as later, the sums are over  unramified primes $v$ for both $\pi_1$ and $\pi_2$. Recalling that
$$
\lD( S_\alpha)= \liminf_{s\rightarrow 1^+} \frac{\sum_{v\in S_\alpha} \frac{1}{Nv^{s}}}{ \log(\frac{1}{s-1})},
$$
to obtain a lower bound for $\underline{\delta}(\mathcal{S}_\alpha)$, we shall  analyse the asymptotic behaviours of sums in \eqref{C-S-ineq} as real $s\rightarrow 1^+$.

  From the identities $|a_v -e^{i\alpha}b_v |^2 = a_v \bar{a}_v-e^{-i\alpha}a_v \bar{b}_v -e^{i\alpha} \bar{a}_v b_v  + b_v \bar{b}_v$ and 
\begin{align}\label{4th-power}
 \begin{split}
|a_v -e^{i\alpha}b_v |^4 &= a_v^2 \bar{a}_v^{2} + e^{-2i\alpha} a_v^2 \bar{b}_v^{2} + e^{2i\alpha}\bar{a}_v^{2}b_v^2    + b_v^2 \bar{b}_v^{2} + 4 a_v \bar{a}_v b_v \bar{b}_v \\
& -2  e^{-i\alpha}  a_v^2 \bar{a}_v \bar{b}_v -2  e^{i\alpha}  a_v \bar{a}_v^2 b_v  
 -2  e^{-i\alpha}  a_v b_v \bar{b}_v^2 -  2  e^{i\alpha}  \bar{a}_v b_v^2  \bar{b}_v,
 \end{split}
\end{align}
it is not hard to see that to bound the sums in \eqref{C-S-ineq} as real $s\rightarrow 1^+$, it suffices to study the asymptotic behaviours of the Dirichlet series
$$
\mathcal{D}(s;i,j,k,l) = \sum_v \frac{a_v^i \bar{a}_v^{j} b_v^k \bar{b}_v^{l}}{Nv^s}
$$
as real $s\rightarrow 1^+$. Indeed, for example, 
$$
\log (L(s, \pi_1\boxtimes \bar{\pi}_1 \times \pi_2\boxtimes \bar{\pi}_2)) = \mathcal{D}(s;1,1,1,1) +O(1),
$$
where the big-O term is contributed by prime powers of $v$ (we note that $\lambda_{\pi_1}(v^k)$ and $\lambda_{\pi_2}(v^k)$ can be controlled by the bound established by Blomer and Brumley \cite{BlBr11}), and thus
$$
\lim_{s\rightarrow 1^+} \frac{ \mathcal{D}(s;1,1,1,1)}{ \log(\frac{1}{s-1})} =  \lim_{s\rightarrow 1^+} \frac{\log (L(s, \pi_1\boxtimes \bar{\pi}_1 \times \pi_2\boxtimes \bar{\pi}_2))}{ \log(\frac{1}{s-1})} =\delta_{1,1,1,1}, 
$$
where $\delta_{1,1,1,1}$ denotes the order of the pole of $L(s,\pi_1\boxtimes \bar{\pi}_1 \times \pi_2\boxtimes \bar{\pi}_2)$ at $s=1$.  Hence, to prove our main result, it is sufficient to bound the orders of the poles of $L$-functions involved at $s=1$. We shall use this argument throughout Sections  \ref{Dih}, \ref{one-non-dih}, and \ref{proof-lD_S*}.

To prove Theorem \ref{main-thm-non-twist-equiv}, we borrow the following estimates  from \cite[Sec. 2 and p. 4999]{Wa14}.
 
\begin{proposition} 
In the notation as above, let $\pi_1$ and $\pi_2$ be distinct. For $0\le i,j,k,l\le 2$, let 
$$
\delta_{i,j,k,l}= \lim_{s\rightarrow 1^+} \frac{ \mathcal{D}(s;i,j,k,l)}{ \log(\frac{1}{s-1})}.
$$

If $\pi_1$ is not dihedral but $\pi_2$ is dihedral, then one has  
\begin{equation}\label{n-d&d}
\delta_{i,j,k,l}  \le 
\begin{cases}
 0 & \text{if $(i,j,k,l)\in\{ (2, 1, 0, 1), (0, 1, 2, 1), (1, 2, 1, 0), (1, 0, 1, 2)\}$;}\\
 1 & \text{if $(i,j,k,l)= (1,1,1,1)$;}\\
 2 & \text{if $(i,j,k,l)\in\{ (2, 2, 0, 0), (2, 0, 0, 2), (0, 2, 2, 0)\}$;}\\
 4 & \text{if $(i,j,k,l)=(0, 0, 2, 2)$.}
\end{cases}
\end{equation}

If both $\pi_1$ and $\pi_2$  are   non-dihedral and non-tetrahedral,\footnote{A cuspidal automorphic representation $\pi$ for $\GL_2(\Bbb{A}_F )$ is called tetrahedral if it is non-dihedral and its symmetric square $\Sym^2\pi$ admits a non-trivial self-twist by a (cubic) Hecke character.  By the work of Kim and  Shahidi  \cite{KS00, KS02},  $\Sym^3\pi$ is cuspidal if $\pi$ is non-tetrahedral.}   then one has  
\begin{equation}\label{both-non-te}
\delta_{i,j,k,l} \le
 \begin{cases} 
0 & \text{if $(i,j,k,l)\in\{ (2, 1, 0, 1), (0, 1, 2, 1), (1, 2, 1, 0), (1, 0, 1, 2)\}$;}\\
2& \text{if $(i,j,k,l)\in\{ (1, 1, 1, 1), (2, 2, 0, 0), (0, 0, 2, 2), (2, 0, 0, 2), (0, 2, 2, 0)\}$.}
 \end{cases}
\end{equation}

If both $\pi_1$ and $\pi_2$  are  non-dihedral, and at least one of $\pi_1$ and $\pi_2$ is tetrahedral, then one has  
\begin{equation}\label{one-te}
\begin{cases}
\delta_{i,j,k,l}= \kappa_1 & \text{if $(i,j,k,l)\in\{ (2, 1, 0, 1), (0, 1, 2, 1), (1, 2, 1, 0), (1, 0, 1, 2)\}$;}\\
\delta_{i,j,k,l}\le 2& \text{otherwise,}
\end{cases}
\end{equation}
where $\kappa_1$ is 1  if $\pi_1\simeq \pi_2 \otimes \nu$ for some cubic Hecke character $\nu$ and 0 otherwise.

If  $\pi_1$ and $\pi_2$ are  non-dihedral and non-twist-equivalent, then one has  
\begin{equation}\label{n-t-e}
\begin{cases}
\delta_{i,j,k,l}  = 0 & \text{if $(i,j,k,l)\in\{ (2, 1, 0, 1), (0, 1, 2, 1), (1, 2, 1, 0), (1, 0, 1, 2)\}$;}\\
\delta_{i,j,k,l} \le 1 & \text{if $(i,j,k,l)=(1,1,1,1)$;} \\
\delta_{i,j,k,l} \le  2& \text{if $(i,j,k,l)\in\{  (2, 2, 0, 0), (0, 0, 2, 2)\}$;}\\
\delta_{i,j,k,l} =\kappa_2& \text{if $(i,j,k,l)\in\{ (2, 0, 0, 2), (0, 2, 2, 0)\}$,}
\end{cases}
\end{equation}
where $\kappa_2$ is 1  if $\omega_1= \omega_2$ and 0 otherwise.

\end{proposition}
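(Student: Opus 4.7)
The plan is to recognise each Dirichlet series $\mathcal{D}(s;i,j,k,l)$ as the logarithm (up to an $O(1)$ error coming from prime powers, bounded via the Blomer--Brumley estimate) of the Rankin--Selberg $L$-function attached to the automorphic representation
\[
\Pi_{i,j,k,l} \,=\, \pi_1^{\boxtimes i} \boxtimes \bar{\pi}_1^{\boxtimes j} \boxtimes \pi_2^{\boxtimes k} \boxtimes \bar{\pi}_2^{\boxtimes l},
\]
so that $\delta_{i,j,k,l}$ is the order of the pole of $L(s,\Pi_{i,j,k,l})$ at $s=1$. By the Jacquet--Shalika theorem, if $\Pi_{i,j,k,l}$ decomposes as an isobaric sum $\boxplus_m \Pi_m$ of unitary cuspidals, then this pole order equals the number of indices $m$ with $\Pi_m \simeq 1$.

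The decomposition of $\Pi_{i,j,k,l}$ is obtained from repeated application of the following $\GL(2)$-identities together with the relevant functoriality results: the plethysms $\pi_i \boxtimes \bar{\pi}_i \simeq 1 \boxplus \Ad(\pi_i)$, $\pi_i \boxtimes \pi_i \simeq \Sym^2\pi_i \boxplus \omega_i$, and $\pi_i \boxtimes \Sym^2\pi_i \simeq \Sym^3\pi_i \boxplus (\pi_i \otimes \omega_i)$, together with $\Ad(\pi_i) \simeq \Sym^2\pi_i \otimes \omega_i^{-1}$; the cuspidality of $\Ad(\pi_i)$ (Gelbart--Jacquet) when $\pi_i$ is non-dihedral; Ramakrishnan's automorphy of $\pi_1 \boxtimes \pi_2$; and the cuspidality of $\Sym^3\pi_i$ (Kim--Shahidi) when $\pi_i$ is non-tetrahedral. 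Iterating these expansions writes $\Pi_{i,j,k,l}$ as an explicit isobaric sum of cuspidals on $\GL(n)$ for $n \in \{1,2,3,4\}$, and any coincidence among the summands is then controlled by strong multiplicity one and by the twist-equivalence criterion $\pi_1 \simeq \pi_2 \otimes \chi \Leftrightarrow \Ad(\pi_1) \simeq \Ad(\pi_2)$.

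As a typical example, for $(i,j,k,l) = (1,1,1,1)$ one computes
\[
L(s,\Pi_{1,1,1,1}) \,=\, \zeta_F(s)\, L(s,\Ad\pi_1)\, L(s,\Ad\pi_2)\, L(s,\Ad\pi_1 \times \Ad\pi_2),
\]
whose pole at $s=1$ has order $1$ if $\Ad\pi_1 \not\simeq \Ad\pi_2$ and order $2$ otherwise; the non-twist-equivalence hypothesis thus yields $\delta_{1,1,1,1} \le 1$ as asserted in \eqref{n-t-e}. For $(i,j,k,l) = (2,1,0,1)$, iterated expansion gives
\[
\Pi_{2,1,0,1} \,\simeq\, \bigl(\Sym^3\pi_1 \otimes \omega_1^{-1}\bigr) \boxtimes \bar{\pi}_2 \,\boxplus\, 2\,(\pi_1 \boxtimes \bar{\pi}_2),
\]
and a pole at $s=1$ can arise only from $\Sym^3\pi_1 \otimes \omega_1^{-1} \simeq \pi_2$ or from $\pi_1 \simeq \pi_2$; the latter is ruled out by the distinctness of $\pi_1$ and $\pi_2$, while the former requires $\Sym^3\pi_1$ to be non-cuspidal, i.e., $\pi_1$ tetrahedral, which does not occur in \eqref{both-non-te} or \eqref{n-t-e}.

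The principal difficulty lies in the careful bookkeeping of potential coincidences among the cuspidal summands once the full decomposition is carried out. In the tetrahedral scenario, $\Sym^3\pi_i$ further breaks up as a sum of cuspidal twists of $\pi_i$ by the cubic character associated with the self-twist of $\Sym^2\pi_i$, and a coincidence of such a twist with $\pi_j$ (or $\bar{\pi}_j$) gives exactly the $\kappa_1$ correction appearing in \eqref{one-te}. In the dihedral cases, $\Ad(\pi_i)$ is no longer cuspidal and splits off additional id\`ele class characters arising from a quadratic base change, which accounts for the weaker bounds in \eqref{n-d&d}; in \eqref{n-t-e}, the non-twist-equivalence hypothesis is precisely what eliminates every would-be isomorphism of the form $\Ad\pi_1 \simeq \Ad\pi_2$ or $\pi_1 \simeq \pi_2 \otimes \chi$. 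Running the same procedure through each remaining choice of $(i,j,k,l)$ produces the four tabulated sets of bounds.
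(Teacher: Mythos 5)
Your proposal is correct and follows essentially the same route as the paper, which does not reprove these estimates but quotes them from Walji \cite{Wa14}: there, exactly as in your sketch, each $\mathcal{D}(s;i,j,k,l)$ is identified (up to $O(1)$, via Blomer--Brumley) with the logarithm of a Rankin--Selberg $L$-function, which is decomposed into cuspidal isobaric pieces using $\pi\boxtimes\bar\pi\simeq 1\boxplus\Ad(\pi)$, Ramakrishnan's $\GL(2)\times\GL(2)$ functoriality, and the Kim--Shahidi results on $\Sym^3$, with pole orders at $s=1$ counted through strong multiplicity one and the twist-equivalence criterion. Your worked cases $(1,1,1,1)$ and $(2,1,0,1)$, and the description of how the tetrahedral decomposition of $\Sym^3\pi$ produces the $\kappa_1$ term and the central characters produce $\kappa_2$, agree with that argument.
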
 

\begin{remark}
Let $\pi$ and $\pi'$ be  cuspidal automorphic representations, with unitary central characters, for $\GL_n(\Bbb{A}_F )$ and $\GL_m(\Bbb{A}_F )$, respectively. By the theory of Rankin-Selberg $L$-functions, we know that the poles of $L(s,\pi\times \pi')$ and $L(s,\bar{\pi}\times \bar{\pi}')$ at $s=1$ must have the same order. Consequently,  the poles of $\mathcal{D}(s;2,0,0,2)$ and $\mathcal{D}(s;0,2,2,0)$ at $s=1$ are of the same order, and thus $\delta_{2,0,0,2}=\delta_{0,2,2,0}$. Similarly, $\delta_{2, 1, 0, 1}= \delta_{1, 2, 1, 0}$ and $ \delta_{0, 1, 2, 1} = \delta_{1, 0, 1, 2}$. We shall use this fact throughout our discussion.
\end{remark}

\subsection{Joint Sato-Tate distribution and Selberg polynomials}\label{jSTd-Sp}

To adapt the strategies developed in  Murty-Pujahari \cite{MP17} and   Murty-Rajan \cite{MR96}, we shall recall the joint Sato-Tate distribution and some basic properties of Selberg polynomials.

Let $F$ be a totally real number field.  For each $i$, let $\pi_i$ be a cuspidal automorphic representation  corresponding to a non-CM Hilbert newform   of weights $k_{i,j}\ge 2$ (at all infinite primes $v_j$ of $F$) and with trivial nebentypus. For each $i$, we write
$$
\lambda_{\pi_i}(v)=2\cos  \theta_{i,v}
$$
for some $\theta_{i,v}\in [0,\pi]$. (Recall that by the work of  Blasius \cite{Bl06}, the Ramanujan-Petersson conjecture holds for each $\pi_i$.)

We recall that the $n$-th  Chebyshev polynomial $U_n(x)$  (of the second type) is defined by
$$
U_n(\cos\theta)= \frac{\sin( (n+1)\theta)}{\sin\theta}.
$$
(Note that $U_0(\cos\theta)\equiv 1$ and $U_1(\cos\theta)=2 \cos\theta$.)

We will require the following version of the joint Sato-Tate distribution, which is a consequence of the work of Barnet-Lamb \emph{et al.} \cite{BLGG11, BGGT11}  (see \cite[Theorem 1.1 and Sec. 3]{PJW19} for more details).

\begin{proposition}[Barnet-Lamb \emph{et al.}]\label{uncond-ST}
In the notation as above, for any $m_1,m_2\in\Bbb{N}$,  one has
\begin{align*}
\sum_{  Nv\le x  } U_{m_1}(\cos\theta_{1,v})U_{m_2}(\cos\theta_{2,v})  =  o(\pi_F(x) ),
\end{align*}
where $\pi_F(x)$ denotes the number of primes $v$ of $F$ such that $Nv\le x$.
\end{proposition}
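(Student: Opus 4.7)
My plan is to reduce the character sum to a prime sum against Dirichlet coefficients of a Rankin--Selberg $L$-function and invoke a standard Tauberian argument, using the automorphy of symmetric powers as the key input of Barnet-Lamb \emph{et al.}

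First I would translate the Chebyshev polynomials into Fourier coefficients. Because each $\pi_i$ satisfies the Ramanujan--Petersson conjecture at all finite places (by Blasius \cite{Bl06}), the Satake parameters of $\pi_i$ at an unramified $v$ are $\{e^{\pm i\theta_{i,v}}\}$, and the character of $\Sym^{m}$ of $\mathrm{diag}(e^{i\theta},e^{-i\theta})$ equals $U_m(\cos\theta)$. Hence
\[
U_{m_i}(\cos\theta_{i,v}) \;=\; \lambda_{\Sym^{m_i}\pi_i}(v),
\]
and the quantity whose sum we wish to bound is the $v$-th Dirichlet coefficient of the Rankin--Selberg $L$-function $L(s,\Sym^{m_1}\pi_1\times \Sym^{m_2}\pi_2)$, up to a bounded contribution coming from prime powers of $v$ (which, thanks to the Ramanujan bound, contribute $O(1)$ to the corresponding logarithmic Dirichlet series).

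Next, I would use potential automorphy. The work of Barnet-Lamb, Gee, Geraghty and Taylor \cite{BLGG11, BGGT11} establishes the automorphy (indeed cuspidality, as $\pi_i$ is non-CM) of every $\Sym^{m}\pi_i$ as an automorphic representation of $\GL_{m+1}(\Bbb{A}_F)$. Consequently, by Jacquet--Shalika and Shahidi, $L(s,\Sym^{m_1}\pi_1\times \Sym^{m_2}\pi_2)$ extends to a meromorphic function on $\mathrm{Re}(s)\ge 1$, is non-vanishing on $\mathrm{Re}(s)=1$, and its only possible pole on this line is simple at $s=1$, occurring precisely when $\Sym^{m_1}\pi_1\simeq \overline{\Sym^{m_2}\pi_2}$. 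Under the standing hypothesis that $\pi_1$ and $\pi_2$ are not twist-equivalent (which is the case of interest in the applications, and is the setting in which the proposition is invoked), this isomorphism cannot hold for any $(m_1,m_2)\neq(0,0)$: the central-character/nebentypus comparison together with the non-twist-equivalence rules it out after a short case analysis, since $\Sym^{m_1}\pi_1\simeq \overline{\Sym^{m_2}\pi_2}$ forces a twist-equivalence between $\pi_1$ and $\pi_2$ (a standard consequence of the cuspidality of the symmetric powers).

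Finally, I would invoke the prime-number theorem for the automorphic $L$-function $L(s,\Sym^{m_1}\pi_1\times\Sym^{m_2}\pi_2)$, equivalently a Wiener--Ikehara Tauberian theorem applied to its logarithmic derivative. Since this $L$-function is holomorphic and non-vanishing at $s=1$, one obtains
\[
\sum_{Nv\le x}\lambda_{\Sym^{m_1}\pi_1}(v)\,\lambda_{\Sym^{m_2}\pi_2}(v) \;=\; o(\pi_F(x)),
\]
which is exactly the assertion. The main obstacle is the ingredient rather than the argument: one needs the automorphy (and the required analytic properties) of all Rankin--Selberg products of symmetric powers, which was only recently made available through the potential-automorphy machinery of Barnet-Lamb \emph{et al.}; this is also why the proposition is only qualitative, and why an effective version, alluded to in the remark following Theorem \ref{main-GRH}, would require substantially more work.
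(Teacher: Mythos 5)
Your overall strategy --- identify $U_{m_i}(\cos\theta_{i,v})$ with $\lambda_{\Sym^{m_i}\pi_i}(v)$, reduce the Weyl sum to the behaviour of $L(s,\Sym^{m_1}\pi_1\times\Sym^{m_2}\pi_2)$ near $\Re(s)= 1$, and finish with a Wiener--Ikehara argument --- is indeed the route taken in the sources the paper points to for this proposition (\cite{Harris14}, \cite[Sec.~4]{MP17}, \cite{PJW19}); the paper itself gives no proof beyond that citation. But your key input is misstated, and the misstatement hides the real work. The results of Barnet-Lamb, Gee, Geraghty and Taylor \cite{BLGG11,BGGT11} give only \emph{potential} automorphy of the symmetric powers of a non-CM Hilbert newform, i.e.\ cuspidal automorphy after a suitable totally real Galois base change, not automorphy (let alone cuspidality) of $\Sym^{m}\pi_i$ over $F$ itself. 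Automorphy of all symmetric powers is known only over $\Bbb{Q}$, by Newton--Thorne \cite{NT20}; this is precisely why the paper's effective Theorems \ref{main-2} and \ref{main-GRH} are confined to $F=\Bbb{Q}$ and why the remark following Theorem \ref{main-GRH} treats automorphy of symmetric powers of Hilbert newforms as an additional hypothesis. Consequently your appeal to Jacquet--Shalika and Shahidi for the continuation and non-vanishing of $L(s,\Sym^{m_1}\pi_1\times\Sym^{m_2}\pi_2)$ on $\Re(s)\ge 1$ has no basis as written. The missing bridge --- which is the actual content of the cited proofs --- is the Brauer--Taylor induction: pass to a totally real Galois extension over which both symmetric powers become cuspidal automorphic, use Brauer's theorem to write the $L$-function over $F$ as a product, with integer exponents, of Rankin--Selberg $L$-functions of character twists of these base changes over intermediate fields, and only then deduce meromorphy together with holomorphy and non-vanishing on $\Re(s)\ge 1$.

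Two further points. First, you were right to insert the hypothesis that $\pi_1$ and $\pi_2$ are not twist-equivalent: read literally (e.g.\ $\pi_1\simeq\pi_2$ and $m_1=m_2$) the displayed estimate fails, and non-twist-equivalence is exactly the setting in which Proposition \ref{uncond-ST} is invoked in Section \ref{uncond-proof}. Second, your disposal of the possible pole at $s=1$ --- ``$\Sym^{m}\pi_1\simeq\overline{\Sym^{m}\pi_2}$ forces twist-equivalence, a standard consequence of cuspidality'' --- is not a short case analysis: for $m=2$ this is a theorem of Ramakrishnan \cite{Rama00}, and for general $m$ it requires genuine input on the attached Galois representations (results of Rajan type); moreover, in the Brauer factorisation the analogous dual/twist relations must be excluded over the intermediate fields, which is where the non-CM and non-twist-equivalence hypotheses are really used. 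These verifications are the substance of \cite[Sec.~4]{MP17} and \cite{PJW19} and cannot be waved through.
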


We shall further require the following effective versions of the joint Sato-Tate distribution, proved by Thorner \cite[Proposition 2.2]{Th20} and the author \cite{PJW19}.

\begin{proposition}[Thorner]\label{Th2} 
For each $i$, let $\pi_i$ be a cuspidal automorphic representation  corresponding to a non-CM  newform  in $S_{k_i}^{\mathrm{new}}(\Gamma_0(q_i))$  with trivial nebentypus.  Assume that all the symmetric powers $\Sym^{m_1}\pi_1$ and $\Sym^{m_2}\pi_2$ are automorphic. Then there exist positive constants $c_1$, $c_2$, $c_3$, $c_4$ and $c_5$  such that  for any
$$
1\le m_1, m_2\le M \le c_1  \sqrt{\log \log x}/ \log (k_1q_1k_2q_2 \log \log x),
$$
one has
\begin{align*}
\sum_{  p\le x  }  & U_{m_1}(\cos\theta_{1,p})U_{m_2}(\cos\theta_{2,p})  \ll \pi(x)\exp\Big(\frac{-c_2 \log x}{(k_1q_1k_2q_2  M)^{ c_3 M^2}} \Big)    \\
&  +\pi(x)(m_1 m_2)^2  \Big( x^{\frac{-1}{c_4 M^2}} +\exp\Big(\frac{-c_5\log x}{ M^2\log (k_1q_1k_2q_2 M) } \Big)+\exp\Big(\frac{ -c_5 \sqrt{\log x}  }{ M} \Big)   \Big).
\end{align*}
\end{proposition}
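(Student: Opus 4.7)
The plan is to reduce the sum to a prime sum of coefficients of a Rankin-Selberg $L$-function and then apply effective prime-counting estimates. Concretely, since the Satake trace of $\Sym^m \pi_i$ at an unramified prime $p$ equals $U_m(\cos\theta_{i,p})$, one has
$$
\sum_{p\le x} U_{m_1}(\cos\theta_{1,p}) U_{m_2}(\cos\theta_{2,p}) = \sum_{p\le x} \lambda_{\Sym^{m_1}\pi_1}(p)\,\lambda_{\Sym^{m_2}\pi_2}(p),
$$
which, up to negligible contributions from ramified primes and prime powers (handled by the Blomer-Brumley bounds), is the partial prime sum of Dirichlet coefficients of $\Pi := \Sym^{m_1}\pi_1 \boxtimes \Sym^{m_2}\pi_2$. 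The hypothesis of automorphy of all symmetric powers, together with the Jacquet-Shalika theorem, guarantees that $L(s,\Pi)$ is entire apart from a possible simple pole at $s=1$ occurring only when $\Sym^{m_1}\pi_1 \simeq \Sym^{m_2}\pi_2$; in that exceptional case one first subtracts the corresponding main term from the sum.

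I would then apply a truncated Perron formula and shift the contour for
$$
-\frac{1}{2\pi i} \int_{c-iT}^{c+iT} \frac{L'}{L}(s,\Pi)\,\frac{x^s}{s}\,ds
$$
past the line $\Re(s)=1$ into a de la Vall\'ee Poussin-type zero-free region for $L(s,\Pi)$. Standard bounds give $\log C(\Pi) \ll M^2 \log(k_1 q_1 k_2 q_2 M)$ for the analytic conductor of $\Pi$. A classical zero-free region of width $c/\log(C(\Pi)(|t|+3))$, valid outside at most one exceptional real zero, then contributes a term of size $x \exp(-c_5 \log x / (M^2 \log(k_1 q_1 k_2 q_2 M)))$; balancing the truncation height $T$ against the tail integral produces the additional contribution $x \exp(-c_5 \sqrt{\log x}/M)$. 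The polynomial prefactor $(m_1 m_2)^2$ accounts for residue bookkeeping and the polynomial growth of $\lambda_\Pi$ at prime powers.

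To handle the possible Landau-Siegel exceptional zero uniformly in $(m_1, m_2)$, I would invoke a log-free zero-density estimate of the form $N(\sigma, T, \Pi) \ll (C(\Pi) T)^{A(1-\sigma)}$ for Rankin-Selberg $L$-functions, in the spirit of Kowalski-Michel and its refinements by Thorner-Zaman and Humphries. Applied with $T$ a small power of $\log x$, this yields a Siegel-type contribution bounded by $x\exp(-c_2 \log x / (k_1 q_1 k_2 q_2 M)^{c_3 M^2})$, matching the first term in the proposition. The admissible range $M \le c_1 \sqrt{\log\log x}/\log(k_1 q_1 k_2 q_2 \log\log x)$ is then precisely what is needed so that each of the three error sources beats any fixed negative power of $\log\log x$.

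The principal obstacle is the log-free zero-density estimate with explicit polynomial dependence on $C(\Pi)$ uniform in $(m_1,m_2)$: because $C(\Pi)$ grows exponentially in $M^2$, a naive convexity or zero-counting argument is far too lossy to produce the stated exponent $c_3 M^2$. A secondary subtlety is verifying that the coincidence $\Sym^{m_1}\pi_1 \simeq \Sym^{m_2}\pi_2$ either does not arise in the relevant $(m_1,m_2)$-range, or else is handled by clean subtraction of the corresponding residue; this can be arranged via the twist-equivalence criterion discussed in Section \ref{pre} combined with a rigidity statement for symmetric-power lifts under non-trivial self-twists.
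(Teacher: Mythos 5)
The paper does not actually prove this proposition: it is quoted from Thorner \cite[Proposition 2.2]{Th20}, so the only meaningful comparison is with Thorner's method. Your skeleton --- read $U_{m_1}(\cos\theta_{1,p})U_{m_2}(\cos\theta_{2,p})$ as the prime coefficient of the Rankin--Selberg $L$-function $L(s,\Sym^{m_1}\pi_1\times\Sym^{m_2}\pi_2)$, use automorphy of the symmetric powers for analytic control, bound the analytic conductor by $\log C(\Pi)\ll M^{2}\log(k_1q_1k_2q_2M)$, and run a truncated Perron/explicit-formula argument through a classical zero-free region --- is indeed the route behind \cite{Th20}, and the second group of error terms arises essentially as you describe.

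Two steps of your sketch, however, are genuinely problematic. First, the term $\pi(x)\exp\bigl(-c_2\log x/(k_1q_1k_2q_2M)^{c_3M^2}\bigr)$ is not produced by a log-free zero-density estimate: it is precisely the contribution $x^{\beta-1}$ of a possible real exceptional zero $\beta$ of the Rankin--Selberg $L$-function, combined with an \emph{effective} repulsion bound of Landau/Goldfeld--Hoffstein--Lieman type, $1-\beta\gg C(\Pi)^{-c}$, which yields exactly this shape because $\log C(\Pi)\asymp M^{2}\log(k_1q_1k_2q_2M)$. Zero-density estimates count zeros in regions and neither replace nor supply a lower bound on $1-\beta$ for the single exceptional zero; since you explicitly leave this (in your density formulation) as an unresolved ``principal obstacle,'' your argument is missing its key quantitative input, whereas the input actually needed (and used in \cite{Th20}) is the polynomial-in-conductor bound on the real zero, available here because the relevant self-dual isobaric Rankin--Selberg combinations have nonnegative Dirichlet coefficients. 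Second, your fallback of ``subtracting the main term'' when $\Sym^{m_1}\pi_1\simeq\Sym^{m_2}\pi_2$ cannot work: in that case the left-hand side is $\sim\pi(x)$ and the stated inequality is simply false, so no bookkeeping rescues it. The statement must be read with the hypothesis, present in Thorner's formulation and in every application in Section \ref{main-proof}, that $\pi_1$ and $\pi_2$ are not twist-equivalent; symmetric-power rigidity for non-CM forms then rules out $\Sym^{m_1}\pi_1\simeq\Sym^{m_2}\pi_2$, so $L(s,\Sym^{m_1}\pi_1\times\Sym^{m_2}\pi_2)$ is entire and no residue ever needs to be subtracted. (A minor point: the appeal to Blomer--Brumley is unnecessary, since Deligne's bound holds for holomorphic newforms and already controls the prime-power terms.)
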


\begin{proposition}\label{PJW} \cite[p. 287]{PJW19}
In the notation of Proposition \ref{Th2}, assume that  the symmetric powers $\Sym^{m_1}\pi_1$ and $\Sym^{m_2}\pi_2$ are automorphic. If the Rankin-Selberg $L$-function $L(s,\Sym^{m_1}\pi_1 \times \Sym^{m_2}\pi_2)$ satisfies the generalised Riemann hypothesis, then
\begin{align*}
\sum_{   p\le x  }  & U_{m_1}(\cos\theta_{1,p})U_{m_2}(\cos\theta_{2,p})  \ll m_1m_2 x^{1/2}\log ( (k_1q_1k_2q_2 )(m_1+m_2) x) .
\end{align*}
\end{proposition}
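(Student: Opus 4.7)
The plan is to rewrite the sum in terms of Dirichlet coefficients of the Rankin--Selberg $L$-function $L(s,\Pi)$ with $\Pi:=\Sym^{m_1}\pi_1\times\Sym^{m_2}\pi_2$, and then apply the standard GRH-conditional prime-counting estimate for $L(s,\Pi)$.

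The first step is an algebraic identification: writing $\lambda_{\pi_i}(p)=2\cos\theta_{i,p}$, the Satake parameters of $\pi_i$ at $p$ are $\{e^{\pm i\theta_{i,p}}\}$, hence those of $\Sym^{m}\pi_i$ are $\{e^{i(m-2j)\theta_{i,p}}\}_{j=0}^{m}$, and summing gives
$$
\lambda_{\Sym^{m}\pi_i}(p)=\sum_{j=0}^{m}e^{i(m-2j)\theta_{i,p}}=\frac{\sin((m+1)\theta_{i,p})}{\sin\theta_{i,p}}=U_m(\cos\theta_{i,p}).
$$
Consequently the sum in the proposition equals
$$
\sum_{p\le x}\lambda_{\Sym^{m_1}\pi_1}(p)\,\lambda_{\Sym^{m_2}\pi_2}(p),
$$
which, by comparing Euler factors, is (up to the weight $\log p$ and contributions from prime powers $p^k$, $k\ge 2$) the prime-indexed coefficient sum for $-L'/L(s,\Pi)$.

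By the assumed automorphy of each $\Sym^{m_i}\pi_i$ and the Rankin--Selberg theory recalled in Section \ref{L-functions}, $L(s,\Pi)$ has the standard meromorphic continuation, functional equation, and non-vanishing on $\Re(s)\ge 1$; its degree is $d=(m_1+1)(m_2+1)\asymp m_1m_2$, and its analytic conductor satisfies $\log C_\Pi\ll m_1m_2\log(k_1q_1k_2q_2(m_1+m_2))$. Under the stated GRH for $L(s,\Pi)$, the standard Perron contour argument---shifting to $\Re(s)=\tfrac12$, invoking the GRH-conditional bound for $L'/L(s,\Pi)$, and summing over zeros via the Riemann--von Mangoldt density $N(T,\Pi)\ll dT\log(C_\Pi T)$---produces
$$
\sum_{n\le x}\Lambda(n)a_\Pi(n)=r_\Pi x+O\bigl(\sqrt{x}\log^2(C_\Pi x)\bigr),
$$
where $r_\Pi\in\{0,1\}$ is the order of the pole of $L(s,\Pi)$ at $s=1$. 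Standard partial summation (which trades one factor of $\log$ for a factor of $\log x$ in the denominator), combined with the trivial bound $|\lambda_{\Sym^{m_i}\pi_i}(p^k)|\le m_i+1$ (which controls prime powers $p^k$, $k\ge 2$, by $O(m_1m_2\sqrt{x})$), then yields
$$
\sum_{p\le x}\lambda_{\Sym^{m_1}\pi_1}(p)\lambda_{\Sym^{m_2}\pi_2}(p)\ll \sqrt{x}\log(C_\Pi x)\ll m_1m_2\sqrt{x}\log\bigl(k_1q_1k_2q_2(m_1+m_2)x\bigr),
$$
where $\log C_\Pi$ has been absorbed into the outer prefactor. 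When $\Sym^{m_1}\pi_1\simeq\overline{\Sym^{m_2}\pi_2}$, $L(s,\Pi)$ has a simple pole at $s=1$ contributing a main term of size $x/\log x$; the non-twist-equivalence hypothesis together with cuspidality of the relevant symmetric powers in the applications of Theorem \ref{main-GRH} prevent this for the pairs $(m_1,m_2)$ in question.

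The principal technical obstacle is the careful bookkeeping of the degree- and conductor-dependence through the explicit formula so as to produce a single linear factor $m_1m_2$ outside a single logarithm, rather than the quadratic dependence produced by a naive zero-sum estimate; this is handled in \cite{PJW19} via the standard truncation-optimisation in the Perron integral.
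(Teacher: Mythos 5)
Your overall route is the correct and standard one, and it is essentially what the cited source does: the paper itself gives no proof of this proposition but refers to \cite{PJW19}, where the argument is exactly the identification $U_{m_i}(\cos\theta_{i,p})=\lambda_{\Sym^{m_i}\pi_i}(p)$, the interpretation of the sum as the prime-coefficient sum of $L(s,\Pi)$ with $\Pi=\Sym^{m_1}\pi_1\times\Sym^{m_2}\pi_2$, a GRH zero-sum/explicit-formula estimate with degree and conductor bookkeeping, and partial summation together with a Ramanujan-type bound for prime powers and the finitely many ramified places. Your remark about the possible pole is also appropriate: for the pairs $(m_1,m_2)$ used in Theorem \ref{main-GRH}, cuspidality of the symmetric powers and non-twist-equivalence force $r_\Pi=0$.

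The genuine gap is in the quantitative step, which is the entire content of the proposition. You claim $\sum_{n\le x}\Lambda(n)a_\Pi(n)-r_\Pi x\ll\sqrt{x}\log^2(C_\Pi x)$ and then assert that partial summation ``trades one factor of $\log$ for a factor of $\log x$'', landing on $\sqrt{x}\log(C_\Pi x)$. That passage is only valid when $\log C_\Pi\ll\log x$: in general $\log^2(C_\Pi x)/\log x$ contains the term $(\log C_\Pi)^2/\log x\asymp (m_1m_2)^2\log^2\bigl(k_1q_1k_2q_2(m_1+m_2)\bigr)/\log x$, which is not $O\bigl(m_1m_2\log((k_1q_1k_2q_2)(m_1+m_2)x)\bigr)$. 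In the very application the proposition serves (Theorem \ref{main-GRH}), $m_1,m_2$ are taken as large as roughly $x^{1/6}$, so $\log C_\Pi\asymp m_1m_2\log(\cdots)$ is a positive power of $x$ and your intermediate bound is worse than trivial there. The required uniformity comes from the sharper standard GRH estimate in which $\log C_\Pi$ enters linearly: with $d=(m_1+1)(m_2+1)$ one has $\sum_{|\gamma|\le T}|\rho|^{-1}\ll\log(C_\Pi T^{d})\log T$, and truncating at $T\asymp\sqrt{x}$ gives $\psi_\Pi(x)-r_\Pi x\ll\sqrt{x}\,(\log x)\,\bigl(d\log x+\log C_\Pi\bigr)$; partial summation then yields $\sqrt{x}\,\bigl(d\log x+\log C_\Pi\bigr)\ll m_1m_2\sqrt{x}\log\bigl((k_1q_1k_2q_2)(m_1+m_2)x\bigr)$, which is the stated bound. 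Your closing sentence concedes that this bookkeeping is ``handled in \cite{PJW19}'', but since producing a single linear factor $m_1m_2$ and a single logarithm, uniformly in $m_1,m_2$, is precisely what the proposition asserts, deferring it leaves the proposal incomplete as a proof.
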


\begin{remark}
We shall note that the automorphy assumption of  $\Sym^{m_1}\pi_1$ and $\Sym^{m_2}\pi_2$ in Propositions \ref{Th2} and  \ref{PJW} can be removed by invoking the recent work of Newton and Thorne \cite{NT20}.
\end{remark}

To end this section, we review some basic properties of Selberg polynomials. We begin by recalling that for any integer $M\ge 1$, the Vaaler polynomial $V_M(x)$ is defined by
\begin{align*}
V_M(x)&=\frac{1}{M+1}\sum_{k=1}^M \Big(\frac{k}{M+1}-\frac{1}{2}\Big)\Delta_{M+1}\Big(x-\frac{k}{M+1}\Big)\\
&+\frac{1}{2\pi(M+1)}\sin(2\pi(M+1 )x) -\frac{1}{2\pi}\Delta_{M+1}(x) \sin (2\pi x),
\end{align*}
where $\Delta_{M}(x)=\frac{1}{M}(\frac{\sin(\pi Mx)}{\sin(\pi x)})^2$ is the Fej\'er kernel (see, e.g., \cite[Sec. 1.2, Eq. (16) and (17)]{Mon94}). Following \cite[Sec. 1.2, Eq. ($21^+$)]{Mon94}, for any subinterval $J=[0,\delta]\subseteq [0,1]$ and integer $M\ge 1$, we define the Selberg polynomial 
\begin{equation}\label{S+}
S^+_{J,M}(x) = \delta + B_M(x - \delta) + B_M(-x),
\end{equation}
where $B_M(x)$ is the Beurling polynomial as defined in \cite[Sec. 1.2, Eq. (20)]{Mon94}, namely,
$$
B_M(x) = V_M(x) +\frac{1}{2(M+1)} \Delta_{M+1}(x).
$$
We recall that
$$
\chi_{J}(x)\le S^+_{J,M}(x),
$$
where $\chi_J$ is the indicator function of $J$. (In what follows, we shall regard both $S^+_{J,M}(x)$ and $\chi_{J}(x)$ as functions of period 1 defined over $\Bbb{R}$.) Moreover,  writing the Fourier expansion of $S^+_{J,M}(x)$ as
$$
S^+_{J,M}(x)= \sum_{n=-\infty}^{\infty}   \hat{S}^+_{J,M}(n) e^{2\pi i n x},
$$
we know that 
$$
 \hat{S}^+_{J,M}(n)=
\begin{cases}
\delta +\frac{1}{M+1} & \text{if $n=0$;}\\
0& \text{if $|n|>M$;}
\end{cases}
$$
also, for $1\le |n| \le M$, 
$$
| \hat{S}^+_{J,M}(n) | \le \frac{1}{M+1} +\min\Big\{\delta, \frac{1}{\pi |n|} \Big\}
$$
(see \cite[pp. 6-8]{Mon94}). From the definition of Fourier transform, it follows directly that
$$
\hat{S}^+_{J,M}(n) +\hat{S}^+_{J,M}(-n) =2\Re(\hat{S}^+_{J,M}(n) ),
$$
and thus
\begin{align*}
S^+_{J,M}(x)+S^+_{J,M}(-x) 
&= 2\delta +\frac{2}{M+1}+  2\sum_{0<|n|\le M} \hat{S}^+_{J,M}(n) \cos(2\pi nx)\\
&=  2\delta +\frac{2}{M+1}+  4\sum_{n=1}^M \Re(\hat{S}^+_{J,M}(n) ) \cos(2\pi nx).
\end{align*}
To summarise, we have the following proposition.
\begin{proposition}\label{Sel-poly}
In the notation as above,  for any subinterval $J=[0,\delta]\subseteq [0,1]$ and integer $M\ge 1$, one has
$$
\mathcal{I}_{\delta}(\theta):=\frac{1}{2}\Big( \chi_{J}\Big(\frac{\theta}{2\pi}\Big)+\chi_{J}\Big(-\frac{\theta}{2\pi}\Big) \Big)\le   \delta +\frac{1}{M+1}+  2\sum_{n=1}^M \Re(\hat{S}^+_{J,M}(n) ) \cos(n\theta),
$$
where for $1\le n \le M$,
$$
|\Re(\hat{S}^+_{J,M}(n) )| \le  \frac{1}{M+1} +\min\Big\{\delta, \frac{1}{\pi n} \Big\}.
$$
\end{proposition}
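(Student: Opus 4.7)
The plan is to assemble Proposition \ref{Sel-poly} directly from the facts already recorded in the preceding paragraph about the Selberg polynomial $S^+_{J,M}(x)$; the entire statement is essentially a symmetrised/repackaged form of those facts, so no substantial new work is needed beyond a clean bookkeeping of constants.

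First, I would apply the pointwise majorisation $\chi_J(x)\le S^+_{J,M}(x)$ at the two arguments $x=\theta/(2\pi)$ and $x=-\theta/(2\pi)$ (interpreting both sides as $1$-periodic functions on $\Bbb{R}$, as the paragraph instructs). Averaging the two inequalities yields
\[
\mathcal{I}_\delta(\theta) \;\le\; \tfrac{1}{2}\bigl(S^+_{J,M}(\theta/(2\pi)) + S^+_{J,M}(-\theta/(2\pi))\bigr).
\]
Next, I would insert the Fourier identity already derived in the setup, namely
\[
S^+_{J,M}(x)+S^+_{J,M}(-x)\;=\;2\delta+\frac{2}{M+1}+4\sum_{n=1}^{M}\Re\bigl(\hat{S}^+_{J,M}(n)\bigr)\cos(2\pi n x),
\]
with $x=\theta/(2\pi)$. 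Dividing by $2$ produces exactly the right-hand side claimed in the proposition, giving the stated upper bound for $\mathcal{I}_\delta(\theta)$.

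For the Fourier-coefficient estimate, I would simply invoke the coefficient bound already quoted from \cite[pp.~6--8]{Mon94}: for $1\le|n|\le M$,
\[
\bigl|\hat{S}^+_{J,M}(n)\bigr|\;\le\;\frac{1}{M+1}+\min\!\Big\{\delta,\frac{1}{\pi|n|}\Big\},
\]
and observe that $|\Re(z)|\le |z|$ for any complex $z$, which yields the claimed inequality on $|\Re(\hat{S}^+_{J,M}(n))|$.

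Since each ingredient (the majorisation $\chi_J\le S^+_{J,M}$, the Fourier identity for $S^+_{J,M}(x)+S^+_{J,M}(-x)$, and the coefficient bound) is already isolated in the preceding exposition, there is no genuine obstacle; the only care required is to keep track of the factor $\tfrac{1}{2}$ arising from the symmetrisation so that the constants $\delta+\tfrac{1}{M+1}$ and the coefficient $2\Re(\hat{S}^+_{J,M}(n))$ appear with the correct normalisation in the final inequality.
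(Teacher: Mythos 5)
Your argument is correct and is exactly the paper's: the proposition is just the assembly of the majorisation $\chi_J\le S^+_{J,M}$, the symmetrised Fourier identity for $S^+_{J,M}(x)+S^+_{J,M}(-x)$, and Montgomery's coefficient bound together with $|\Re(z)|\le|z|$, with the same normalisation by the factor $\tfrac12$. No gap; your proof matches the paper's derivation.
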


\section{Proofs of Theorems \ref{Main-dist} and \ref{main-thm-non-twist-equiv}, part I: both $\pi_1$ and $\pi_2$ are dihedral}\label{Dih}

In this section, we shall prove Theorems \ref{Main-dist} and \ref{main-thm-non-twist-equiv} for the case that both $\pi_1$ and $\pi_2$ are dihedral. We will emphasise the case that $\pi_1$ and $\pi_2$ are twist-equivalent. (The case that $\alpha=0$ was not treated in \cite{Wa14} as the bound established by Ramakrishnan \cite{Rama94} is already sharp.)

Recall that every dihedral cuspidal automorphic representation $\pi$ for $\GL_2(\Bbb{A}_F )$  can be induced from a Hecke character $\psi$ of $K$ for some quadratic extension $K$ of $F$. For such an instance, we shall write $\pi = I_K^F(\psi)$.  Following Walji \cite[p. 4995]{Wa14}, we  say that a dihedral cuspidal automorphic representation $\pi$ for $\GL_2(\Bbb{A}_F )$  has property $\mathcal{P}$ if   $\pi=I_K^F(\psi)$, and the Hecke character $\psi/\psi^{\tau}$ is invariant under $\tau$, the non-trivial element of $\Gal(K/F)$. We also recall that $\pi_1\boxtimes \pi_2$ is cuspidal if and only if $\pi_1$ and $\pi_2$ cannot be induced from the same quadratic extension of $F$.

In this section, we let $\pi_1$ and $\pi_2$ be dihedral cuspidal automorphic representations for $\GL_2(\Bbb{A}_F )$ induced from $\nu$ and $\mu$ of quadratic extensions $K_1$ and $K_2$ of $F$, respectively. In addition, we let $\chi_i$ be the Hecke character associated to $K_i/F$ and let $\tau_i$ be the non-trivial element of $\Gal(K_i/F)$. Besides, if $\pi_1$ and $\pi_2$ can be induced from the same quadratic extension $K$ of $F$, then we shall choose $K_1=K_2=K$ and define $\chi=\chi_1=\chi_2$ and $\tau=\tau_1=\tau_2$.

 We shall argue according to whether $\pi_i$ has property $\mathcal{P}$  for each $i$.

\subsection{Exactly one of $\pi_1$ and $\pi_2$ has property $\mathcal{P}$}
 Assume that exactly one of the dihedral automorphic representations $\pi_1$ and $\pi_2$ satisfies property $\mathcal{P}$. Without loss of generality, we consider the case that $\pi_1 =I_K^F(\nu)$ has property $\mathcal{P}$ (i.e., $(\nu/\nu^{\tau_1})^{\tau_1} = \nu/\nu^{\tau_1}$, where $\tau_1$ is the non-trivial element of $\Gal(K_1/F)$). 
 
\subsubsection{$\pi_1$ and $\pi_2$ cannot be induced from the same quadratic extension of $F$}\label{OX-not-sameK}
We first consider that case that $\pi_1$ and $\pi_2$ cannot be induced from the same quadratic extension of $F$. (This case was not discussed in \cite[p. 4998]{Wa14}. Nonetheless,  the bound for this case is better than the one for the case that  $\pi_1$ and $\pi_2$ can be induced from the same quadratic extension.)

Since $\pi_1$ has property $\mathcal{P}$, we have
\begin{equation}\label{decomp-pi1-P}
\pi_1\boxtimes\bar{\pi}_1 \simeq 1 \boxplus \chi_1 \boxplus \nu/\nu^{\tau_1} \boxplus (\nu/\nu^{\tau_1})\chi_1 ,
\end{equation}
where $\chi_1$ is the Hecke character associated to $K_1/F$. Also, as $\pi_1\boxtimes\bar{\pi}_1\simeq 1\boxplus \Ad(\pi_1)$, we see that
\begin{equation}\label{Ad-decomp-pi1-P}
 \Ad(\pi_1) \simeq  \chi_1 \boxplus \nu/\nu^{\tau_1} \boxplus  (\nu/\nu^{\tau_1})\chi_1.
\end{equation}
On the other hand, since  $\pi_2$ does not have property $\mathcal{P}$, 
\begin{equation}\label{decomp-pi2-NP}
\pi_2\boxtimes\bar{\pi}_2 \simeq 1 \boxplus \chi_2 \boxplus I_{K_2}^F(\mu/\mu^{\tau_2}),
\end{equation}
where $\chi_2$ is the Hecke character associated to $K_2/F$, and $I_{K_2}^F(\mu/\mu^{\tau_2})$ is cuspidal. Thus,
\begin{equation}\label{Ad-decomp-pi2-NP}
 \Ad(\pi_2) \simeq  \chi_2  \boxplus I_{K_2}^F(\mu/\mu^{\tau_2}).
\end{equation}
Since each $\Ad(\pi_i)$ is self-dual, we know that $L(s,\Ad(\pi_1) \times   \Ad(\pi_1))$ has a pole of order three at $s=1$ and that $L(s,\Ad(\pi_2) \times   \Ad(\pi_2))$ has a pole of order two at $s=1$. (Note that $ \nu/\nu^{\tau_1} \not\simeq \chi_1 $; otherwise, $\pi_1\simeq \nu\boxplus \nu\chi_1$, which contradicts to the cuspidality of $\pi_1$.) Moreover, in this case, $\pi_1\boxtimes \pi_2$ is cuspidal. Thus, from the identity
$$
L(s, ( 1\boxplus\Ad(\pi_1) )\times  (1 \boxplus \Ad(\pi_2) )) = L(s, \pi_1 \boxtimes \bar{\pi}_1 \times \pi_2 \boxtimes \bar{\pi}_2 ) = L(s, \pi_1 \boxtimes \pi_2 \times \bar{\pi}_1\boxtimes \bar{\pi}_2),
$$
we see that $L(s,  \Ad(\pi_1) \times   \Ad(\pi_2) )$ is holomorphic  at $s=1$.

Since  the Ramanujan-Petersson conjecture holds for $\pi_1$ and $\pi_2$ (as they are dihedral), we have $|\lambda_{\pi_i}(v)|^2\le 4$. Recalling that for each $i$,
\begin{equation}\label{Ad-square}
\lambda_{ \Ad(\pi_i)}(v)=  |\lambda_{\pi_i}(v)|^2 -1,
\end{equation}
we derive
$$
|\lambda_{ \Ad(\pi_1)}(v) - \lambda_{\Ad(\pi_2)}(v) | =| |\lambda_{\pi_1}(v)|^2 -1 - |\lambda_{\pi_2}(v)|^2 +1|  \le 4
$$ 
and thus 
\begin{equation}\label{Ad-ineq}
\sum_{v} \frac{|\lambda_{ \Ad(\pi_1)}(v) - \lambda_{\Ad(\pi_2)}(v) |^2}{Nv^s}=\sum_{v} \frac{|\lambda_{ \Ad(\pi_1)}(v) - \lambda_{\Ad(\pi_2)}(v) |^2 \chi_{\Ad}(v)}{Nv^s} \le  16  \sum_{v\in \mathcal{S}_{\Ad}} \frac{1}{Nv^s},
\end{equation}
where
\begin{equation}\label{SAd-def}
\mathcal{S}_{\Ad}= \mathcal{S}_{\Ad}(\pi_1,\pi_2) = \{\text{$v$ unramified for both $\pi_1$ and $\pi_2$} \mid \lambda_{\Ad(\pi_1)}(v)\neq  \lambda_{\Ad(\pi_2)}(v) \} .
\end{equation}
Note that as each  $\Ad(\pi_i)$ is self-dual, for any unramified $v$,
$$
|\lambda_{ \Ad(\pi_1)}(v) - \lambda_{\Ad(\pi_2)}(v) |^2 = \lambda_{ \Ad(\pi_1)\times \Ad(\pi_1) }(v) -2 \lambda_{ \Ad(\pi_1)\times \Ad(\pi_2)}(v) + \lambda_{ \Ad(\pi_2)\times\Ad(\pi_2)}(v).
$$

Now, dividing both sides of \eqref{Ad-ineq} by $\log(\frac{1}{s-1})$ and making real $s\rightarrow 1^+$, 
we deduce that $5= 3+2 \le  16 \lD(\mathcal{S}_{\Ad})$. By \eqref{Ad-square}, we know that if $\lambda_{ \Ad(\pi_1)}(v)\neq \lambda_{ \Ad(\pi_2)}(v)$, then $  |\lambda_{\pi_1}(v)|\neq   |\lambda_{\pi_2}(v)|$. Thus, for any $\alpha$, $\mathcal{S}_{\Ad} \subseteq  \mcS_\alpha$, and we obtain
$$
 \frac{5}{16}\le     \lD(\mathcal{S}_{\Ad}) \le \lD(\mcS_\alpha).
$$ 
 
\subsubsection{$\pi_1$ and $\pi_2$ can be induced from the same quadratic extension of $F$} 
 \label{OX-sameK}
Suppose that  $\pi_1$ and $\pi_2$ can be induced from the same quadratic extension $K$ of $F$.   As argued in \cite[p. 4998]{Wa14}, for any prime $\omega$ of $K$, we have $\nu(\omega) = \pm \nu^{\tau} (\omega)$, and thus for any prime $v$ of $F$, $|\lambda_{\pi_1}(v)|=|\lambda_{I_K^F(\nu)}(v)|$ equals  either 0 or 2. 

Write $\pi_2= I_K^F(\mu)$ as before. As $(\mu/\mu^{\tau})^{\tau} \neq \mu/\mu^{\tau}$,   $\mu^2/(\mu^{\tau})^2$ is non-trivial, and thus there is a set $S$  of density $1/4$ consisting of primes $v$ of $F$  that split in $K$ such that $\mu^2/(\mu^{\tau})^2(\omega)\neq 1$ for $\omega\mid v$. Hence, if $\omega\mid v$ for some $v\in S$, then $\mu(\omega)\neq \mu^{\tau}(\omega)$, which implies that  $|\lambda_{\pi_2}(v)|=|\lambda_{I_K^F(\mu)}(v)|$ is not equal to  0 nor 2. Therefore, for any $v\in S$,  $|\lambda_{\pi_1}(v)|\neq |\lambda_{\pi_2}(v)|$
and thus $|\lambda_{ \Ad(\pi_1)}(v)|\neq|\lambda_{\Ad(\pi_2)}(v) |$ for any $v\in S$. In particular, for any $v\in S$ and any $\alpha\in \Bbb{R}$, 
$$
\lambda_{\pi_1}(v)\neq e^{i\alpha} \lambda_{\pi_2}(v).
$$
From the above discussion, we see that 
$$
\frac{1}{4}\le  \lD(S)= \lD(\mcS_{\Ad}) \le  \lD(\mcS_\alpha).
$$ 
whenever exactly one of $\pi_1$ and $\pi_2$ satisfies property $\mathcal{P}$.

\begin{remark}
If we argue as in the previous section, as $L(s,\Ad(\pi_1)\times \Ad(\pi_2))$ has a pole of order one at $s=1$, we will have a slightly weaker lower bound:
$$
\lD(\mcS_{\Ad})\ge \frac{3-2+2}{16}= \frac{3}{16}
$$
for the case that $\pi_1$ and $\pi_2$ can be induced from the same quadratic extension of $F$.
\end{remark}

\subsection{Both $\pi_1$ and $\pi_2$ do not have property $\mathcal{P}$}\label{XX}

\subsubsection{$\pi_1$ and $\pi_2$ are twist-equivalent} Assume that both $\pi_1$ and $\pi_2$ do not have property $\mathcal{P}$ and that  $\pi_1$ and $\pi_2$ are twist-equivalent. Note that
\begin{equation}\label{decomp-pi1-NP}
\pi_1\boxtimes\bar{\pi}_1 \simeq 1 \boxplus \chi_1 \boxplus I_{K_1}^F(\nu/\nu^{\tau_1}),
\end{equation}
where $\chi_1$ is the quadratic Hecke character associated to $K_1/F$, and  $I_{K_1}^F(\nu/\nu^{\tau_1})$ is cuspidal. From  \eqref{decomp-pi1-NP}, it follows that
$$
L(s, \pi_1\boxtimes \bar{\pi}_1 \times  \pi_1\boxtimes \bar{\pi}_1 )= L(s,(1 \boxplus \chi_1 \boxplus I_{K_1}^F(\nu/\nu^{\tau_1}))
\times ( \overline{ 1 \boxplus \chi_1 \boxplus I_{K_1}^F(\nu/\nu^{\tau_1})}  ))
$$
has a pole of order three at $s=1$. Also, as $\Ad(\pi_1)\simeq \Ad(\pi_2)$,  $\pi_2\boxtimes \bar{\pi}_2 \simeq \pi_1\boxtimes \bar{\pi}_1 $, and thus
$$
L(s, \pi_2\boxtimes \bar{\pi}_2 \times  \pi_2\boxtimes \bar{\pi}_2 )= L(s, \pi_1\boxtimes \bar{\pi}_1 \times  \pi_2\boxtimes \bar{\pi}_2 )= L(s, \pi_1\boxtimes \bar{\pi}_1 \times  \pi_1\boxtimes \bar{\pi}_1 )
$$
has a pole of order  three at $s=1$. 
 
Now, we consider the $L$-function $L(s, \pi_1\boxtimes \pi_1 \times  \bar{ \pi}_1\boxtimes \bar{ \pi}_2 ) $. Writing $\pi_2 \simeq \pi_1\otimes \chi $ for some (non-trivial) id\`ele class character $\chi$, we deduce
\begin{align*}
L(s, \pi_1\boxtimes \pi_1 \times  \bar{ \pi}_1\boxtimes \bar{ \pi}_2 ) 
&= L(s, \pi_1 \boxtimes   \bar{ \pi}_1 \times  \pi_1 \boxtimes  ( \bar{ \pi}_1\otimes \bar{\chi} ) )\\
&=L(s,(1 \boxplus \chi_1 \boxplus I_{K_1}^F(\nu/\nu^{\tau_1}))\times (\bar{ \chi} \boxplus \chi_1\bar{ \chi} \boxplus I_{K_1}^F(\nu/\nu^{\tau_1})\otimes \bar{ \chi}))
\end{align*}
has a pole of order at most three at $s=1$. Similarly, $L(s, \pi_1\boxtimes \pi_1 \times   \bar{ \pi}_2\boxtimes \bar{ \pi}_2 ) 
= L(s, \pi_1 \boxtimes   \bar{ \pi}_1 \times  \pi_1 \boxtimes  ( \bar{ \pi}_1\otimes \bar{\chi}^2 ) )$ has a pole of order at most three at $s=1$.

As  $\delta_{i,j,k,l}\le 3$ for all cases, we derive
$$
2^2\le  \begin{cases} 
(3+ 6\cos(2\alpha) + 3 +12  ) \lD(\mathcal{S}_\alpha)  & \text{if $\cos(2\alpha)\ge 0$ and $\cos\alpha\ge 0$;}\\
(3+ 6\cos(2\alpha)  + 3 +12  - 24\cos\alpha) \lD(\mathcal{S}_\alpha)  & \text{if $\cos(2\alpha)\ge 0$ and $\cos\alpha\le 0$;}\\
(3+ 3 +12  ) \lD(\mathcal{S}_\alpha)  & \text{if $\cos(2\alpha)\le 0$ and $\cos\alpha\ge 0$;}\\
(3 + 3 +12 - 24\cos\alpha ) \lD(\mathcal{S}_\alpha)  & \text{if $\cos(2\alpha)\le 0$ and $\cos\alpha\le 0$.}
 \end{cases}
$$

\subsubsection{$\pi_1$ and $\pi_2$ are not twist-equivalent}\label{1/8} Now, we consider the situation that $\pi_1$ and $\pi_2$ do not have property $\mathcal{P}$, and  $\pi_1$ and $\pi_2$ are not twist-equivalent. Similar to \eqref{Ad-decomp-pi2-NP}, by \eqref{decomp-pi1-NP}, we have
\begin{equation}\label{Ad-decomp-pi1-NP}
 \Ad(\pi_1) \simeq  \chi_1  \boxplus I_{K_1}^F(\nu/\nu^{\tau_1}). 
\end{equation}
Recalling that $I_{K_1}^F(\nu/\nu^{\tau_1})$ and $I_{K_2}^F(\mu/\mu^{\tau_2})$ are cuspidal, we note that  as $\Ad(\pi_1)\not\simeq \Ad(\pi_2)$ and $\Ad(\pi_2)$ is self-dual,   $\Ad(\pi_1)\not\simeq  \overline{\Ad(\pi_2)}$ and thus either $\bar{\chi}_2\not\simeq \chi_1$ or $\overline{I_{K_2}^F(\mu/\mu^{\tau_2})}\not\simeq I_{K_1}^F(\nu/\nu^{\tau_1})$.  Hence, by \eqref{Ad-decomp-pi2-NP} and \eqref{Ad-decomp-pi1-NP}, we see that
$$
L(s,\Ad(\pi_1)\times \Ad(\pi_2))= L(s, (\chi_1 \boxplus I_{K_1}^F(\nu/\nu^{\tau_1}))\times (\chi_2 \boxplus I_{K_2}^F(\mu/\mu^{\tau_2})))
$$
has a pole of order at most one at $s=1$. Also, from \eqref{Ad-decomp-pi2-NP} and \eqref{Ad-decomp-pi1-NP}, it follows immediately that for each $i$, $L(s,\Ad(\pi_i)\times \Ad(\pi_i))= L(s,\Ad(\pi_i)\times  \overline{\Ad(\pi_i)})$ has a pole of order two at $s=1$. Therefore,  by the estimate \eqref{Ad-ineq}, we obtain
$$
2= 2-2+2 \le  16 \lD(\mathcal{S}_{\Ad}),
$$
where $\mathcal{S}_{\Ad}$ is defined as in \eqref{SAd-def}. Since for any $\alpha$, $\mathcal{S}_{\Ad} \subseteq  \mcS_\alpha$,  we obtain
$$
 \frac{1}{8}\le     \lD(\mathcal{S}_{\Ad}) \le \lD(\mcS_\alpha).
$$

This can be further improved for $\mcS_\alpha$ by  Walji's argument.  It was shown in \cite[pp. 4995-4996]{Wa14} that 
$$
\delta_{i,j,k,l}  \le 
\begin{cases}
 1 & \text{if $(i,j,k,l)\in\{ (2, 1, 0, 1), (0, 1, 2, 1), (1, 2, 1, 0), (1, 0, 1, 2)\}$;}\\
 2 & \text{if $(i,j,k,l)\in\{ (1,1,1,1), (2, 0, 0, 2), (0, 2, 2, 0) \}$;}\\
 3 & \text{if $(i,j,k,l)\in\{(2, 2, 0, 0), (0, 0, 2, 2 \}$.}
\end{cases}
$$
Hence,  we have
$$
2^2 \le
\begin{cases}
 (3 + 4 \cos(2\alpha) +3  + 8 )     \lD(\mcS_\alpha) & \text{if $\cos(2\alpha)\ge 0$ and $\cos\alpha\ge 0$;}\\
 (3 + 4 \cos(2\alpha) +3  + 8 -8 \cos \alpha)  \lD(\mcS_\alpha)  & \text{if  $\cos(2\alpha)\ge 0$ and $\cos\alpha \le 0$;}\\
   (3  +3  + 8 )  \lD(\mcS_\alpha)  & \text{if  $\cos(2\alpha)\le 0$ and $\cos\alpha \ge 0$;}\\
  (3   +3  + 8 -8 \cos \alpha)  \lD(\mcS_\alpha)  & \text{if  $\cos(2\alpha)\le 0$ and $\cos\alpha \le 0$.}
\end{cases}
$$
Thus, for any $\alpha\in\Bbb{R}$, $\lD(\mcS_\alpha) \ge  \frac{2}{13}$ (which is $>\frac{1}{8}$). (We note that although our method gives a worse bound than the one given by Walji's argument, it will be useful in Section \ref{proof-lD_S*}.)

\subsection{Both $\pi_1$ and $\pi_2$ have property $\mathcal{P}$}

\subsubsection{$\pi_1$ and $\pi_2$ are twist-equivalent} We move on to the case of twist-equivalent $\pi_1$ and $\pi_2$ that satisfy property $\mathcal{P}$. We recall that in Section \ref{OX-not-sameK}, we showed that
$$
1 \boxplus \Ad(\pi_1) \simeq
 \pi_1\boxtimes\bar{\pi}_1 \simeq 1 \boxplus \chi_1 \boxplus \nu/\nu^{\tau_1} \boxtimes  (\nu/\nu^{\tau_1})\chi_1 
$$
and that
$L(s,\Ad(\pi_1)\times \Ad(\pi_1))$ has a pole of order three at $s=1$. Thus,
$$
L(s, \pi_1\boxtimes\bar{\pi}_1 \times \pi_1\boxtimes\bar{\pi}_1)=L(s,1)L(s,\Ad(\pi_1))^2L(s,\Ad(\pi_1)\times \Ad(\pi_1) )
$$
has a pole of order four at $s=1$. Since  $\pi_1 \simeq \pi_2 \otimes \chi$
for some (non-trivial) Hecke character, we know that
$$
L(s, \pi_1\boxtimes\bar{\pi}_1 \times \pi_1\boxtimes\bar{\pi}_1)= L(s, \pi_1\boxtimes\bar{\pi}_1 \times \pi_2\boxtimes\bar{\pi}_2) =  L(s, \pi_2\boxtimes\bar{\pi}_2 \times \pi_2\boxtimes\bar{\pi}_2)
$$
and thus $\delta_{i,j,k,l} = 4$ if $(i,j,k,l)\in\{  (1, 1, 1, 1), (2, 2, 0, 0), (0, 0, 2, 2) \}$. 

Moreover, $L(s, \pi_1\boxtimes \pi_1 \times \bar{ \pi}_2\boxtimes\bar{\pi}_2) 
=L(s, \pi_1\boxtimes ( \bar{ \pi}_1\otimes \bar{\chi} ) \times \pi_1\boxtimes( \bar{ \pi}_1\otimes \bar{\chi} ) )$ equals 
\begin{align*}
 L(s, (\bar{\chi} \boxplus \chi_1\bar{\chi} \boxplus (\nu/\nu^{\tau_1})\bar{\chi} \boxplus (\nu/\nu^{\tau_1})\chi_1\bar{\chi} ) \times  (\bar{\chi} \boxplus \chi_1\bar{\chi} \boxplus (\nu/\nu^{\tau_1})\bar{\chi} \boxplus  (\nu/\nu^{\tau_1})\chi_1\bar{\chi} )),
\end{align*}
which has a pole of order at most four at $s=1$ (note that $\bar{\chi}$,  $\chi_1\bar{\chi}$, $(\nu/\nu^{\tau_1})\bar{\chi}$, and $(\nu/\nu^{\tau_1})\chi_1\bar{\chi}$ are pairwise distinct). 
Similarly, we know that  $L(s, \pi_1\boxtimes \pi_1 \times  \bar{ \pi}_1\boxtimes \bar{ \pi}_2 ) 
= L(s, \pi_1 \boxtimes   \bar{ \pi}_1 \times   \pi_1 \boxtimes ( \bar{ \pi}_1\otimes \bar{\chi} ) )$ is equal to
\begin{align*}
L(s, (1 \boxplus \chi_1 \boxplus \nu/\nu^{\tau_1} \boxplus  (\nu/\nu^{\tau_1})\chi_1  ) \times  (\bar{\chi} \boxplus \chi_1\bar{\chi} \boxplus (\nu/\nu^{\tau_1})\bar{\chi} \boxplus  (\nu/\nu^{\tau_1})\chi_1\bar{\chi} )),
\end{align*}
which  has a pole of order at most four at $s=1$.

By the above discussion,  $\delta_{i,j,k,l} \le 4$ for all cases, and thus 
$$
2^2 \le
\begin{cases}
( 4 +8\cos(2\alpha) +4 +16 )     \lD(\mcS_\alpha) & \text{if $\cos(2\alpha)\ge 0$ and $\cos\alpha\ge 0$;}\\
( 4 +8\cos(2\alpha) +4 +16  -32 \cos \alpha)  \lD(\mcS_\alpha)  & \text{if  $\cos(2\alpha)\ge 0$ and $\cos\alpha \le 0$;}\\
  ( 4  +4 +16 ) \lD(\mcS_\alpha)  & \text{if  $\cos(2\alpha)\le 0$ and $\cos\alpha \ge 0$;}\\
( 4  +4 +16  -32 \cos \alpha)   \lD(\mcS_\alpha)  & \text{if  $\cos(2\alpha)\le 0$ and $\cos\alpha \le 0$.}
\end{cases}
$$
It is evident that it gives a worse lower bound for $ \lD(\mcS_\alpha)$ than most cases  given in Section \ref{XX} (except for Section \ref{1/8}, where  $\frac{1}{8}$ appears).

\subsubsection{$\pi_1$ and $\pi_2$ are not twist-equivalent} To end this section, we analyse the case of  non-twist-equivalent $\pi_1$ and $\pi_2$ that satisfy property $\mathcal{P}$.  We first recall that by \cite[Lemma 6]{Wa14},  Walji showed that both $L(s, \Ad(\pi_1)\times \Ad(\pi_1))$ and $L(s, \Ad(\pi_2)\times \Ad(\pi_2))$ have a pole of order three at $s = 1$; also, if $\pi_1$ and $\pi_2$ can be  induced from $K/F$,  $L(s, \Ad(\pi_1)\times \Ad(\pi_2))$ have a pole of order one at $s = 1$. 

In addition, if $\pi_1$ and $\pi_2$ cannot be induced from the same quadratic extension, then $\pi_1 \boxtimes \pi_2$ and $\bar{\pi}_1\boxtimes \bar{\pi}_2$ are cuspidal. So $L(s, \pi_1 \boxtimes \pi_2 \times \bar{\pi}_1\boxtimes \bar{\pi}_2)$ has a pole of order one  at $s=1$. As argued in  Section \ref{OX-not-sameK}, 
we deduce that $L(s,  \Ad(\pi_1) \times   \Ad(\pi_2) )$ is holomorphic  at $s=1$. (We note that this case was not discussed in \cite[pp. 4997-4998]{Wa14}.)

Hence,  from  \eqref{Ad-ineq}, it follows that if $\pi_1$ and $\pi_2$ are induced from the same quadratic extension of $F$,  
$$
\lD(\mathcal{S}_{\Ad})\ge \frac{3-2+3}{16} =\frac{1}{4},
$$
where $\mathcal{S}_{\Ad}$ is defined as in \eqref{SAd-def}; otherwise,
$$
\lD(\mathcal{S}_{\Ad})\ge \frac{3+3}{16} =\frac{3}{8}.
$$
In particular, for any $\alpha$, as $\mathcal{S}_{\Ad} \subseteq  \mcS_\alpha$, $\lD(\mcS_\alpha)\ge \frac{1}{4}$.

\section{Proofs of Theorems \ref{Main-dist} and \ref{main-thm-non-twist-equiv}, part II: at least one of $\pi_1$ and $\pi_2$ is non-dihedral}\label{one-non-dih}

In this section, we shall complete the proofs of Theorems  \ref{Main-dist} and \ref{main-thm-non-twist-equiv} by analysing the case that at least one of $\pi_1$ and $\pi_2$ is non-dihedral.

\subsection{Exactly one of $\pi_1$ and $\pi_2$ is dihedral} If $\pi_1$ is not dihedral but $\pi_2$ is dihedral, then using \eqref{n-d&d} and the fact that $\delta_{2, 0, 0, 2} =\delta_{0,2,2,0} $ and  arguing similarly as before, we obtain
$$
2^2 
 \le
 (2 + 2 \delta_{2, 0, 0, 2}\cos(2\alpha) +4  + 4 ) \lD(\mcS_\alpha)
\le 
\begin{cases}
(10 + 4 \cos(2\alpha)  )    \lD(\mcS_\alpha)  & \text{if $\cos(2\alpha)\ge 0$;}\\
10    \lD(\mcS_\alpha)  & \text{if $\cos(2\alpha)\le 0$,}
\end{cases}
$$
 and thus
$$
\lD(\mcS_\alpha) \ge
\begin{cases}
\frac{2}{ 5 +2   \cos (2\alpha)  }  & \text{if $\cos(2\alpha)\ge 0$;}\\
\frac{2}{5} & \text{if $\cos(2\alpha)\le 0$.}
\end{cases}
$$
(By the symmetry, the bound also holds if $\pi_1$ is  dihedral and $\pi_2$ is not dihedral.)

\subsection{Both $\pi_1$ and $\pi_2$  are  non-dihedral}

\subsubsection{Both $\pi_1$ and $\pi_2$  are  non-tetrahedral}  If both $\pi_1$ and $\pi_2$  are  non-tetrahedral, from \eqref{both-non-te},  we see that
$$
2^2 \le (2 + 2 \delta_{2, 0, 0, 2} \cos(2 \alpha) +2  + 8 )  \lD(\mcS_\alpha)
$$
and hence
$$
 \lD(\mcS_\alpha)\ge
\begin{cases}
  \frac{1}{3 + \cos (2\alpha)    }      & \text{if $\cos (2\alpha) \ge 0$;}\\
 \frac{1}{3} & \text{if  $\cos (2\alpha) \le 0$.}
\end{cases}
$$
 
\subsubsection{At least one of $\pi_1$ and $\pi_2$ is tetrahedral}  If at least one of $\pi_1$ and $\pi_2$ is tetrahedral, then applying \eqref{one-te}, we have
\begin{align*}
 \begin{split}
2^2 
&\le (2 + 2 \delta_{2, 0, 0, 2} \cos (2\alpha) +2  + 8  - 8 \kappa_1 \cos \alpha) \   \lD(\mcS_\alpha) \\
&\le
\begin{cases}
 (12 +4 \cos (2\alpha)   - 8 \kappa_1 \cos \alpha )   \lD(\mcS_\alpha) & \text{if $\cos (2\alpha) \ge 0$;}\\
 (12    - 8 \kappa_1 \cos \alpha) \lD(\mcS_\alpha) & \text{if  $\cos (2\alpha) \le 0$.}
\end{cases}
 \end{split} 
\end{align*}
Comparing  this with the previous case, we complete the proof for the second part of Theorem \ref{Main-dist}.
 
\subsubsection{$\pi_1$ and $\pi_2$ are not twist-equivalent} If  $\pi_1$ and $\pi_2$ are not twist-equivalent, then from \eqref{n-t-e}, it follows that
$$
2^2 \le (2 + 2\kappa_2 \cos (2\alpha) +2  + 4)   \lD(\mcS_\alpha) 
\le   (8 + 2\kappa_2 \cos (2\alpha) )   \lD(\mcS_\alpha) 
$$
 and thus
$$
\lD(\mcS_\alpha)\ge \frac{2}{ 4+ \kappa_2 \cos (2\alpha)},
$$  
 which completes the proof of Theorem \ref{main-thm-non-twist-equiv}.

\section{Proofs of Theorems \ref{lD_S*} and \ref{S''}} \label{proof-lD_S*}

In this section, we shall prove Theorems \ref{lD_S*} and \ref{S''}. Since the proof of Theorem  \ref{S''} follows the same line, we shall focus on the proof of Theorem  \ref{lD_S*} and then discuss how to modify it to prove  Theorem \ref{S''} at the end of this section.

Our key observation is that $\mcS_{\Ad}$ and 
$$
\mcS_* =\mcS_* (\pi_1,\pi_2) = \{\text{$v$ unramified for both $\pi_1$ and $\pi_2$} \mid |\lambda_{\pi_1}(v)|\neq  |\lambda_{\pi_2}(v)| \}
$$
are exactly the same (which follows from the identity that $|\lambda_{\pi_i}(v)|^2= \lambda_{\pi_i\times \bar{\pi}_i}(v)= \lambda_{\Ad(\pi_i)}(v) +1 $ for unramified $v$). As the   case that both $\pi_1$ and $\pi_2$ are dihedral is already proved in  Section \ref{Dih}, we shall only consider the situation that at least one of $\pi_1$ and $\pi_2$ is non-dihedral. 

We shall require the following proposition, which is a consequence of the work of Kim and  Shahidi \cite{KS02}, concerning the orders of poles of certain $L$-functions at $s=1$.

\begin{proposition}\label{key-prop}
Let $\pi$, $\pi_1$, and $\pi_2$   be   non-dihedral cuspidal automorphic representations for $\GL_2(\Bbb{A}_F )$ with unitary central characters $\omega$, $\omega_1$, and $\omega_2$, respectively.  Suppose that $\pi_1$ and $\pi_2$ are not twist-equivalent. Then we have
\begin{equation}\label{4Pi}
-\ord_{s=1} L(s, \Pi\times\Pi\times \Pi\times \Pi)
=  \begin{cases} 
 7  & \text{if $\pi$ is  tetrahedral;}\\
 4  & \text{if $\pi$ is octahedral;}\\
 3  & \text{if $\pi$ is not solvable polyhedral,}
 \end{cases}
\end{equation}
where $\Pi=\Ad(\pi)$.\footnote{A cuspidal automorphic representation $\pi$ for $\GL_2(\Bbb{A}_F )$ is called octahedral if it is non-dihedral and non-tetrahedral, and $\Sym^3\pi$ admits a non-trivial self-twist by a Hecke character; $\pi$ is called solvable polyhedral if it is either  dihedral, tetrahedral, or octahedral.}
Also, if $\pi_1$ is   not solvable polyhedral, then we have 
\begin{equation}\label{2Pii}
-\ord_{s=1} L(s, \Pi_1\times\Pi_1\times \Pi_2\times \Pi_2)
\le  \begin{cases} 
 1  & \text{if  $\pi_2$ is   tetrahedral;}\\
 2  & \text{otherwise,}
 \end{cases}
\end{equation}
where $\Pi_i= \Ad(\pi_i)$ for each $i$.
\end{proposition}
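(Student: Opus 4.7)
My plan is to reduce both \eqref{4Pi} and \eqref{2Pii} to a combinatorial count over the isobaric decomposition of $\Pi\boxtimes\Pi$ (respectively $\Pi_1\boxtimes\Pi_1$ and $\Pi_2\boxtimes\Pi_2$), and then to invoke the Kim--Shahidi theory of symmetric-power lifts to pin down that decomposition. Using $\Pi \simeq \Sym^2\pi \otimes \omega^{-1}$ together with the $\GL_2$ Clebsch--Gordan identity
\[
\Sym^2\pi \boxtimes \Sym^2\pi \simeq \Sym^4\pi \boxplus (\Sym^2\pi \otimes \omega) \boxplus \omega^2,
\]
which is automorphic by Kim's theorem on $\Sym^4\pi$, I first derive
\[
\Pi \boxtimes \Pi \simeq \sigma \boxplus \Pi \boxplus 1, \qquad \sigma := \Sym^4\pi \otimes \omega^{-2},
\]
and note that $\sigma$ is self-dual since $\overline{\Sym^4\pi} \simeq \Sym^4\pi \otimes \omega^{-4}$. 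The pole order of $L(s, \Pi\times\Pi\times\Pi\times\Pi) = L(s, (\Pi\boxtimes\Pi) \times (\Pi\boxtimes\Pi))$ at $s=1$ then factors over Rankin--Selberg $L$-functions of the cuspidal constituents of $\Pi\boxtimes\Pi$ and, by Jacquet--Shalika, equals the number of ordered pairs $(X_i, Y_j)$ of such constituents with $Y_j \simeq \bar X_i$.

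I next apply the Kim--Shahidi cuspidality criterion for $\Sym^4\pi$. When $\pi$ is not solvable polyhedral, $\sigma$ is cuspidal and $\Pi\boxtimes\Pi$ has three distinct self-dual constituents, yielding $3$ matched pairs. When $\pi$ is octahedral, $\sigma \simeq \tau_3 \boxplus \tau_2$ with $\tau_3,\tau_2$ self-dual cuspidals of dimensions $3$ and $2$; moreover $\tau_3 \not\simeq \Pi$ because $\Pi$ admits no non-trivial self-twist away from the dihedral and tetrahedral loci, yielding four distinct self-dual constituents and $4$ pairs. When $\pi$ is tetrahedral, the cubic self-twist $\Sym^2\pi \otimes \eta \simeq \Sym^2\pi$ and a central-character computation force the explicit decomposition $\Sym^4\pi \simeq (\Sym^2\pi \otimes \omega) \boxplus \eta\omega^2 \boxplus \eta^2\omega^2$, so $\sigma \simeq \Pi \boxplus \eta \boxplus \eta^2$ and $\Pi\boxtimes\Pi \simeq \Pi \boxplus \Pi \boxplus \eta \boxplus \eta^2 \boxplus 1$; since $\overline{\eta} = \eta^2$, the count becomes $2\cdot 2 + 1 + 1 + 1 = 7$.

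For \eqref{2Pii} I apply the same decomposition to both $\Pi_i \boxtimes \Pi_i$. Since $\pi_1$ is not solvable polyhedral, $\sigma_1$ is a $5$-dimensional cuspidal. The twist-equivalence criterion ($\Pi_1 \simeq \Pi_2$ if and only if $\pi_1$ and $\pi_2$ are twist-equivalent) recalled in Section \ref{pre} rules out any match $\Pi_1 \simeq \Pi_2$. Counting matches of constituents of $\Pi_1\boxtimes\Pi_1$ against those of $\Pi_2\boxtimes\Pi_2$: the pair $(1,1)$ always contributes $1$; the $5$-dimensional $\sigma_1$ can match only a $5$-dimensional constituent of $\sigma_2$, which exists only when $\pi_2$ is also not solvable polyhedral; and $\Pi_1$ can match an extra $3$-dimensional piece of $\sigma_2$ only when $\pi_2$ is octahedral. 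When $\pi_2$ is tetrahedral, the only $3$-dimensional constituent of $\Pi_2\boxtimes\Pi_2$ is $\Pi_2$ itself (excluded by non-twist-equivalence), and no $5$-dimensional piece appears, so only the trivial pair contributes and the count is $\le 1$; in the remaining cases the count is $\le 2$.

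The main obstacle is establishing the explicit isobaric decomposition of $\Sym^4\pi$ in the tetrahedral and octahedral cases and verifying the non-coincidence $\tau_3 \not\simeq \Pi$ in the octahedral case. I plan to handle the tetrahedral case by exploiting the cubic self-twist of $\Sym^2\pi$ together with central-character bookkeeping, and the octahedral case by invoking the absence of non-trivial self-twists of $\Sym^2\pi$ (equivalently of $\Pi$) outside the dihedral and tetrahedral loci.
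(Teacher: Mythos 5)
Your proposal is correct and follows essentially the same route as the paper: decompose $\Pi\boxtimes\Pi\simeq 1\boxplus\Pi\boxplus\Sym^4\pi\otimes\omega^{-2}$ via Clebsch--Gordan, feed in Kim's automorphy of $\Sym^4\pi$ and the Kim--Shahidi cuspidality criteria, and count poles at $s=1$ through the Rankin--Selberg theory of pairings of cuspidal constituents. The only cosmetic difference is that the explicit tetrahedral decomposition and the octahedral non-coincidence $\Ad(\pi)\otimes\eta\not\simeq\Ad(\pi)$, which you propose to re-derive by self-twist and central-character arguments, are exactly the statements the paper simply quotes from Kim--Shahidi \cite{KS02}, so no new argument is required there.
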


\begin{proof}
We first note that $\Pi$ and $\Pi_i$ are self-dual cuspidal representations for $\GL_3(\Bbb{A}_F )$. As a consequence of  Clebsch-Gordon decomposition (see, e.g, \cite[Lemma 3.3]{Wa14'}), it is known  that
$$
\Ad(\Pi) \simeq  \Ad(\pi)\boxplus  \Sym^4\pi\otimes \omega^{-2},
$$
where by \cite{Kim03}, $\Sym^4\pi$ is automorphic.  We note that 1 will never appear in  the isobaric sum of $\Ad(\Pi)=\Ad(\Ad(\pi))$. Indeed, by  the identity
$$
L(s,\Pi\times \bar{ \Pi} )= L(s,1  )L(s, \Ad( \Pi) )
$$
and the fact that $\Pi=\Ad(\pi)$ is cuspidal, the theory of Rankin-Selberg $L$-functions yields that
$$ 
1= -\ord_{s=1} L(s,\Pi\times  \bar{\Pi}) = 1 -\ord_{s=1} L(s, \Ad( \Pi)),
$$
and so $ L(s, \Ad( \Pi))$ is holomorphic at $s=1$.

Moreover, we recall the cuspidality criteria established by  Kim and  Shahidi \cite[Sec 3.2 and Theorem 3.3.7]{KS02}:
\begin{enumerate}
 \item[(i)] if $\pi$ is  tetrahedral,  one has 
$$ 
\Sym^4\pi \otimes \omega^{-2} \simeq \mu \boxplus  \mu^2  \boxplus \Sym^2\pi \otimes \omega^{-1} \simeq \mu \boxplus  \mu^2  \boxplus \Ad(\pi),
$$
where $\mu$ is a (cubic) Hecke character such that $\Ad(\pi) \otimes \mu \simeq \Ad(\pi)$;

 \item[(ii)] if $\pi$ is octahedral, one has
$$ 
\Sym^4\pi \otimes \omega^{-2} \simeq  \sigma \boxplus  \Ad( \pi) \otimes \eta
$$
for some dihedral representation $\sigma$ and (quadratic) Hecke character $\eta$;

 \item[(iii)] if $\pi$ is not solvable polyhedral,  then $\Sym^4\pi$ is cuspidal (and so is $\Sym^4\pi \otimes \omega^{-2}$).
\end{enumerate}

To prove \eqref{4Pi}, we use the following decomposition of $L$-functions:
$$
L(s, \Pi\times\Pi\times \Pi\times \Pi)=L(s, 1)L(s,\Ad(\Pi))^2 L(s,\Ad(\Pi)\times \Ad(\Pi)), 
$$
where $L(s,\Ad(\Pi))$ has no pole at $s=1$, and
$$
L(s,\Ad(\Pi)\times \Ad(\Pi))= L(s, (\Ad(\pi)\boxplus  \Sym^4\pi\otimes \omega^{-2}) \times  (\Ad(\pi)\boxplus  \Sym^4\pi\otimes \omega^{-2})).
$$  
Now, \eqref{4Pi} follows from the above-mentioned cuspidality criteria of  Kim and  Shahidi and the theory of Rankin-Selberg $L$-functions.

Similarly, we have
$$
L(s, \Pi_1\times\Pi_1\times \Pi_2\times \Pi_2)=L(s, 1)L(s,\Ad(\Pi_1))L(s,\Ad(\Pi_2))L(s,\Ad(\Pi_1)\times \Ad(\Pi_2)), 
$$
where each $L(s,\Ad(\Pi_i))$ has no pole at $s=1$, and
$$
L(s,\Ad(\Pi_1)\times \Ad(\Pi_2))= L(s, (\Ad(\pi_1)\boxplus  \Sym^4\pi_1\otimes \omega_1^{-2}) \times (\Ad(\pi_2)\boxplus  \Sym^4\pi_2\otimes \omega_2^{-2})) .
$$  
For  non-dihedral $\pi_1$, we have
$$
-\ord_{s=1} L(s, \Ad(\pi_1)  \times    \Sym^4\pi_2\otimes \omega_2^{-2})
\le  \begin{cases} 
 1  & \text{if $\pi_2$ is octahedral;}\\
 0  & \text{otherwise}
 \end{cases}
$$
(here we do not assume that $\pi_1$ is not solvable polyhedral).
Moreover, if $\pi_1$ is   not solvable polyhedral, then 
$$
-\ord_{s=1} L(s, \Sym^4\pi_1\otimes \omega_1^{-2}  \times    \Sym^4\pi_2\otimes \omega_2^{-2})
\le  \begin{cases} 
 0  & \text{if $\pi_2$ is  tetrahedral or  octahedral;}\\
 1  & \text{if $\pi_2$ is not solvable polyhedral.}
 \end{cases}
$$
Putting everything together, we obtain \eqref{2Pii}.
\end{proof}

\subsection{Exactly one of $\pi_1$ and $\pi_2$ is dihedral}\label{sec-5.1} 
  
 Suppose that $\pi_1$   is non-dihedral and  $\pi_2$ is  dihedral. As $\Ad(\pi_1)$ is cuspidal and $\Ad(\pi_2)$  is not cuspidal, $L(s, \Ad(\pi_1)\times \Ad(\pi_1))$ has a simple pole at $s=1$, and  $L(s, \Ad(\pi_1)\times \Ad(\pi_2))$ is holomorphic at $s=1$. Also, by \eqref{Ad-decomp-pi1-P} and \eqref{Ad-decomp-pi2-NP}, we know that $L(s, \Ad(\pi_2)\times \Ad(\pi_2))$  has a pole of order at least two at $s=1$.
  Now, as argued in Section \ref{OX-not-sameK}  (see, especially, \eqref{Ad-ineq}), if   the Ramanujan-Petersson conjecture holds for $\pi_1$, then we have
$$
\lD(\mcS_*)=\lD(\mcS_{\Ad})\ge \frac{1+2}{16} =\frac{3}{16}.
$$

As the Ramanujan-Petersson conjecture holds for solvable polyhedral representations  (see, e.g., \cite[Sec. 6]{Wa14'}), it remains to discuss  the case that  $\pi_1$   is  not solvable polyhedral. To do so, we shall apply the Cauchy-Schwarz inequality as follows. Note that as the Ramanujan-Petersson conjecture is valid for $\pi_2$ (and thus $|\lambda_{ \Ad(\pi_2)}(v) |\le 3$), applying the Cauchy-Schwarz inequality, we have
$$
|\lambda_{ \Ad(\pi_1)}(v) - \lambda_{\Ad(\pi_2)}(v) |^2 \le 2(|\lambda_{ \Ad(\pi_1)}(v) |^2+ |\lambda_{ \Ad(\pi_2)}(v) |^2)\le  2 |\lambda_{ \Ad(\pi_1)}(v) |^2 +18.
$$
Hence, as $\mcS_{\Ad}=\mcS_*$, we have
\begin{equation}\label{Ad-ineq'}
\sum_{v} \frac{|\lambda_{ \Ad(\pi_1)}(v) - \lambda_{\Ad(\pi_2)}(v) |^2 \chi_{\mcS_{\Ad}}(v)}{Nv^s} \le 2  \sum_{v} \frac{|\lambda_{ \Ad(\pi_1)}(v) |^2\chi_{\mcS_{*}}(v) }{Nv^s}+
  18  \sum_{v\in \mathcal{S}_{*}} \frac{1 }{Nv^s}.
\end{equation}
Applying the Cauchy-Schwarz inequality again and using ``positivity'' as in \cite[Eq. (2.2)]{Wa14'}, we can bound the first sum  on the right of \eqref{Ad-ineq'} as
\begin{align*}
  \sum_{v} \frac{|\lambda_{ \Ad(\pi_1)}(v) |^2\chi_{\mcS_{*}}(v) }{Nv^s}
&   \le 
   \Big( \sum_{v} \frac{|\lambda_{ \Ad(\pi_1)}(v) |^4}{Nv^s}  \Big) ^{\frac{1}{2}}\Big( \sum_{v \in \mcS_{*}} \frac{1 }{Nv^s}\Big)^{\frac{1}{2}}  \\
&   \le   
 \Big(\log(  L(s, \Pi_1\times\Pi_1\times\Pi_1\times\Pi_1 ) ) \Big)^{\frac{1}{2}}\Big( \sum_{v \in \mcS_{*}} \frac{1 }{Nv^s}\Big)^{\frac{1}{2}}    ,
\end{align*}
where $\Pi_1 = \Ad(\pi_1) $. Therefore,  arguing similarly as  in Section \ref{OX-not-sameK},  by \eqref{4Pi}, we obtain
$$
3\le 2\cdot 3^{\frac{1}{2}}\lD (\mcS_*)^\frac{1}{2} + 18   \lD(\mcS_{*}).
$$
A numerical calculation then yields
$$
\lD(\mcS_*) \ge \frac{5}{27} - \frac{\sqrt{19}}{54} \ge 0.1044... \ge \frac{1}{9.58}.
$$

 \subsection{Both $\pi_1$ and $\pi_2$ are non-dihedral}\label{sec-5.2}  
 
We start with noting that if both   $\pi_1$ and $\pi_2$ are  non-dihedral, then $\Ad(\pi_1)$ and $\Ad(\pi_2)$ are cuspidal and thus each $L(s, \Ad(\pi_i)\times \Ad(\pi_i))$ has a simple pole at $s=1$. Also, as $\pi_1$ and $\pi_2$ are non-twist-equivalent, $\Ad(\pi_1)\not\simeq\Ad(\pi_2)$, $L(s, \Ad(\pi_1)\times \Ad(\pi_2))$ is holomorphic at $s=1$. As argued previously, assuming the Ramanujan-Petersson conjecture (for both $\pi_1$ and $\pi_2$), we obtain
$$
\lD(\mcS_*)= \lD(\mcS_{\Ad}) \ge \frac{2}{16} = \frac{1}{8}.
$$

In the case that   the Ramanujan-Petersson conjecture is unknown, we shall modify  Walji's  strategy, discussed in Section \ref{strategy-Wa}, to prove:
\begin{theorem}\label{n-sol}
Let $\pi_1$  and $\pi_2$  be  a non-dihedral cuspidal automorphic representation for $\GL_2(\Bbb{A}_F )$ with unitary central characters. Suppose, further, that $\pi_1$ and $\pi_2$ are not twist-equivalent. If $\pi_1$ is not solvable polyhedral, then
$$
\lD(\mcS_*)\ge 
 \begin{cases} 
   \frac{1}{10.17} & \text{if  $\pi_2$ is   tetrahedral;}\\
   \frac{1}{10.76}  & \text{if  $\pi_2$ is octahedral;}\\ 
   \frac{1}{9.9} & \text{if  $\pi_2$ is not solvable polyhedral.}
 \end{cases}
$$  

\end{theorem}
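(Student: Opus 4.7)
The plan is to refine the Cauchy--Schwarz plus positivity argument of Section~\ref{sec-5.1}. Since the crude step $|\lambda_{\Ad(\pi_2)}(v)|\le 3$ used there is no longer available when $\pi_2$ is not solvable polyhedral, it must be replaced by a second Cauchy--Schwarz whose cost is absorbed by the mixed pole bound~\eqref{2Pii}.

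Write $\Pi_i=\Ad(\pi_i)$, $a_v=\lambda_{\Pi_1}(v)$, $b_v=\lambda_{\Pi_2}(v)$, and recall $\mcS_*=\mcS_{\Ad}$. Since $\pi_1,\pi_2$ are non-dihedral and non-twist-equivalent, $\Pi_1,\Pi_2$ are cuspidal, self-dual, and non-isomorphic, so by Rankin--Selberg theory
\[
\sum_v \frac{(a_v-b_v)^2}{Nv^s}\;=\;\log L(s,\Pi_1\times\Pi_1)-2\log L(s,\Pi_1\times\Pi_2)+\log L(s,\Pi_2\times\Pi_2)+O(1)
\]
has pole of order exactly $1-0+1=2$ at $s=1$ (with the $O(1)$ term absorbing prime-power contributions via the Blomer--Brumley bound). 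Since $a_v=b_v$ off $\mcS_*$, one may insert $\chi_{\mcS_*}$ for free and estimate $(a_v-b_v)^2\le(|a_v|+|b_v|)^2=|a_v|^2+2|a_v||b_v|+|b_v|^2$. Applying Cauchy--Schwarz to each of the three resulting Dirichlet series against $\sum_{v\in\mcS_*}1/Nv^s$, together with the standard positivity bound
\[
\sum_v\frac{|a_v|^{2i}|b_v|^{2j}}{Nv^s}\;\le\;\log L\bigl(s,\underbrace{\Pi_1\times\cdots\times\Pi_1}_{2i}\times\underbrace{\Pi_2\times\cdots\times\Pi_2}_{2j}\bigr)+O(1)
\]
(cf.~\cite[Eq.~(2.2)]{Wa14'}), I would divide through by $\log(1/(s-1))$ and take $\liminf_{s\to 1^+}$ to arrive at
\[
2\;\le\;\bigl(\sqrt{A_1}+2\sqrt{B_{12}}+\sqrt{A_2}\bigr)\sqrt{\lD(\mcS_*)},
\]
where $A_i=-\ord_{s=1}L(s,\Pi_i\times\Pi_i\times\Pi_i\times\Pi_i)$ and $B_{12}=-\ord_{s=1}L(s,\Pi_1\times\Pi_1\times\Pi_2\times\Pi_2)$.

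The remainder is purely numerical: Proposition~\ref{key-prop} gives $A_1=3$ (since $\pi_1$ is not solvable polyhedral), $A_2\in\{7,4,3\}$ according as $\pi_2$ is tetrahedral, octahedral, or not solvable polyhedral, and $B_{12}\le 1$ in the tetrahedral case, $B_{12}\le 2$ in the remaining two. Substituting the three triples $(A_1,B_{12},A_2)\in\{(3,1,7),(3,2,4),(3,2,3)\}$ into $\lD(\mcS_*)\ge 4/(\sqrt{A_1}+2\sqrt{B_{12}}+\sqrt{A_2})^2$ yields the stated constants $1/10.17$, $1/10.76$, and $1/9.9$, respectively. The principal difficulty is not this final packaging but the sharpness of the pole-order estimates~\eqref{4Pi} and~\eqref{2Pii}, which rest crucially on Kim's automorphy of $\Sym^4$ and the Kim--Shahidi cuspidality criteria; these have already been prepared in Proposition~\ref{key-prop}.
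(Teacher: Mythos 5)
Your proposal is correct and follows essentially the same route as the paper: the same splitting $|\lambda_{\Pi_1}(v)-\lambda_{\Pi_2}(v)|^2\le|\lambda_{\Pi_1}(v)|^2+2|\lambda_{\Pi_1}(v)\lambda_{\Pi_2}(v)|+|\lambda_{\Pi_2}(v)|^2$, termwise Cauchy--Schwarz against $\sum_{v\in\mcS_*}Nv^{-s}$ with positivity, and the pole orders of Proposition \ref{key-prop}, yielding $\lD(\mcS_*)\ge 4/(\sqrt{A_1}+2\sqrt{B_{12}}+\sqrt{A_2})^2$ with the same triples $(3,1,7)$, $(3,2,4)$, $(3,2,3)$ and hence the stated constants.
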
 

\begin{proof}
We first remark that when at least one of $\pi_1$ and $\pi_2$ is not solvable polyhedral, the information on at least one of $L(s, \Pi_1\times\Pi_1\times \Pi_1\times \Pi_2)$ and $L(s, \Pi_1\times\Pi_2\times \Pi_2\times \Pi_2)$ at $s=1$  seems unavailable at present. Therefore, instead of using an estimate similar to \eqref{C-S-ineq} directly, we shall require a modification.

From the inequality
$$ 
|\lambda_{ \Ad(\pi_1)}(v) - \lambda_{\Ad(\pi_2)}(v)|^2 \le |\lambda_{ \Ad(\pi_1)}(v) |^2 + 2 |\lambda_{ \Ad(\pi_1)}(v) \lambda_{ \Ad(\pi_2)}(v) | +   |\lambda_{ \Ad(\pi_2)}(v) |^2 
$$
and the Cauchy-Schwarz inequality, it follows that
\begin{align*}
 \begin{split}
&\sum_{v} \frac{|\lambda_{ \Ad(\pi_1)}(v) - \lambda_{\Ad(\pi_2)}(v) |^2 \chi_{\mcS_{\Ad}}(v)}{Nv^s} \\
&\le   \Big(  \Big( \sum_{v} \frac{|\lambda_{ \Pi_1}(v) |^4 }{Nv^s} \Big)^{\frac{1}{2}}+ 2 \Big( \sum_{v} \frac{|\lambda_{ \Pi_1 \times \Pi_2}(v) |^2 }{Nv^s} \Big)^{\frac{1}{2}} + \Big( \sum_{v} \frac{|\lambda_{ \Pi_2}(v) |^4 }{Nv^s} \Big)^{\frac{1}{2}} \Big) \Big( \sum_{v \in \mcS_{\Ad}} \frac{1 }{Nv^s}\Big)^{\frac{1}{2}},
 \end{split} 
\end{align*} 
where, as before, $\Pi_i=\Ad(\pi_i)$.  From Proposition \ref{key-prop}, arguing similarly as before, we derive
$$
\lD(\mcS_*)= \lD(\mcS_{\Ad})\ge 
 \begin{cases} 
  \frac{4}{(\sqrt{3}+2 +\sqrt{7})^2}\ge \frac{1}{10.17} & \text{if  $\pi_2$ is   tetrahedral;}\\
  \frac{4}{(\sqrt{3}+ 2\sqrt{2}  +\sqrt{4})^2} \ge\frac{1}{10.76}  & \text{if  $\pi_2$ is octahedral;}\\ 
   \frac{4}{(\sqrt{3}+ 2\sqrt{2}+\sqrt{3})^2} \ge \frac{1}{9.9} & \text{if  $\pi_2$ is not solvable polyhedral.}
 \end{cases}
$$  
Herein, we conclude the proof.
\end{proof}

\subsection{Proof of Theorem  \ref{S''}} 

To prove Theorem \ref{S''}, we consider the set
$$
\mcS_{\Ad}^{-}  =\mcS_{\Ad}^{-} (\pi_1,\pi_2) = \{\text{$v$ unramified for both $\pi_1$ and $\pi_2$} \mid \lambda_{\Ad(\pi_1)}(v) \neq  -\lambda_{\Ad( \pi_2)}(v) \}.
$$ 
and the sum
$$
\sum_{v} \frac{|\lambda_{ \Ad(\pi_1)}(v) + \lambda_{\Ad(\pi_2)}(v) |^2 \chi_{\mcS_{\Ad}^-}(v)}{Nv^s},
$$
where $\chi_{\mcS_{\Ad}^-}$ is the indicator function of $\mcS_{\Ad}^-$. Note that the Ramanujan-Petersson conjecture  gives 
$$
 |\lambda_{ \Ad(\pi_1)}(v) + \lambda_{\Ad(\pi_2)}(v) |^2 \le (3+3)^2\le 36.
$$
Processing a similar argument as in the previous sections (including Section \ref{Dih}) then results in
$$
\lD(\mcS_{\Ad}^-)\ge
 \begin{cases} 
   \frac{1}{18}  & \text{if  $\pi_1$ and $\pi_2$ are simultaneously dihedral or non-dihedral;}\\
   \frac{1}{12}  & \text{if exactly  one of $\pi_1$ and $\pi_2$ is dihedral.}
 \end{cases}
$$  
Finally, observing that
$$
\lambda_{ \Ad(\pi_1)}(v) + \lambda_{\Ad(\pi_2)}(v) = |\lambda_{\pi_1}(v)|^2 + |\lambda_{\pi_2}(v)|^2 -2,
$$
we completes the proof of Theorem \ref{S''}.

\begin{remark}
Although it seems that one would obtain a better bound for $\lD(\mcS^*)$ in the simultaneously non-dihedral case than the simultaneously dihedral case by applying the argument used in 
Section \ref{sec-5.2}, it is not always the case. Indeed, if both $\pi_1$ and $\pi_2$ are tetrahedral, $L(s,\Pi_1\times\Pi_1\times\Pi_2\times\Pi_2)$ may admit a pole of order three at $s=1$, where $\Pi_i=\Ad(\pi_i)$. This results in $\lD(\mcS^*)=\lD(\mcS_{\Ad}^-)\ge\frac{4}{(\sqrt{7}+2\cdot3+\sqrt{7})^2}\approx \frac{1}{31.88}$. Nonetheless, if $\pi_1$ is not solvable polyhedral (regardless of that $\pi_2$ is non-dihedral or not), arguing as in Sections \ref{sec-5.1} and \ref{sec-5.2}, one can obtain $\lD(\mcS^*)=\lD(\mcS_{\Ad}^-)\ge \frac{1}{10.76}$.  
\end{remark}

\begin{remark}
Let $n\ge 3$, and let $\pi_1$ and $\pi_2$ be distinct cuspidal automorphic representations for $\GL_n(\Bbb{A}_F )$, satisfying the Ramanujan-Petersson conjecture, such that $\Ad(\pi_1)$ and $\Ad(\pi_2)$ are cuspidal. Applying the Cauchy-Schwarz inequality twice, we have
\begin{align}\label{lastCS}
  \begin{split}
\sum_{v} \frac{|\lambda_{ \pi_1}(v) - e^{i\alpha} \lambda_{\pi_2}(v) |^2 \chi_{\mcS_{\alpha}}(v)}{Nv^s} 
&\le 2\sum_{i=1}^2  \sum_{v} \frac{|\lambda_{ \pi_i}(v) |^2\chi_{\mcS_{\alpha}}(v) }{Nv^s}\\
&\le 2\sum_{i=1}^2 \Big( \sum_{v} \frac{|\lambda_{ \pi_i}(v) |^4}{Nv^s}  \Big) ^{\frac{1}{2}}\Big( \sum_{v \in \mcS_{\alpha}} \frac{1 }{Nv^s}\Big)^{\frac{1}{2}},
 \end{split}
\end{align}
where $\mathcal{S}_\alpha= \mathcal{S}_\alpha(\pi_1,\pi_2) =\{\text{$v$ unramified for both $\pi_1$ and $\pi_2$} \mid \lambda_{\pi_1}(v)\neq e^{i\alpha} \lambda_{\pi_2}(v) \}.$
Since
$$
L(s, \pi_i\times \bar{\pi}_i\times \pi_i\times \bar{\pi}_i)=L(s, 1)L(s,\Ad(\pi_i))^2 L(s,\Ad(\pi_i)\times \Ad(\pi_i)), 
$$
$L(s, \pi_i\times \bar{\pi}_i\times \pi_i\times \bar{\pi}_i)$ has a pole of order two at $s=1$.  Hence, we have
$$
\lD(\mcS_\alpha)\ge \Big(\frac{2}{4\sqrt{2}}\Big)^2 = \frac{1}{8}.
$$

Furthermore, without the assumption that $\Ad(\pi_1)$ and $\Ad(\pi_2)$ are automorphic, as each $\Ad(\pi_i)$ 
satisfies the Ramanujan-Petersson conjecture, we know that $|\lambda_{ \pi_i}(v) |^2 \le n^2$. Thus, applying the first inequality in \eqref{lastCS}, we obtain
$$
\lD(\mcS_\alpha)\ge \frac{2}{4n^2}= \frac{1}{2n^2}.
$$

\end{remark}

\section{Proofs of Theorems  \ref{main-uncond}, \ref{main-2}, and \ref{main-GRH}}\label{main-proof}

In this section, we will prove Theorems  \ref{main-uncond}, \ref{main-2}, and \ref{main-GRH}. For the sake of convenience, we  shall assume that any prime $v$ in the consideration is unramified for both $\pi_1$ and $\pi_2$. (Note the argument presented in Section \ref{gen} does not make the assumption that both $\pi_1$ and $\pi_2$ are non-dihedral.)

\subsection{Generality}\label{gen}
For each $i$, we write
$$
\lambda_{\pi_i}(v)=2\cos  \theta_{i,v}
$$
for some $\theta_{i,v}\in [0,\pi]$. It is clear that
$$
\lambda_{\pi_1}(v)=\lambda_{\pi_2}(v)\text{ if and only if }   \theta_{1,v}= \theta_{2,v}
$$
and that
$$
\lambda_{\pi_1}(v)=-\lambda_{\pi_2}(v)\text{ if and only if }   \theta_{1,v}= \pi-\theta_{2,v}.
$$
Thus, $\#\{v \mid  Nv\le x,\enspace \lambda_{\pi_1}(v)^2= \lambda_{\pi_2}(v)^2 \}$ is less than or equal to
\begin{equation}\label{start-decomp}
\#\{v \mid  Nv\le x,\enspace  \theta_{1,v}=\theta_{2,v}  \} +\#\{v \mid  Nv\le x,\enspace \theta_{1,v}=\pi-\theta_{2,v}  \}. 
\end{equation} 

In  the notation of Section \ref{jSTd-Sp},  for any   $M\ge 1$ and $\delta\in (0,\frac{1}{\pi(M+1)}]$, we first observe that
\begin{equation}\label{prime-counting}
\#\{v \mid  Nv\le x,\enspace \theta_{1,v}=\theta_{2,v}  \} \le  \sum_{Nv\le x} \Big( \mathcal{I}_{\delta}(\theta_{1,v}-\theta_{2,v}) +\mathcal{I}_{\delta}(\theta_{1,v}+\theta_{2,v}) \Big) .
\end{equation}
From Proposition \ref{Sel-poly} and the identity
$$
\cos(n(\theta_{1,v}-\theta_{2,v} ))+\cos(n(\theta_{1,v} +\theta_{2,v})) =2 \cos(n\theta_{1,v})\cos(n\theta_{2,v}),
$$
it follows that the sum on the right of \eqref{prime-counting} is less than or equal to 
\begin{equation}\label{1st-decomp}
\Big(2\delta+ \frac{2}{M+1}\Big)\pi_F(x)+  4\sum_{n=1}^M \Re(\hat{S}^+_{J,M}(n) )  \sum_{Nv\le x}\cos(n\theta_{1,v})\cos(n\theta_{2,v}),
\end{equation}
where $\pi_F(x)$ denotes the number of primes $v$ of $F$ such that $Nv\le x$. Since  $\delta\in (0,\frac{1}{\pi(M+1)}]$,  $|\Re(\hat{S}^+_{J,M}(n) )| \le \delta+ \frac{1}{M+1}\le \frac{2}{M}$, and thus
\begin{equation}\label{n=1,2}
4 \sum_{n=1}^2  \Re(\hat{S}^+_{J,M}(n) ) \sum_{Nv\le x} \cos(n\theta_{1,v})\cos(n\theta_{2,v})\ll  \frac{1}{M} \pi_F(x).
\end{equation}
Also, recalling that for $n\ge 2$,
$$
2 \cos (n\theta) =\frac{\sin ((n+1)\theta)}{\sin\theta} -\frac{\sin((n-1)\theta)}{\sin\theta}= U_n(\cos\theta) -U_{n-2}(\cos\theta),
$$
we see that the remaining terms\footnote{We note that although  \cite{MP17}  includes the case $n=2$ in their consideration, one has to treat it separately since \cite[Proposition 2.1]{MP17} (cf. Propositions \ref{uncond-ST}) does not cover the situation that  $U_{2-2}(\cos(\theta_{1,v}))U_{2-2}(\cos(\theta_{2,v}))\equiv 1.$} in the double sum of \eqref{1st-decomp} become
\begin{equation}\label{n>3}
\ll \frac{1}{M} \sum_{n=3}^M  \Big|  \sum_{Nv\le x} \Big( U_n(\cos(\theta_{1,v})) -U_{n-2}(\cos(\theta_{1,v}))\Big) 
\Big( U_n(\cos(\theta_{2,v})) -U_{n-2}(\cos(\theta_{2,v}))\Big)  \Big|.
\end{equation}
Finally, gathering \eqref{prime-counting}, \eqref{1st-decomp}, \eqref{n=1,2}, and \eqref{n>3}, we see that $\#\{v \mid  Nv\le x,\enspace \theta_{1,v}=\theta_{2,v}  \}$
\begin{align}\label{"+"}
 \ll \frac{\pi_F(x)}{ M}  + \frac{1}{M} \sum_{n=3}^M   \Big|  \sum_{Nv\le x} \prod_{i=1}^2\Big( U_n(\cos(\theta_{i,v})) -U_{n-2}(\cos(\theta_{i,v}))\Big) 
  \Big|.
\end{align}

To bound  $\#\{v \mid  Nv\le x,\enspace  \theta_{1,v}=\pi-\theta_{2,v}  \} $, we use the estimate
$$
\#\{v \mid  Nv\le x,\enspace \theta_{1,v}=\pi-\theta_{2,v}  \} \le \sum_{Nv\le x} \Big( \mathcal{I}_{\delta}(\theta_{1,v}-(\pi -\theta_{2,v} ))+\mathcal{I}_{\delta}(\theta_{1,v} + (\pi -\theta_{2,v} )) \Big).
$$
As $\cos (n (\theta_{1,v}-(\pi -\theta_{2,v} ) ) )+ \cos (n( \theta_{1,v} + (\pi -\theta_{2,v} )))$ equals to
\begin{align*}
 2 \cos(n \theta_{1,v} )\cos  (n (\pi -\theta_{2,v} ))=  2 (-1)^n\cos(n \theta_{1,v} )\cos  (n \theta_{2,v} ),
\end{align*}
by Proposition \ref{Sel-poly}, we see that $\#\{v \mid  Nv\le x,\enspace \theta_{1,v}=\pi-\theta_{2,v}  \}$ is less than or equal to
$$
\Big(2\delta+ \frac{2}{M+1}\Big)\pi_F(x)+  4\sum_{n=1}^M \Re(\hat{S}^+_{J,M}(n) ) (-1)^n \sum_{Nv\le x}\cos(n\theta_{1,v})\cos(n\theta_{2,v}). 
$$
By an analogous argument as above, this becomes
\begin{align}\label{"-"}
 \ll \frac{\pi_F(x)}{ M}  + \frac{1}{M} \sum_{n=3}^M   \Big|  \sum_{Nv\le x} \prod_{i=1}^2\Big( U_n(\cos(\theta_{i,v})) -U_{n-2}(\cos(\theta_{i,v}))\Big) 
  \Big|.
\end{align}

\subsection{Proof of Theorem  \ref{main-uncond}}\label{uncond-proof}
Now we are in  a position to prove Theorem  \ref{main-uncond}. Assume that both $\pi_1$ and $\pi_2$ are non-dihedral and that
\begin{equation}\label{assump}
\limsup_{x\rightarrow\infty}\frac{\#\{v \mid  Nv\le x,\enspace \lambda_{\pi_1}(v)^2 = \lambda_{\pi_2}(v)^2 \}}{\pi_F(x)}>0.
\end{equation}
Suppose, on the contrary, that $\pi_1$ and  $\pi_2$ are not twist-equivalent. From  Proposition \ref{uncond-ST} and estimates \eqref{"+"} and \eqref{"-"}, it follows that   the limit on the left of \eqref{assump} is less than or equal to
\begin{align*}
 \limsup_{x\rightarrow\infty}\frac{\#\{v \mid  Nv\le x,\enspace  \theta_{1,v}=\theta_{2,v}  \}}{\pi_F(x)}+\limsup_{x\rightarrow\infty}\frac{\#\{v \mid  Nv\le x,\enspace  \theta_{1,v}=\pi-\theta_{2,v}  \}}{\pi_F(x)} \ll \frac{1}{M} 
\end{align*}
for any $M\ge 1$. Making $M\rightarrow \infty$ yields that
$$
\limsup_{x\rightarrow\infty}\frac{\#\{v \mid  Nv\le x,\enspace \lambda_{\pi_1}(v)^2 = \lambda_{\pi_2}(v)^2 \}}{\pi_F(x)}=0
$$
and thus $0<0$, a contradiction.

\subsection{Completing the proof of Theorems   \ref{main-2}  and \ref{main-GRH}} 
Let $F=\Bbb{Q}$. For each $i$, let $\pi_i$ be the cuspidal automorphic representation  corresponding to a  non-CM  newform  in $S_{k_i}^{\mathrm{new}}(\Gamma_0(q_i))$  with trivial nebentypus.  Assume that $\pi_1$ and $\pi_2$ are not twist-equivalent. Applying  Proposition \ref{Th2} (and the recent work of Newton and Thorne \cite{NT20} on the automorphy of symmetric powers of $\pi_i$), we have
\begin{align}\label{2nd-term}
 \begin{split}
\frac{1}{M} \sum_{n=3}^M  & \Big|  \sum_{p\le x} \prod_{i=1}^2\Big( U_n(\cos(\theta_{i,p})) -U_{n-2}(\cos(\theta_{i,p}))\Big)\Big| \ll \pi(x)\exp\Big(\frac{-c_2 \log x}{(k_1q_1k_2q_2  M)^{ c_3 M^2}}\Big)\\ 
&+ M^4 \pi(x)\Big( \exp\Big(\frac{- \log x}{c_4 M^2} \Big)
+ \exp\Big(\frac{-c_5\log x}{ M^2\log (k_1q_1k_2q_2 M) } \Big)+\exp\Big(\frac{ -c_5 \sqrt{\log x}  }{ M} \Big)    \Big).
  \end{split}
\end{align}
Thus, using \eqref{start-decomp}, \eqref{"+"}, \eqref{"-"}, and \eqref{2nd-term}  and choosing 
$$
 M = \ceil{ c_{6}  \sqrt{\log \log x}/ \log(k_1q_1k_2q_2 \log \log x) } \ge 3, 
$$
for some sufficiently small $c_6>0$, we arrive at
$$
\#\{p\le x \mid  \lambda_{\pi_1}(p)^2= \lambda_{\pi_2}(p)^2 \} \ll \pi(x)\frac{ \log(k_1q_1k_2q_2 \log \log x) }{\sqrt{\log\log x}}.
$$

Moreover, assuming the generalised Riemann hypothesis, by Proposition \ref{PJW} and  estimates \eqref{start-decomp}, \eqref{"+"}, and \eqref{"-"}, we have 
$$
\#\{p\le x \mid  \lambda_{\pi_1}(p)^2= \lambda_{\pi_2}(p)^2 \}
  \ll \frac{1}{M}\frac{x}{\log x} + M^2 x^{1/2}  \log( (k_1q_1k_2q_2 M) x).
$$
Choosing 
$$
M = \ceil{ x^{1/6}/(\log x)^{1/3} (\log( (k_1q_1k_2q_2 ) x))^{1/3} }, 
$$
we derive
$$
\#\{p\le x \mid  \lambda_{\pi_1}(p)^2= \lambda_{\pi_2}(p)^2 \}  \ll \frac{x^{5/6}(\log(  k_1q_1k_2q_2   x))^{1/3}}{(\log x)^{2/3}}.
$$

\section{Remarks on non-CM condition in Theorems \ref{main-uncond}, \ref{main-TE-2}, \ref{main-2}, and \ref{main-GRH}}\label{con-rmk}

We shall note that in Theorems \ref{main-uncond}, \ref{main-TE-2}, \ref{main-2}, and \ref{main-GRH},  we assume   $\pi_1$ and $\pi_2$  correspond to non-CM newforms $f_1$ and $f_2$, respectively, just for the sake of simplicity of discussion. As done in  \cite{MP17,R98,Rama00}, it is possible to drop the assumption that $f_2$ is without CM. (We note that as remarked in \cite{MP17} if both $f_1$ and $f_2$ are with CM, then the theorems  are not always true.) 

To extend Theorem \ref{main-uncond}, we require the following estimate.

\begin{proposition}\label{uncond-ST'}
Let $F$ be a totally real number field.  For each $i$, let $\pi_i$ be a cuspidal automorphic representation  corresponding to a  Hilbert newform $f_i$  of weights $k_{i,j}\ge 2$ (at all infinite primes $v_j$ of $F$) and with trivial nebentypus. Let $m_i\ge 1$. Assume that $f_1$ is a non-CM newform such that $\Sym^{m_1}\pi_1$ is automorphic, and suppose that $f_2$ is with CM. Then one has
\begin{align*}
\sum_{Nv\le x} U_{m_1}(\cos\theta_{1,v})U_{m_2}(\cos\theta_{2,v})  =  o(\pi_F(x) ).
\end{align*}
\end{proposition}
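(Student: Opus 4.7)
The plan is to realise the sum as a partial sum of Dirichlet coefficients of the Rankin-Selberg $L$-function $L(s,\Sym^{m_1}\pi_1\times\Sym^{m_2}\pi_2)$, to verify that this $L$-function has no pole at $s=1$ (this is where the CM/non-CM dichotomy enters), and then to extract the estimate via a Wiener-Ikehara-type Tauberian argument analogous to the one used by Jacquet-Shalika \cite{JS76}.

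The first step is routine. Because both $\pi_1$ and $\pi_2$ have trivial nebentypus, the Satake parameters of $\pi_i$ at an unramified prime $v$ are $\{e^{i\theta_{i,v}},e^{-i\theta_{i,v}}\}$, and the classical identity
$$U_{m_i}(\cos\theta_{i,v})=\sum_{j=0}^{m_i}e^{i(m_i-2j)\theta_{i,v}}=\lambda_{\Sym^{m_i}\pi_i}(v)$$
identifies the summand with $\lambda_{\Sym^{m_1}\pi_1}(v)\lambda_{\Sym^{m_2}\pi_2}(v)$, which is the $v$-th Dirichlet coefficient of $\log L(s,\Sym^{m_1}\pi_1\times\Sym^{m_2}\pi_2)$ up to contributions from higher prime powers that are negligible by the Ramanujan bound of Blasius \cite{Bl06}.

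The heart of the proof is the second step. Writing the CM form as $\pi_2\simeq I_K^F(\psi)$ for a Hecke character $\psi$ of a quadratic extension $K/F$, a direct Clebsch-Gordan computation on Satake parameters yields an isobaric decomposition
$$\Sym^{m_2}\pi_2\simeq\sigma_1\boxplus\cdots\boxplus\sigma_r,$$
in which every cuspidal constituent $\sigma_j$ is either a Hecke character of $F$ (when the corresponding character of $K$ is $\Gal(K/F)$-invariant) or a dihedral cuspidal $I_K^F(\eta_j)$ of $\GL_2(\Bbb{A}_F)$ induced from the same extension $K$. Factoring
$$L(s,\Sym^{m_1}\pi_1\times\Sym^{m_2}\pi_2)=\prod_j L(s,\Sym^{m_1}\pi_1\times\sigma_j),$$
the theory of Rankin-Selberg convolutions says that each factor can have a pole at $s=1$ only if some cuspidal isobaric constituent $\tau_i$ of $\Sym^{m_1}\pi_1$ is isomorphic to $\overline{\sigma_j}\otimes|\cdot|^{it}$ for some $t\in\Bbb{R}$. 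When $\Sym^{m_1}\pi_1$ is cuspidal of degree $m_1+1\ge 3$, such an isomorphism is excluded by comparing degrees with $\deg\sigma_j\le 2$; when $m_1=1$ we have $\tau_1=\pi_1$, which is neither dihedral nor a Hecke character by the non-CM hypothesis; and in the remaining solvable polyhedral cases, where $\Sym^{m_1}\pi_1$ may fail to be cuspidal, the explicit decompositions of Kim-Shahidi \cite{KS02} present the cuspidal constituents as abelian twists of $\pi_1$ or of $\Ad(\pi_1)$, and any such twist coinciding with a dihedral $\sigma_j$ induced from $K$ would force $\pi_1$ itself to be CM with respect to $K$, contradicting the non-CM hypothesis on $f_1$. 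With no pole at $s=1$ and Shahidi's non-vanishing \cite{Sh81} on $\Re(s)=1$ in hand, a Wiener-Ikehara-type Tauberian argument applied to the logarithmic derivative of $L(s,\Sym^{m_1}\pi_1\times\Sym^{m_2}\pi_2)$, combined with partial summation, yields $\sum_{Nv\le x}\lambda_{\Sym^{m_1}\pi_1}(v)\lambda_{\Sym^{m_2}\pi_2}(v)=o(\pi_F(x))$. The main obstacle is the matching analysis in the solvable polyhedral case, where the non-CM hypothesis on $f_1$ must be combined carefully with the explicit Kim-Shahidi decompositions to rule out accidental isomorphisms between the cuspidal pieces of $\Sym^{m_1}\pi_1$ and those of $\Sym^{m_2}\pi_2$.
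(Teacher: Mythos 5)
Your argument is essentially the paper's own proof: decompose $\Sym^{m_2}\pi_2$ of the CM form into an isobaric sum of Hecke characters of $F$ and dihedral representations induced from the CM quadratic extension, factor $L(s,\Sym^{m_1}\pi_1\times\Sym^{m_2}\pi_2)$ accordingly, exclude a pole on $\Re(s)=1$ via the Rankin--Selberg pole criterion together with a degree comparison and the non-CM (hence non-dihedral) hypothesis on $\pi_1$, invoke Shahidi's non-vanishing, and conclude by the Wiener--Ikehara Tauberian theorem. Your extra discussion of the solvable polyhedral cases goes beyond what the paper writes (the paper applies the pole criterion to $\Sym^{m_1}\pi_1$ as a whole) and is slightly imprecise there --- for octahedral $\pi_1$ the Kim--Shahidi decomposition of $\Sym^4\pi_1$ contains a genuinely dihedral cuspidal constituent, not only abelian twists of $\pi_1$ or $\Ad(\pi_1)$, and no explicit decomposition is quoted for $m_1\ge 5$ --- but this is immaterial since non-CM Hilbert newforms of weight at least $2$ are never tetrahedral or octahedral.
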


We note that this proposition was stated in \cite[Proposition 2.1]{MP17} (for $F=\Bbb{Q}$) without assuming the automorphy of $\Sym^{m_1}\pi_1$. However, as may be noticed, the argument in \cite[Sec. 4]{MP17} only works for the case that both $f_1$ and $f_2$ are without CM. Indeed, as argued in \cite[Sec. 4]{MP17} (see also \cite[pp. 716-719, especially, Theorems 2.4 and 2.5]{Harris14} and \cite[Proof of Theorem 1.1]{PJW19}),  to use the ``Brauer-Taylor induction'' (together with the automorphy theorem from \cite{BLGHT11} and the theory of Rankin-Selberg $L$-functions), one would require the \emph{cuspidality} of  both $\Sym^{m_1}\pi_1$ and $\Sym^{m_2}\pi_2$ after a suitable base change, which may not hold if $f_2$ is with CM (see the proof below).
 
We also note that the proof of Hecke's theorem on the distribution of the Frobenius angles of CM modular forms does not rely on the symmetric power $L$-functions but the equidistribution of the values of Hecke characters (see, e.g., \cite[Theorem 3.1.1 and Sec. 3.3]{AIW}). 

Since we did not find a reference with the precise proof for the proposition (although it seems to be known by experts, at least, implicitly), we include a proof in this section for the sake of completeness.

\begin{proof}[Proof of Proposition \ref{uncond-ST'}]
As argued in \cite{MP17}, by the Wiener-Ikehara Tauberian theorem (see, e.g., \cite{RM05}), to prove the proposition, it suffices to show that the $L$-function
$$
L(s, \Sym^{m_1} \pi_1 \times \Sym^{m_2}\pi_2 )
$$
extends to a non-vanishing holomorphic function on $\Re(s)\ge 1$.  As remarked in \cite[pp. 243  and 251]{Rama07}, if $\pi_2$ corresponds to a CM Hilbert newform $f_2$ (and so $\pi_2$ is dihedral), there exists an imaginary quadratic extension $K$ of $F$ such that
$$
\Sym^{m_2} \pi_2 \simeq\boxplus_j  \Pi_j,
$$
where $\Pi_j$ is either  an id\`ele class character of $F$ or a two-dimensional cuspidal automorphic representation induced from a (non-trivial) character of $K$. 
Thus, we have the factorisation
$$
L(s, \Sym^{m_1} \pi_1 \times \Sym^{m_2}\pi_2 )=\prod_j L(s, \Sym^{m_1} \pi_1 \times \Pi_j).
$$

By the theory of Rankin-Selberg $L$-functions, each $L(s, \Sym^{m_1} \pi_1 \times \Pi_j)$ extends holomorphically to $\Re(s)\ge 1$ except for a possible simple pole at $s=1-it$ that exists only if $\Sym^{m_1} \pi_1 \simeq \bar{ \Pi}_j  \otimes |\cdot|^{it}$. By a dimension consideration, if $L(s, \Sym^{m_1} \pi_1 \times \Pi_j)$ admits a pole at $s=1-it$, then $m_1 =1$ and $\dim \Pi_j=2$ and thus $\pi_1 \simeq \bar{ \Pi}_j \otimes |\cdot|^{it}$. However, it is impossible as $f_1$  is without CM (so $\pi_1$ is non-dihedral), but $\Pi_j$ is induced from a character (so it is dihedral). Finally, it follows from the work of Shahidi \cite{Sh81}, each $L(s, \Sym^{m_1} \pi_1 \times \Pi_j)$ is non-vanishing on $\Re(s)\ge 1$, which concludes the proof. 
\end{proof} 
 
%It is desirable to prove the proposition without assuming the automorphy of $\Sym^{m_1}\pi_1$  via the ``Brauer-Taylor induction''. However, it seems difficult to do so since it would require the existence of base change of $\GL(2)$-forms (appearing in the isobaric sum  of $\Sym^{m_2}\pi_2$), which is not available in general at present.

Now, using Proposition \ref{uncond-ST'} in the place of Proposition \ref{uncond-ST} in the proof of Theorem  \ref{main-uncond} given in Section \ref{uncond-proof}, we have the following:

 \begin{theorem}\label{main-3}
Let $F$ be a totally real number field.  For each $i$, let $\pi_i$ be a cuspidal automorphic representation  corresponding to a  Hilbert newform $f_i$  of weights $k_{i,j}\ge 2$ and with trivial nebentypus. Assume that $f_1$ is without CM  and that all the symmetric powers $\Sym^{m_1}\pi_1$ are automorphic. If
$$
\limsup_{x\rightarrow\infty}\frac{\#\{v \mid  Nv\le x,\enspace \lambda_{\pi_1}(v)^2= \lambda_{\pi_2}(v)^2 \}}{\pi(x)}>0,
$$
then $\pi_1$ and $\pi_2$ are twist-equivalent. 
\end{theorem}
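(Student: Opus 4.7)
The plan is to run the proof of Theorem \ref{main-uncond} from Section \ref{uncond-proof} essentially verbatim, substituting Proposition \ref{uncond-ST'} for Proposition \ref{uncond-ST} whenever the Hilbert newform $f_2$ happens to be CM. Argue by contradiction: suppose that $\pi_1$ and $\pi_2$ are not twist-equivalent. Writing $\lambda_{\pi_i}(v)=2\cos\theta_{i,v}$ (the Ramanujan--Petersson bound holds for $\pi_1$ by the assumed automorphy of $\Sym^{m_1}\pi_1$ via Blasius, and for $\pi_2$ automatically in the CM case and by Blasius in the non-CM case), the same decomposition \eqref{start-decomp} gives
\[
\#\{v\mid Nv\le x,\ \lambda_{\pi_1}(v)^2=\lambda_{\pi_2}(v)^2\}\le \#\{\theta_{1,v}=\theta_{2,v}\}+\#\{\theta_{1,v}=\pi-\theta_{2,v}\}.
\]

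Next, apply the Selberg polynomial bounds \eqref{"+"} and \eqref{"-"} to each summand. These bounds are purely analytic: they only use positivity of the majorant $\mathcal{I}_\delta$ and Chebyshev expansions, so they do not depend on whether any form is CM. For any integer $M\ge 3$, each of the two counting functions is
\[
\ll \frac{\pi_F(x)}{M}+\frac{1}{M}\sum_{n=3}^{M}\Big|\sum_{Nv\le x}\prod_{i=1}^{2}\bigl(U_n(\cos\theta_{i,v})-U_{n-2}(\cos\theta_{i,v})\bigr)\Big|.
\]
Now apply the joint Sato--Tate distribution to each Chebyshev bilinear form. If $f_2$ is non-CM, this is precisely Proposition \ref{uncond-ST}; if $f_2$ has CM, this is Proposition \ref{uncond-ST'}, which is where the assumed automorphy of $\Sym^{m_1}\pi_1$ is used. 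In either case, every inner character sum is $o(\pi_F(x))$ as $x\to\infty$, so dividing by $\pi_F(x)$ and taking $\limsup_{x\to\infty}$ yields
\[
\limsup_{x\to\infty}\frac{\#\{v\mid Nv\le x,\ \lambda_{\pi_1}(v)^2=\lambda_{\pi_2}(v)^2\}}{\pi_F(x)}\ll \frac{1}{M}
\]
for every $M\ge 3$. Letting $M\to\infty$ forces the left-hand side to be $0$, contradicting the hypothesis.

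The main obstacle sits inside Proposition \ref{uncond-ST'}, i.e.\ the joint equidistribution when $f_2$ is CM. The non-CM version relies on cuspidality of $\Sym^{m_i}\pi_i$ after a suitable solvable base change (Brauer--Taylor induction), which fails here because a CM form has reducible symmetric-power lifts. Instead, one decomposes $\Sym^{m_2}\pi_2\simeq\boxplus_j \Pi_j$ where each $\Pi_j$ is either an idèle class character of $F$ or a two-dimensional cuspidal representation induced from a character of the imaginary quadratic CM field $K$, then factors
\[
L(s,\Sym^{m_1}\pi_1\times \Sym^{m_2}\pi_2)=\prod_j L(s,\Sym^{m_1}\pi_1\times \Pi_j),
\]
and checks that no factor has a pole on $\Re(s)=1$: by Rankin--Selberg theory a pole would force $\Sym^{m_1}\pi_1$ to be twist-equivalent to some $\bar\Pi_j$, forcing $m_1=1$ and $\pi_1$ to be dihedral, contradicting the non-CM hypothesis on $f_1$. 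Combined with Shahidi's non-vanishing result, the Wiener--Ikehara theorem then delivers the required $o(\pi_F(x))$ estimate.
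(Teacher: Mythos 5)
Your proposal is correct and follows essentially the same route as the paper: the paper proves Theorem \ref{main-3} exactly by re-running the Selberg-polynomial argument of Section \ref{uncond-proof} with Proposition \ref{uncond-ST'} substituted for Proposition \ref{uncond-ST} when $f_2$ has CM, and your sketch of Proposition \ref{uncond-ST'} (isobaric decomposition of $\Sym^{m_2}\pi_2$, Rankin--Selberg pole analysis forcing $m_1=1$ and a dihedral contradiction, Shahidi's non-vanishing, Wiener--Ikehara) matches the paper's proof. One cosmetic remark: the Ramanujan--Petersson bound for $\pi_1$ is due to Blasius unconditionally for Hilbert newforms of weights $\ge 2$ and does not need the assumed automorphy of $\Sym^{m_1}\pi_1$, which is used only in Proposition \ref{uncond-ST'}.
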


In closing this section, we remark that it is possible to prove an effective version of Proposition \ref{uncond-ST'} by adapting the methods used in \cite{Th20,PJW19} and thus obtain a version of Theorems \ref{main-2} and \ref{main-GRH}, without assuming that $\pi_2$ is without CM, by using the argument developed in Section \ref{main-proof}.

\section*{Acknowledgments}

The author would like to thank Professor Amir Akbary and Wen-Ching Winnie Li for making several helpful suggestions. He is also grateful to the referee for careful reading and valuable comments and suggestions.

\end{document}